\newtheorem{teo}{Theorem}[section]
\newtheorem{prop}[teo]{Proposition}
\newtheorem{lemma}[teo] {Lemma}
\newtheorem{example}[teo]{Example}
\newtheorem{remark}[teo]{Remark}
\newtheorem{definition}[teo]{Definition}
\let\Im\relax
\DeclareMathOperator{\Hom}{Hom}
\DeclareMathOperator{\Im}{Im}
\DeclareMathOperator{\Min}{min}
\DeclareMathOperator{\Max}{max}
\DeclareMathOperator{\Sub}{Sub}
\DeclareMathOperator{\Ext}{Ext}
\DeclareMathOperator{\Ker}{Ker}
\DeclareMathOperator{\HH}{HH}
\DeclareMathOperator{\id}{id}
\DeclareMathOperator{\Id}{Id}
\renewcommand{\k} {\mathbb k}
\newcommand{\R}{\mathcal R}
\renewcommand{\P}{\mathcal P}
\newcommand{\Ap}{\mathbb {Ap}}
\renewcommand{\Bar}{\mathbb {Bar}}
\begin{document}

\title{Comparison morphisms between two projective resolutions of monomial algebras}
\date{}
\author{Mar\'\i a Julia Redondo and Lucrecia Rom\'an \footnote{Instituto de Matem\'atica, Departamento de Matem\'atica, 
Universidad Nacional del Sur (UNS)-CONICET, Bah\'\i a Blanca -  Argentina.
{\it E-mail address:  mredondo@criba.edu.ar, lroman@uns.edu.ar}}
\thanks{The first author is a researcher and the second author has a fellowship at CONICET, Argentina. This work has been supported by the project PICT-2011-1510.  }}

\maketitle

\begin{abstract}
We construct comparison morphisms between two well-known projective resolutions of a monomial algebra $A$: the bar resolution $\Bar A$ and  Bardzell's resolution $\Ap A$; the first one is used to define the cup product and the Lie bracket on the Hochschild cohomology $\HH^*(A)$ and the second one has been shown to be an efficient tool for computations of these cohomology groups. The constructed comparison morphisms allow us to show that the cup product restricted to  even degrees of the Hochschild cohomology has a very simple description. Moreover, for $A=\k Q/I$ a monomial algebra such that $\dim_\k e_i A e_j = 1$ whenever there exists an arrow $\alpha: i \to j \in Q_1$, we describe the Lie action of the Lie algebra $\HH^1(A)$  on $\HH^{\ast}(A)$. 
\end{abstract}

\noindent 2010 MSC: 16E40 16W99

\section{Introduction}

Hochschild cohomology of associative algebras was introduced by G. Hochschild in~\cite{Ho} and since then it has been studied by many authors in different areas of mathematics. In the case where A is an algebra over a ring  $\k$ such that $A$ is $\k$-projective, Cartan and Eilenberg give  a useful interpretation of  the Hochschild cohomology groups ${\HH}^n(A,A)$ with coefficients in $A$. They prove that these groups can be identified  with the groups $\Ext_{A^e}^n(A,A)$  and thus be calculated using  arbitrary projective resolutions of $A$ over its enveloping algebra $A^e$,  see~\cite{CE}.  Despite this freedom of choice to calculate cohomology groups,  it is not the case with some of the structures defined in cohomology: the sum ${\HH}^*(A) = \oplus_{n \geq 0} {\HH}^n(A)$ is a Gerstenhaber algebra, that is, it is a graded
commutative ring via the cup product $\cup: \HH^n(A) \times \HH^m(A) \to \HH^{n+m}(A)$, a graded Lie algebra via the bracket $[ -  ,  - ]: \HH^n(A) \times \HH^m(A) \to \HH^{n+m-1}(A)$, and these
two structures are related, see~\cite{G}.  One wants to understand the structure of  ${\HH}^*(A)$ as a ring and as graded Lie algebra but the problem is that these structures are defined in terms of the bar resolution $\Bar A$ of $A$, where historically the cohomology groups were defined, and the computations of these groups are made, in general, using a convenient projective resolution. 

Important improvements have been made when considering the cup product: it has another description, the Yoneda product, which can be transported easily to the complex obtained by using any other projective resolution.  This has been used to described the ring structure of $\HH^*(A)$ for many algebras $A$ such as radical square zero algebras~\cite{C3}, truncated quiver algebras~\cite{ACT}, Koszul algebras~\cite{BGSS} and so on.  

On the other hand, so far, the bracket has no similar description; for this reason there are only a few classes of algebras in which the bracket has been determined.  The question about finding a way to compute the Gerstenhaber bracket on Hochschild cohomology in terms of arbitrary projective resolutions, was raised by  Gerstenhaber and Schack in~\cite{GS}*{p. 256}.  Ten years later,  Schwede gave in~\cite{SS} a beautiful interpretation of the bracket in terms of bimodule self-extensions of $A$, based on Retakh's description of extension categories.  Recently, Negron and Witherspoon succeeded in finding, under some conditions, a definition of   Gerstenhaber's graded Lie bracket on complexes other than the bar complex (see~\cite{NW}) and  Su\'arez-Alvarez  gave a way to compute the Lie brackets restricted to $\HH^1(A) \times  \HH^{\ast}(A)$ in terms of  arbitrary projective resolutions of $A$, see~\cite{MSA}. 

In this paper we concentrate on the  particular case of monomial algebras, that is, algebras $A=\k Q/I$ where  $I$ can be chosen as generated by paths. For this family of algebras one has a detailed description of a minimal resolution of the $A^e$-module $A$ given by Bardzell in~\cite{B};  we will denote it by $\Ap A$ and recall it  in Section 2. Although  the construction of this resolution has hard combinatorial calculations,  it has been shown to be an efficient
tool for many computations because it can be described directly from paths in the quiver $Q$. Then it is natural to try to define both structures, the cup product and the Lie bracket, using this resolution.
 In this article we construct explicit comparison morphisms  
 $$ \xymatrix{  
{\Bar A}  \ar@<.9ex>[rr]^{\quad {\mathbf G}}&  &    \ar@<.9ex>[ll]^{\quad {\mathbf F}}  { \Ap A}
} $$  between the two resolutions involved. Thus using the quasi-isomorphisms between the derived complexes we can transport   structures on the complex $\Hom_{A^e}({ \Bar A},A)$ to the complex $\Hom_{A^e}({ \Ap A},A)$ and  as an application we are able to define the Gerstenhaber algebra structures  on the complex obtained using  Bardzell's minimal resolution.

When restricting to truncated quiver algebras, a subclass of monomial algebras, our comparison morphisms $\mathbf F$ and $\mathbf G$ restrict to those defined in \cite{ACT}.

The paper is organized as follows. In Section 2 we introduce all the necessary terminology,  we recall the two projective resolutions and the  definition of a Gerstenhaber algebra. In Section 3 we define the maps ${\mathbf F}: \Ap A \to \Bar A$ and ${\mathbf G}: \Bar A \to \Ap A$, and we establish the main result of this work: for a monomial algebra $A$, $\mathbf F$ and $\mathbf G$ are comparison morphisms.
We also present some  properties of these morphisms for the subsequent proof in Sections 4 and 5. The last section provides a small
example to show our  technique together with two more general applications: in the first one, we study the Lie action of the Lie algebra $\HH^1(A)$ of outer derivations of $A$ on $\HH^{\ast}(A)$, when $A$ is a monomial algebra that satisfies the following property: $\dim_\k e_i A
e_j = 1$ if there exists $\alpha: i \to j \in Q_1$. In the second one, we show that the cup product restricted to even degrees of the
Hochschild cohomology has a very simple description.

\section{Preliminaries}

\subsection{Quivers, relations and monomial algebras}

We briefly recall some concepts concerning quivers and monomial algebras; for unexplained notions we refer the reader, for instance, to \cite{ASS}.

A finite quiver $Q$ is a finite set of vertices $Q_0$, a finite set of arrows $Q_1$, and two maps  $s, t : Q_1 \to Q_0$ associating to each arrow $\alpha$ its source  $s(\alpha)$ and its target $t(\alpha)$.  A path $w$ of length $l$ is a sequence of $l$ arrows $\alpha_1 \dots \alpha_l$ such that $t(\alpha_i)=s(\alpha_{i+1})$. We denote by $\vert w \vert $ the length of the path $w$. We put $s(w)=s(\alpha_1)$ and $t(w)=t(\alpha_l)$. For any vertex $x$ we consider $e_x$ the trivial path of length zero and we put $s(e_x)=t(e_x)=x$. 

We say that a path $w$ divides a path $u$ if $u = L(w) w R(w)$, where $L(w)$ and $R(w)$ are not simultaneously paths of length zero.

The path algebra $\k Q$ is the $\k$-vector space with basis the set of paths in $Q$; the product on the basis elements is given by the concatenation of the sequences of arrows of the paths $w$ and $w'$ if they form a path (namely, if $t(w)=s(w')$) and zero otherwise.  Vertices form a complete set of orthogonal idempotents of $\k Q$. Let $F$ be the two-sided ideal of $\k Q$ generated by the arrows of $Q$. A two-sided ideal $I$ of $\k Q$ is said to be \textit{admissible} if there exists an integer $m \geq 2$ such that $F^m \subseteq I \subseteq F^2$. The elements in $I$ are called \textit{relations}, and $\k Q/I$ is called a \textit{monomial algebra} if the ideal $I$ is generated by paths.

By a fundamental result in representation theory  it  is well known that if $A$ is  an associative,  basic, indecomposable, finite dimensional algebra over an algebraically closed field $\k$,   there exists a finite quiver $Q$ such that $A$ is Morita equivalent to $\k Q/I$, where $\k Q$ is the path algebra of $Q$ and $I$ is an admissible two-sided ideal of $\k Q$.

 From now on we will assume that $A = \k Q/I$  is a monomial algebra. We also  assume that the ideal $I$ is generated by paths of minimal length, and we fix a minimal set $\R$ of paths, of minimal length, that generate the ideal $I$.  Moreover, we denote by $\P$ the set of paths in $Q$ such that the set $\{ \gamma + I, \gamma \in \P\} $ is a basis of $A=\k Q/I$. It is clear that $Q_0 \cup Q_1 \subseteq \P$ since $I \subseteq F^2$.

\subsection{The standard bar resolution $\Bar A$}

The bar resolution $\Bar A=(B_n, b_{n+1})_{n \geq 0}$ is the following resolution of $A$ by $A^e$-modules, where $A^e=A \otimes A^{op}$ is the enveloping algebra and $\otimes = \otimes_\k$.  To begin with, $B_n=A^{\otimes^{(n+2)}}$ is the $(n+2)$-fold tensor product of $A$ with itself over $\k$.  The  $A^e$-linear map $b_{n+1}: B_{n+1} \to B_{n}$,
$$b_{n+1}(\alpha_0\otimes \alpha_1\otimes \dots \otimes\alpha_{n+1}\otimes \alpha_
{n+2})  =  \sum_{i=0}^{n+1} (-1)^i \alpha_0\otimes
\alpha_1\otimes \dots \otimes
\alpha_i\alpha_{i+1}\otimes \dots \otimes\alpha_{n+1}\otimes \alpha_{n+2},$$
turns $\Bar A$ into a complex, which is acyclic in all degrees except in degree $0$, wherein its homology is isomorphic to $A$. The multiplication map $\varepsilon: A \otimes A \to A$ given by $\varepsilon (a\otimes b)=ab$  provides an augmentation $\Bar A \to A \to 0$.

For a path algebra $A = \k Q/I$, we can consider $E=\k Q_0$ the subalgebra of $A$ generated by the set of vertices $Q_0$, and in this case the standard bar resolution can be redefined using tensor products over $E$. We also denote this resolution by  $\Bar A$.  

It is a very well-known fact that the Hochschild cohomology $\HH^\ast (A)$ is isomorphic to $H^\ast (\Hom_{A^e}(\Bar A, A))$.

\subsection{Bardzell's resolution $\Ap A$}

Bardzell's resolution $\Ap A= (A \otimes \k AP_n \otimes A, d_{n+1})_{n \geq 0}$ is a minimal resolution that was introduced by Bardzell in \cite{B} for monomial algebras.

Given a monomial algebra $A=\k Q/I$ with $\R$ a minimal set of paths, of minimal length, that generate the ideal $I$,  let $AP_0=Q_0$, $AP_1=Q_1$ and for $n\geq 2$ let  $AP_n$ be the set of supports of  {\it $n$-concatenations} which are  defined inductively as follows: given any directed path $T$ in $Q$, consider the set of vertices that are starting and ending points of arrows belonging to $T$, and consider the natural order $<$ in this set.  Let $\R(T)$ be the set of paths in $\R$ that are contained in the directed path $T$. Take $p_1 \in \R(T)$ and consider the set $$L_1=\{ p \in \R(T): s(p_1)  < s(p) < t(p_1)  \}.$$
If $L_1 \not = \emptyset$, let $p_2$ be such that $s(p_2)$ is minimal with respect to all $p \in L_1$. Now assume that $p_1, p_2, \dots, p_j$ have been constructed.  Let
$$L_{j+1}=\{ p \in \R(T): t(p_{j-1}) \leq s(p) < t(p_j) \}.$$
If $L_{j+1} \not = \emptyset$, let $p_{j+1}$ be such that $s(p_{j+1})$ is minimal with respect to all $p \in L_{j+1}$. Thus $(p_1, \dots, p_{n-1})$ is an $n$-concatenation and we denote by $w(p_1, \dots, p_{n-1})$ the {\it support} of the concatenation, that is, the path from $s(p_1)$ to $t(p_{n-1})$ along the directed path $T$.

These concatenations can be pictured as follows:
\[ \xymatrix{ \ar@<1ex>[rr]^{p_1} & \ar[rrr]_{p_2} &  & \ar@<1ex>[rrr]^{p_3} & & \ar[rrr]_{p_4} & & \ar@<1ex>[rr]^{p_5} & & & \dots  } \]
For any $w \in AP_n$ define $\Sub(w)= \{ w' \in AP_{n-1}: w' \ \mbox{divides} \ w\}$. 

\medskip

We can dualize the construction of the sets $AP_{n}$: given $q_1 \in \R(T)$ consider the set 
$$L_1^{op} =\{ q \in \R(T): s(q_1)  < t(q) < t(q_1)  \}.$$
If $L_1^{op} \not = \emptyset$, let $q_2$ be such that $t(q_2)$ is maximal with respect to all $q \in L_1^{op}$. Now assume that $q_1, q_2, \dots, q_j$ have been constructed.  Let
$$L_{j+1}^{op}=\{ q \in \R(T): s(q_{j}) < t(q) \leq s(q_{j-1}) \}.$$
If $L_{j+1}^{op} \not = \emptyset$, let $q_{j+1}$ be such that $t(q_{j+1})$ is maximal with respect to all $q \in L_{j+1}^{op}$.
Thus $(q_{n-1} , \dots, q_1)$ is an {\it $n$-op-concatenation}, we denote by $w^{op}(q_{n-1}, \dots, q_1)$ the support of the concatenation, that is,  the path from $s(q_{n-1})$ to $t(q_1)$ along the directed path $T$, and  $AP^{op}_{n}$ is the set of supports of $n$-op-concatenations. Moreover, we denote $w^{op}(q_{n-1}, \dots, q_1)=w^{op}(q^1, \dots, q^{n-1})$. It is shown in \cite{B}*{Lemma 3.1} that $AP_{n} = AP^{op}_{n}$.

\medskip

Now we are ready to describe Bardzell's resolution $\Ap A= (A \otimes \k AP_n \otimes A, d_{n+1})_{n \geq 0}$. To begin with, 
$\k X$ is the vector space generated by the set $X$ and all tensor products are taken over $E=\k Q_0$, the subalgebra of $A$ generated by the vertices.  In order to define the $A^e$-linear maps $d_n: A \otimes \k AP_{n} \otimes A \to A \otimes  \k AP_{n-1} \otimes A$ we need the following notations: if $n \geq 2$, for any $w \in AP_{n}$ and $\psi \in \Sub(w)$ we denote $w= L(\psi) \psi R(\psi)$. In particular, if $n=2m+1$, then 
 $\Sub(w) = \{ \psi_1, \psi_2 \}$ and $w= \psi_ 1 R(\psi_1) = L(\psi_2) \psi_2$, see \cite{B}*{Lemma 3.3}. Then
\begin{align*}
d_1( 1 \otimes \alpha \otimes 1) & =   \alpha \otimes e_{t(\alpha)} \otimes 1 - 1 \otimes e_{s(\alpha)} \otimes \alpha, \\
d_{2m} (1 \otimes w \otimes 1) & =  \sum_{\psi \in \Sub(w)} L(\psi) \otimes \psi \otimes R(\psi), \\
d_{2m+1} (1 \otimes w \otimes 1) & =   L(\psi_2) \otimes \psi_2 \otimes 1 -  1 \otimes \psi_1 \otimes R(\psi_1) .
\end{align*}
The multiplication map $\mu: A \otimes \k Q_0 \otimes A \to A$ given by $\mu( 1 \otimes e_i \otimes 1 )  =  e_i$, provides an augmentation $\Ap A \to A \to 0$.

We are interested in algebras that are projective over $\k$, and in this case it is well known that the Hochschild cohomology $\HH^\ast (A)$ is isomorphic to $\Ext^\ast_{A^e}(A,A) = H^\ast(\Hom_{A^e}(\Ap A, A))$.

\subsection{The Gerstenhaber algebra $\HH^{\ast}(A)$}\label{productos}

In \cite{G} Gerstenhaber introduced  two structures on the Hochschild cohomology $\HH^{\ast}(A)$, namely the cup product $\cup$  and  the bracket $[-,-]$. They  are defined using explicit formulas in terms of cochains in the complex $\Hom_{A^e}({\Bar A}, A)$ as follows: given  $f \in \Hom_{A^e}(
A^{\otimes^{n+2}} , A)$ and $g \in \Hom_{A^e}( A^{\otimes^{m+2}}  , A)$ we have
$f \cup g \in \Hom_{A^e}(  A^{\otimes^{m+n+2}}  , A)$  defined by
$$  f \cup g (1\otimes  v_1 \otimes \dots \otimes v_{n+m} \otimes 1 ) = f( 1 \otimes v_1 \otimes \dots \otimes v_{n} \otimes 1  )g(1 \otimes v_{n+1} \otimes \dots \otimes v_{n+m} \otimes 1  ) $$
and $ [f,g] \in \Hom_{A^e}(A^{\otimes^{m+n+1}} , A)$ defined by
$$ [f,g ]= f \circ g - (-1)^{(n-1)(m-1)} g \circ f
$$where
\begin{equation*} f \circ g = \sum_{i=1}^n(-1)^{(i-1)(m-1)}f \circ_i g 
\end{equation*} and
\begin{align*}   f & \circ_i g  (1 \otimes v_1 \otimes \dots \otimes v_{n+m-1} \otimes 1   ) \\ & =  f (1 \otimes v_1 \otimes
\dots   \otimes v_{i-1} \otimes   g( 1 \otimes v_i \otimes \dots v_{i+m-1} \otimes 1 ) \otimes v_{i+m} \otimes
\dots v_{n+m-1} \otimes 1 ).\end{align*}
These   products  induce well defined products on Hochschild cohomology
\begin{align*} \cup & :  \HH^n(A) \times \HH^m(A) \to \HH^{n+m}(A) \\
[ - , - ] & :  \HH^n(A) \times \HH^m(A) \to \HH^{n+m-1}(A)  \end{align*}
in such a way that $(\HH^{\ast}(A), \cup, [ - , - ])$ becomes a Gerstenhaber algebra, that is, 
$(\HH^*(A), \cup)$  is a graded
commutative ring, $(\HH^*(A), [-,-])$ is a graded Lie algebra, and the bracket is compatible with the cup product since it acts through graded derivations, see~\cite{G}. 

\section{The comparison morphisms}

A comparison morphism between two projective resolutions of an algebra $A$ is a morphism of chain complexes that lifts the identity map on $A$. The existence of such morphisms is clear,  see for example \cite{CE}.  However, an explicit construction of these morphisms is not  always easy. In the next two subsections we will define maps $${\mathbf F}: \Ap A \to \Bar A \quad  \mbox{and}  \quad {\mathbf G}: \Bar A \to \Ap A$$ that allow us to obtain  the main result of this article, that is, for a monomial algebra $A$ the maps ${\mathbf F}$ and  ${\mathbf G}$ are comparison morphisms.

We will start with the definition of the $A^e$-linear maps:
\[ \xymatrix{
A \otimes_E \k AP_n \otimes_E A   \ar@<.9ex>[rr]^{\qquad F_n}& & \ar@<.9ex>[ll]^{\qquad G_n}
A^{\otimes_E ^{n+2}} }\] 
for $n \geq 0$ and then we will show  the commutativity of the diagrams   
   \[ \xymatrix{
 A \otimes \k AP_{n} \otimes  A  \ar^{F_n}[d] \ar^{d_n}[r] &  A \otimes \k AP_{n-1} \otimes  A \ar^{F_{n-1}}[d]  &   A^{\otimes^{n+2}}   \ar^{G_n}[d] \ar^{b_n}[r] &   A^{\otimes^{n+1}}  \ar^{G_{n-1}}[d]  \\
   A^{\otimes^{n+2}}    \ar^{b_n}[r] &   A^{\otimes^{n+1}},  &   A \otimes \k AP_{n} \otimes  A \ar^{d_n}[r] &  A \otimes \k  AP_{n-1} \otimes A .} \]
This proof is not immediate, and Sections 4 and 5 are devoted exclusively to it. 

\subsection{The map ${\mathbf F}: \Ap A \to \Bar A$}\label{F}

We define the $A^e$-linear maps $F_n : A \otimes \k AP_n \otimes A \longrightarrow  A^{\otimes ^{n+2}}$ as follows:
\begin{align*}  F_0(1
\otimes e \otimes 1) & =  e \otimes 1, \\
F_1(1\otimes \alpha \otimes1) & =  1\otimes \alpha\otimes1, \\
F_n(1\otimes w\otimes 1) &  =   \sum_{i= 1}^{m-1} 1\otimes
L_{i+1}F_{n-1}(1\otimes \zeta_{i+1} \otimes 1)R_{i+1},
\mbox{ \quad if $n \geq 2$}
\end{align*}
where  $\Sub(w) = \{ \zeta_1, \dots, \zeta_m \} \subset
AP_{n-1}$ is an ordered set such that  if  $i <j$ then  $s(\zeta_i) <
s(\zeta_j) $ with respect to the order given in the support of $w$, and
$L_{i}, R_{i}$  are the paths defined by
 $$w = L_{i}\zeta_{i}R_{i} \qquad \mbox{for $i= 1 , \dots, m$. }$$ 
 
 \begin{remark}\label{obs1}
\begin{itemize} \item[ ]
 \item[1)]
If $n = 2$  and     $w = \alpha_1 \dots \alpha_s \in AP_2$ ,   $\alpha_i \in Q_1$,
then $\Sub(w) = \{ \alpha_1 ,  \dots ,\alpha_s \}$. Thus
\begin{align*}  F_2(1 \otimes w\otimes 1) &  =    \sum_{i=1}^{s-1} 1\otimes
\alpha_1 \dots \alpha_iF_1(1\otimes \alpha_{i+1} \otimes
1)\alpha_{i+2} \dots \alpha_s \\
&  = 
\sum_{i=1}^{s-1} 1\otimes \alpha_1 \dots\alpha_i\otimes \alpha_{i+1}
\otimes \alpha_{i+2} \dots \alpha_s. \end{align*}
\item[2)] If   $\Sub(w) =
\{\zeta_1,\zeta_2\}$ for $w \in AP_n $  with  $w = L(\zeta_2)\zeta_2$, we have that 
$$F_n(1\otimes w\otimes 1) = 1\otimes L(\zeta_2)F_{n-1}(1\otimes \zeta_2
\otimes 1).$$ In particular this is true if  $n$ is odd, see \cite{B}*{Lemma 3.3}.  
\item[3)] If $c \in A$ and $w \in AP_n$ then   $$b_{n+1}(1 \otimes cF_{n}(1
\otimes w \otimes 1)) = cF_n(1 \otimes w \otimes 1) - 1 \otimes
cb_nF_n(1 \otimes w \otimes 1),$$ since $b_{n+1}$ is linear and
\[ \begin{aligned} b_{n+1}(1 \otimes  c(a_0 \otimes \dots \otimes  a_{n+1} ))  & =  c(a_0 \otimes \dots \otimes  a_{n+1} ) - 1\otimes b_n(c(a_0 \otimes \dots \otimes  a_{n+1} )) \\
 & =  c(a_0 \otimes \dots \otimes  a_{n+1} ) - 1\otimes c b_n(a_0 \otimes \dots \otimes  a_{n+1} ).  \end{aligned} \]
\end{itemize}
\end{remark}

\subsection{The map ${\mathbf G}: \Bar A \to \Ap A$}\label{G}

Since the sought morphism $\mathbf{G} $ is  a  morphism  of $A$-bimodules, we only have to define it on basis elements  \[1\otimes v_1\otimes
\dots \otimes v_{n}\otimes1 , \mbox{ $v_i \in \P$, $t(v_i) =
s(v_{i+1})$}
\] of  $ A^{\otimes n+2} $. For this, we will  need to distinguish certain $n$-sequences $(v_1, \dots, v_n)$ in  $\P^n$.

\begin{definition}\label{mimp} An $n$-sequence $(v_1, \dots, v_n) \in \P^n$ is called  \textbf {well-concatenated} if  $t(v_i) = s(v_{i+1})$
for $i =1 , \dots,  n-1$. For any  well-concatenated $n$-sequence 
$(v_1, \dots, v_n)$ we define the sets
\begin{align*} \mathcal{M}_{odd}  =  \mathcal{M}_{odd}(v_1, \dots, v_n) & =  \{j: \  v_{2j-1}.v_{2j} \not \in I \},\\
 \mathcal{M}_{even}  = \mathcal{M}_{even}(v_1, \dots, v_n) & =  \{ j: \ v_{2j}.v_{2j+1} \not 
\in I \}. \end{align*} The  well-concatenated $n$-sequence  $(v_1,
\dots, v_n)$ is called \textbf{good} if  $n$ is even and  $\mathcal{M}_{odd} =
\emptyset$ or if $n$ is odd 
 and   $\mathcal{M}_{even} = \emptyset$. Otherwise, the  well-concatenated $n$-sequence  $(v_1, \dots, v_n)$  is called \textbf{bad}.
\end{definition}

For any  good $n$-sequence $(v_1, \dots, v_n)$, we consider the subset of  $AP_n$: \[\chi(v_1, \dots , v_n) = \{w \in
AP_n :  v_1\dots v_n = L(w) w R(w) \}.\] Now the
$A^e$-linear maps
$G_n :  A^{\otimes ^{n+2}} \longrightarrow A \otimes \k AP_n \otimes A$ are given by
\begin{align*}
G_0 (1 \otimes 1) & =   1 \otimes 1 \otimes 1 = \sum_{i \in Q_0} 1 \otimes e_i \otimes 1, \\
G_1 (1 \otimes v \otimes 1) & =  
\begin{cases}
\sum_{i=1}^s\alpha_1 \dots \alpha_{i-1}\otimes \alpha_i\otimes \alpha_{i+1} \dots \alpha_s , &  \hbox{if  $v= \alpha_1 \dots\alpha_s$,} \\
0,  &   \hbox{if $\vert v \vert =  0$}.
\end{cases}
\end{align*}
If   $1\otimes v_1\otimes \dots \otimes v_{n}\otimes1$ is a basis element in  $ A^{\otimes n+2}$ and the  $n$-sequence $(
v_1, \dots,  v_{n})$  is bad or  $\chi( v_1, \dots, v_{n})
= \emptyset$, then
\[G_n(1\otimes v_1\otimes \dots \otimes v_{n}\otimes1)
= 0.\] Otherwise,  if $\chi( v_1, \dots,  v_{n}) \neq \emptyset$,
\[G_{n} (1\otimes v_1\otimes \dots \otimes v_{n}\otimes1)   =
  L(w_1)\otimes w_1 \otimes R(w_1), \quad   \mbox{ if $n$ is even,} \] where  $w_1$ is such that  $s(w_1)= $ min $\{s(w) : w \in \chi( v_1, \dots,  v_{n}) \} $
and 
\[G_{n} (1\otimes v_1\otimes \dots \otimes v_{n}\otimes1)   =
\sum_{\substack{ w \in \chi( v_1, \dots,  v_{n}) \\ \\  s(v_1) \leq
s(w) < t(v_1) } } L(w)\otimes w \otimes R(w), \quad  \mbox{ if $n$
is odd}. \]

\section{The map $\mathbf F$ is a comparison morphism}\label{morfismoF}

First we establish some preliminary results about the sets $AP_n$ that will be used in the forthcoming proof. In the first  lemmas we describe right and left  divisors of paths of the form $aw$ and $wb$ respectively, for $w$ the support of a concatenation and  $a,b \in \P$.

\begin{lemma}
\label{A2} Let   $w=w^{op}(q^1, \dots, q^{2n-1}) \in AP_{2n}$.
\begin{itemize}
\item [(i)] If  $v=v(p_1, \dots, p_{2n-2}) \in AP_{2n-1}$ is such that   $ a  w  =  v  b   $ with  $a, b$ paths in $Q$ and  $a \in \mathcal{P}$  then $t(p_{2n-2}) \leq s(q^{2n-1})$, and therefore $b \in I$.
\item [(ii)] If $u=u(p_1, \dots, p_{2n-1}) \in AP_{2n}$ is such that  $a  w  =u  b   $ with $a, b \in \mathcal{P}$ then there  exists $z \in AP_{2n+1}$ such that   $z$ divides the path $ub$ and  $s(z)=s(u)$.
\end{itemize}
\end{lemma}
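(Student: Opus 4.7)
My plan is to prove (i) by contradiction and (ii) by explicit construction, in both parts driven by tracking the positions of the relations $p_i$ (from the concatenation of $v$ or $u$) and $q^j$ (from the op-concatenation of $w$) along the common path $aw = vb = ub$.

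For (i), I would suppose for contradiction that $t(p_{2n-2}) > s(q^{2n-1})$, so that the support of $v$ overlaps the interior of $w$. Since $a \in \P$ cannot contain any element of $\R$ as a subpath, the first relation $p_1$, which begins at $s(v) = s(a)$, must extend past $t(a) = s(w)$ and so sits partly inside $w$. I would then walk left-to-right along the common path, matching the chain $p_1, p_2, \dots$ (built by minimality of $s(p_{j+1})$ at each step) against the op-chain $q^1, q^2, \dots$ (built by maximality of $t(q^{j+1})$ at each step). Because both sequences are rigidly determined by these extremality conditions and by the ambient path, a nontrivial overlap would force them to synchronize on that overlap; but the first tuple has $2n-2$ entries and the second $2n-1$, so synchronization requires an extra relation on one side, contradicting the minimality/maximality step that selected the last relation available. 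Once $t(p_{2n-2}) \leq s(q^{2n-1})$ is established, the conclusion $b \in I$ is immediate: the op-relation $q^{2n-1} \in \R$ then starts at or after $s(b) = t(v)$, ends inside $w \subseteq vb$, and therefore is a subpath of $b$.

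For (ii), I would construct $z$ by adjoining one additional relation $p_{2n}$ to the concatenation of $u$, producing $z = z(p_1, \dots, p_{2n}) \in AP_{2n+1}$. Since both $a$ and $b$ belong to $\P$, the same $p_1 \not\subseteq a$ argument forces $u$ to extend past $s(w)$, and the dual argument at the right end forces $b$ to be short enough that some op-relation of $w$ starts in the half-open interval $[t(p_{2n-2}), t(p_{2n-1}))$ required by the inductive construction of concatenations. Taking $p_{2n}$ to be the relation of minimal $s$-coordinate among such candidates produces a valid $(2n+1)$-concatenation whose support $z$ divides $ub$ and satisfies $s(z) = s(p_1) = s(u)$, as required.

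The main obstacle in both parts is the careful combinatorial bookkeeping needed to interleave the two extremality principles---minimal $s$ for concatenations versus maximal $t$ for op-concatenations---along a shared path. Although $AP_n = AP_n^{op}$ as sets, the tuples of relations supplied by the two constructions are, a priori, genuinely different, and one has to verify step by step that the two chains are in fact forced to match up where $v$ (or $u$) and $w$ overlap; this is exactly the check that drives both the contradiction in (i) and the existence statement in (ii), and it is also the reason this result is isolated as a preliminary lemma rather than absorbed into the main proof.
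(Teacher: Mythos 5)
Your part (ii) follows the paper's construction: adjoin a relation $p_{2n}$ of minimal source taken from the set $\{q \in \R(aw) : t(p_{2n-2}) \leq s(q) < t(p_{2n-1})\}$, whose nonemptiness is witnessed by $q^{2n-1}$ --- the upper bound $s(q^{2n-1}) < t(p_{2n-1})$ coming from $b \in \P$, and the lower bound $t(p_{2n-2}) \leq s(q^{2n-1})$ coming from part (i) applied to the truncated concatenation $v(p_1,\dots,p_{2n-2})$. The problem is that your proof of (i) is not a proof, and since (ii) hinges on exactly the inequality asserted in (i), the gap propagates. The claim that a nontrivial overlap forces the two chains to ``synchronize'' is false in general: none of the relations $p_i$ need coincide with any $q^j$, and the ensuing counting argument ($2n-2$ versus $2n-1$ entries, hence ``an extra relation on one side'') does not extract a contradiction from the assumption $t(p_{2n-2}) > s(q^{2n-1})$. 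Indeed your sketch never actually uses that assumption.

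What the lemma really requires --- and what the paper proves directly, not by contradiction --- is a staggered system of inequalities propagated by induction on $j$: namely $s(q^{2j-1}) < t(p_{2j-1})$, $s(p_{2j}) \leq s(q^{2j-1})$ and $t(p_{2j}) \leq t(q^{2j-1})$ for $j = 1, \dots, n-1$. The base case is your correct observation that $a \in \P$ forces $s(p_1) < s(q^1) < t(p_1)$, whence the minimality of $s(p_2)$ gives $s(p_2) \leq s(q^1)$ and $t(p_2) \leq t(q^1)$. The inductive step alternates the two extremality principles in a precise way: the construction of $v$ gives $t(p_{2j-1}) \leq s(p_{2j+1}) < t(p_{2j})$, which combined with the inductive inequalities puts $s(p_{2j+1})$ strictly between $s(q^{2j-1})$ and $t(q^{2j-1})$, so $t(q^{2j-1}) < t(p_{2j+1})$; the maximality of $t(q^{2j-1})$ in $L^{op}_{2j-1}$ then excludes $p_{2j+1}$ from that set and yields $s(q^{2j+1}) < t(p_{2j+1})$, after which the minimality of $s(p_{2j+2})$ gives $s(p_{2j+2}) \leq s(q^{2j+1})$ and $t(p_{2j+2}) \leq t(q^{2j+1})$. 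The chain of inequalities $t(p_{2n-2}) \leq t(q^{2n-3}) \leq s(q^{2n-1})$ then gives the conclusion, and $b \in I$ follows as you say because $q^{2n-1}$ ends at $t(w)=t(b)$. This quantitative offset between the two chains is the entire content of (i), and it is exactly the piece your ``walk and match'' argument does not supply.
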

\begin{proof}
 (i) We use an inductive procedure to show that
\begin{align}
s(q^{2j-1}) < t (p_{2j-1} ) \label{x1}\\
s(p_{2j}) \leq s(q^{2j-1}) \label{x2} \\
t(p_{2j}) \leq t(q^{2j-1} )\label{x3}
\end{align}
for all $j= 1, \dots, n-1$. It is clear that the second inequality implies the third because  $p_{2j}, q^{2j-1}$ are  minimal relations. 
The hypothesis    $a \in \P$ implies that
\begin{equation}
s(p_1) < s(q^{1}) < t(p_{1}). \label{hipa}
\end{equation}
Since  $p_{2}$ has been chosen in the set
 $ L_1=  \{  \gamma \in \R(v) :  s(p_1) < s(\gamma) <
 t(p_1) 
\} $ with  $s(p_{2})$  minimal with respect to all 
 $\gamma \in L_1$, from \eqref{hipa} it follows that
$s(p_{2}) \leq s(q^{1})$  and therefore, $t(p_{2}) \leq t(q^{1})$.

By induction hypothesis we assume that the inequalities  \eqref{x1}, \eqref{x2} and  \eqref{x3} are satisfied. By
construction of  $v$ it follows that $t(p_{2j-1}) \leq s(p_{2j+1}) <
t(p_{2j})$ and using the inequalities of the inductive hypothesis we obtain that
$s(q^{2j-1}) < s(p_{2j+1}) < t(q^{2j-1})$. These last inequalities imply that \begin{equation} t(q^{2j-1})
< t(p_{2j+1}).\label{hipot3}\end{equation}  
Since   $q^{2j-1}$  has been chosen in the set
$ L_{2j-1}^{op}= \{ \gamma \in \R(w) : s(q^{2j}) < t(\gamma) \leq
 s(q^{2j+1}) 
\} $  with $t(q^{2j-1})$  maximum with respect to all   $\gamma \in  L_{2j-1}^{op}$,  \eqref{hipot3} implies that
 $p_{2j+1} \not \in  L_{2j-1}^{op}$, and since
$s(q^{2j}) < t(q^{2j-1}) < t(p_{2j+1})$ we have that
 \begin{equation} s(q^{2j+1}) < t(p_{2j+1}).\label{hipa4} \end{equation}
Therefore, from \eqref{x3},  \eqref{hipa4}  and by construction of
$w^{op}$ we have that
$$t(p_{2j}) \leq t(q^{2j-1}) \leq s(q^{2j+1}) <
t(p_{2j+1}).$$  Since $p_{2j+2}$ has been chosen
in the set  $ L_{2j+2} = \{ \gamma \in   \R(v) :  t(p_{2j}) \leq s(\gamma) < t(p_{2j+1}) 
\} $ with $s(p_{2j+2})$ minimal with respect to all
 $\gamma \in L_{2j+2}$, we can conclude that
 \begin{equation} s(p_{2j+2}) \leq s(q^{2j+1}) \quad
\mbox{and therefore}  \quad t(p_{2j+2}) \leq t(q^{2j+1}).
\end{equation}
In particular, we have that  $t(p_{2n-2}) \leq t(q^{2n-3}) \leq
s(q^{2n-1})$.

\medskip

(ii) To prove the existence of  $z$ we have to show that there exists    $p_{2n} \in \R$ such that  $z=z(p_1, \dots, p_{2n-1},p_{2n} )$ belongs to  $AP_{2n+1}$, that is,
we must verify that the set
$$\{ q \in \R(a w) : t(p_{2n-2}) \leq
s(q) < t(p_{2n-1}) \}$$ is not empty. Since $b \in \P$ we have that
$s(q^{2n-1})  < t(p_{2n-1})$, and from   (i) we know that $t(p_{2n-2})
\leq s(q^{2n-1})$. Therefore  $q^{2n-1}  \in \{ q \in \R(a  w) :
t(p_{2n-2}) \leq s(q) < t(p_{2n-1}) \}.$
\end{proof}
 \begin{lemma} \label{A}  Let
$w=w(p_1, \dots, p_{n-1}) \in AP_{2n}$.
\begin{itemize}
\item [(i)] If  $v=v^{op}(q^2, \dots, q^{2n-1}) \in AP_{2n-1}$ is such that  $w  b = a v  $ with $a, b$ paths in   $Q$ and $b \in \P$ then  $t(p_1) \leq
s(q^2)$, and therefore  $a \in I$.
\item [(ii)] If $u=u^{op}(q^1, \dots, q^{2n-1}) \in AP_{2n}$ is such that  $w b = a u $ with  $a, b$ paths in   $\P$ then there  exists $z \in AP_{2n+1}$ such that   $z$ divides the path   $au$ and $t(z)=t(u)$.
\end{itemize}
\end{lemma}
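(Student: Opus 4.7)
The plan is to prove this lemma by mirroring the proof of Lemma \ref{A2} just given: swapping the roles of forward concatenations and op-concatenations (equivalently, passing to $A^{op}$) turns the hypotheses and conclusion of \ref{A2}(i)--(ii) into those of the present lemma, so the argument proceeds in parallel.

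For part (i), I proceed by downward induction on $j$, from $j=n-1$ down to $j=1$, to establish three inequalities dual to those of \ref{A2}(i):
\[
(\hat x_1)\ s(q^{2j+1}) < t(p_{2j+1}), \qquad
(\hat x_2)\ t(p_{2j+1}) \leq t(q^{2j}), \qquad
(\hat x_3)\ s(p_{2j+1}) \leq s(q^{2j}).
\]
The base case $j=n-1$ uses the hypothesis $b\in\P$: the path $b$ goes from $t(p_{2n-1})$ to $t(q^{2n-1})$ and cannot contain the relation $q^{2n-1}$, forcing $s(q^{2n-1})<t(p_{2n-1})$; this places $p_{2n-1}$ in the set $L^{op}_1=\{q:s(q^{2n-1})<t(q)<t(q^{2n-1})\}$ governing the op-construction of $q^{2n-2}$, and maximality of $t(q^{2n-2})$ combined with minimality of relations in $\R$ yield $(\hat x_2)$ and $(\hat x_3)$. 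For the inductive step, combining $(\hat x_3)$ at level $j$ with the forward construction of $w$ gives $t(p_{2j-1})\le s(p_{2j+1})\le s(q^{2j})$; the set used in the op-construction of $q^{2j-1}$ gives $t(q^{2j-1})\le s(q^{2j+1})$, which together with $(\hat x_1)$ at level $j$ yields $t(q^{2j-1})<t(p_{2j+1})$, and minimality of $p_{2j+1}\in\R$ then forces $s(q^{2j-1})<s(p_{2j+1})$. Minimality of $s(p_{2j+1})$ in the forward-construction set $L_{2j+1}$, together with the inequality just derived, forces $q^{2j-1}\notin L_{2j+1}$, from which $s(q^{2j-1})<t(p_{2j-1})$ follows (the alternative $s(q^{2j-1})\geq t(p_{2j})$ would contradict $s(q^{2j-1})<s(p_{2j+1})<t(p_{2j})$). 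This is $(\hat x_1)$ at level $j-1$, and $(\hat x_2), (\hat x_3)$ at $j-1$ then follow as in the base case. The lemma follows: from $(\hat x_3)$ at $j=1$ and the forward construction of $w$, $t(p_1)\le s(p_3)\le s(q^2)$, so $a$ contains $p_1$ as a subpath, proving $a\in I$.

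Part (ii) reduces to part (i). Let $v=v^{op}(q^2,\ldots,q^{2n-1})\in AP_{2n-1}$ be the sub-op-concatenation of $u$ obtained by dropping the leftmost relation $q^1$, and write $u=a'v$ where $a'$ is the initial segment of $u$ from $s(u)=s(q^1)$ to $s(v)=s(q^2)$. Then $wb=(aa')v$ with $b\in\P$, so part (i) applies (it only needs the right-hand factor to be in $\P$) and yields $t(p_1)\le s(q^2)$. On the other hand, $a\in\P$ forbids $p_1$ from being a subpath of $a$, which gives $s(q^1)<t(p_1)$. These two inequalities place $p_1$ in the set $\{q\in\R(au):s(q^1)<t(q)\le s(q^2)\}$ appearing in the op-construction when one tries to extend $u$ by one more relation on the left, so this set is nonempty and the extended op-concatenation exists; its support $z$ lies in $AP_{2n+1}$, divides $au$, and satisfies $t(z)=t(u)$.

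The main obstacle will be keeping the op-concatenation indexing consistent---translating between the construction-order labeling $q_j$ and the left-to-right labeling $q^i$, and identifying the correct set $L^{op}$ at each step---together with a few boundary cases; for instance when $b$ has length zero the strict inequality $t(p_{2n-1})<t(q^{2n-1})$ fails and minimality forces $p_{2n-1}=q^{2n-1}$, so the base case of the induction needs a small separate argument.
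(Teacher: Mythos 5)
Your proposal is essentially the paper's own proof: the paper disposes of this lemma with the single remark that it is dual to Lemma \ref{A2}, and your explicit dualization is faithful --- the inequalities $(\hat x_1)$--$(\hat x_3)$ are precisely the mirror images of \eqref{x1}--\eqref{x3}, the right-to-left induction with its base case at $j=n-1$ reproduces the paper's argument in the opposite algebra, and the reduction of (ii) to (i) by truncating the op-concatenation at $q^1$ and adjoining one relation on the left (with $p_1$ as the witness that the relevant set is nonempty) is the exact dual of the paper's proof of Lemma \ref{A2}(ii). One small correction to your closing worry: when $\vert b\vert =0$ the conclusion of (i) can actually fail (already for radical square zero algebras one gets $p_{2n-1}=q^{2n-1}$ and $a$ a single arrow), so no separate base-case argument is possible; rather, just as Lemma \ref{A2} implicitly requires $\vert a\vert>0$ for its inequality \eqref{hipa}, this lemma is implicitly, and in all of its applications, invoked only with $b$ of positive length.
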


\begin{proof} The proof is dual to that of the previous lemma.
\end{proof}

\begin{prop} \label{cero 2}  Let  $w \in AP_{2n+1}$, $\Sub(w) = \{\psi_1, \psi_2\}$ with  $w =   \psi_1 R(\psi_1) =  L(\psi_2)\psi_2$. If $w = w(p_1,  \dots ,p_{2n}) = w^{op}(q^1, \dots ,q^{2n})$, we have: 
\begin{itemize}   
\item[(i)] If  $\gamma \in \Sub(\psi_2)$ is such that $t(\psi_1) < t(\gamma)$, then $t(p_1) \leq s(\gamma)$. 
\item[(ii)] If $\gamma \in \Sub(\psi_1)$ is such that  $s(\gamma) < s(\psi_2)$, then $t(\gamma) \leq s(q^{2n})$. 
\end{itemize} 
\end{prop}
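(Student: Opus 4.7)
The plan is to reduce both parts to Lemma~\ref{A2} and Lemma~\ref{A} via case analysis. For (i) the target is Lemma~\ref{A}(i) applied to $w=\psi_1\in AP_{2n}$ and $v=\gamma\in AP_{2n-1}$; for (ii), dually, Lemma~\ref{A2}(i) applied to $\psi_2$ and $\gamma$.

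For (i), the easy case is $t(\psi_1)\le s(\gamma)$, which gives $t(p_1)\le t(p_{2n-1})=t(\psi_1)\le s(\gamma)$. Otherwise $s(\gamma)<t(\psi_1)<t(\gamma)$, and I would write $\psi_1\, b = a\,\gamma$ where $a$ is the prefix of $\psi_1$ ending at $s(\gamma)$ and $b$ is the suffix of $\gamma$ starting at $t(\psi_1)$, both nontrivial paths. Since the leftmost element in the op-concatenation of $\gamma$ starts at $s(\gamma)$, Lemma~\ref{A}(i) applied to $\psi_1$ and $\gamma$ will produce $t(p_1)\le s(\gamma)$, provided that $b\in\P$.

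The hard part is precisely to verify $b\in\P$. For $n=1$ this is vacuous since $\gamma\in AP_1$ is an arrow, so the overlap case cannot arise. For $n\ge 2$, I would argue by contradiction: suppose $r\in\R$ divides $b$, so $r\subseteq\gamma$ with $s(r)\ge t(\psi_1)$. Let $r^{2n-2}$ be the rightmost element of the op-concatenation of $\gamma$, so $t(r^{2n-2})=t(\gamma)$. First, minimality of $\R$ forbids any relation contained in $\gamma$ from starting strictly after $r^{2n-2}$, because such a relation would be a proper subpath of $r^{2n-2}$; hence $s(r)\le s(r^{2n-2})$. Second, if we had $s(r^{2n-2})\ge t(\psi_1)=t(p_{2n-1})$, then the inequalities $s(p_{2n})<t(p_{2n-1})\le s(r^{2n-2})$ together with $t(r^{2n-2})\le t(\gamma)\le t(w)=t(p_{2n})$ would realize $r^{2n-2}$ as a proper subpath of $p_{2n}$, again contradicting minimality of $\R$. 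Combining $s(r)\le s(r^{2n-2})<t(\psi_1)\le s(r)$ yields the desired contradiction.

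Part (ii) is symmetric. The easy case is $t(\gamma)\le s(\psi_2)=s(q^2)\le s(q^{2n})$. In the overlap case $s(\gamma)<s(\psi_2)<t(\gamma)$ one writes $a\,\psi_2=\gamma\, b$, with $a$ the prefix of $\gamma$ ending at $s(\psi_2)$ and $b$ the suffix of $\psi_2$ starting at $t(\gamma)$, and applies Lemma~\ref{A2}(i) to $\psi_2$ and $\gamma$ to obtain $t(\gamma)\le s(q^{2n})$, once $a\in\P$ is established. The verification of $a\in\P$ is the dual minimality argument: comparing the leftmost element $t_1$ of $\gamma$'s regular concatenation with the leftmost op-relation $q^1$ of $w$, one uses $s(q^1)=s(w)\le s(\gamma)=s(t_1)$ and the fact that the op-concatenation construction of $w$ forces $t(q^1)>s(q^2)=s(\psi_2)$, so any hypothetical relation dividing $a$ would make $t_1$ a proper subpath of $q^1$, contradicting minimality of $\R$. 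The main obstacle throughout is exactly this minimality argument that secures $b\in\P$ (respectively $a\in\P$).
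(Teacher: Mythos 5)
Your proposal is correct and follows essentially the same route as the paper: factor $\psi_1 b = a\gamma$ (resp.\ $a\psi_2=\gamma b$), check that $b$ (resp.\ $a$) lies in $\P$, and invoke Lemma~\ref{A}(i) (resp.\ Lemma~\ref{A2}(i)). Your minimality argument securing $b\in\P$ (resp.\ $a\in\P$) is just an expanded version of the paper's one-line observation that $b$ divides $R(\psi_1)$ (a proper suffix of the relation $p_{2n}$), resp.\ that $a$ divides $L(\psi_2)$ (a proper prefix of $q^1$).
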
 
\begin{proof} 
\begin{itemize}   
\item[(i)]  Let  $a,b$ be such that   $  \psi_1 b = a \gamma $. Then $b \in \P$ because $b$ divides   $ R(\psi_1)$ and the result follows from Lemma \ref{A}(i).   \item[(ii)]  Similarly, it follows from  Lemma \ref{A2}(i). 
\end{itemize} 
\end{proof}

In the following lemma we show that we can weaken the assumptions on item $(ii)$ in the previous lemmas.

\begin{lemma} \label{sh} Let
$w, u \in AP_{2n}$ be such that  $w  b = a u $, with $a,b$ paths in  $Q$.
Then $a \in \P$  if and only if $b \in \P$.
\end{lemma}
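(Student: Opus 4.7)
\medskip\noindent
\textbf{Plan of proof.} Assume $n\geq 1$, the case $n=0$ being trivial ($w,u\in Q_0$ forces $a=b$). Set $\pi:=wb=au$. The op-concatenation $w=w^{\mathrm{op}}(q^1,\dots,q^{2n-1})$ singles out $\tau:=q^{2n-1}\in\R$ starting at $s(w)=s(\pi)$, and by the minimality of $\R$ (no proper subpath of an element of $\R$ lies in $\R$) the path $\tau$ is the \emph{unique} element of $\R$ occurring as a prefix of $\pi$. Any relation contained in $a$ would also be a prefix of $\pi$, so it must equal $\tau$; hence
\[
a\in\P \iff \tau\not\subseteq a \iff |a|<|\tau|.
\]
The dual argument applied to $u$ produces a unique $\nu\in\R$ that is a suffix of $\pi$, giving $b\in\P\iff|b|<|\nu|$.

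Combined with the identity $|a|+|u|=|w|+|b|$ coming from $wb=au$, the biconditional $a\in\P\iff b\in\P$ is equivalent to the single numerical identity
\[
|w|-|\tau|\;=\;|u|-|\nu|. \qquad (\ast)
\]
The plan to prove $(\ast)$ is to strip the outermost relations off the op-concatenations of $w$ and $u$: dropping $q^{2n-1}=\tau$ from $w$'s op-concatenation leaves $w^{\flat}\in AP_{2n-1}$ (the support of $(q^1,\dots,q^{2n-2})$), and dually $u^{\flat}\in AP_{2n-1}$ is obtained from $u$. The equation $wb=au$ can then be recast in two ways, once as $w\,b=a\,(u^{\flat}\cdot\text{tail})$ and once as $(\text{head}\cdot w^{\flat})\,b = a\,u$; each form fits the hypotheses of one of Lemmas \ref{A}(i) and \ref{A2}(i), which pair an $AP_{2n}$-element with an $AP_{2n-1}$-element. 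Reading the length inequalities ``$t(p_1)\leq s(q^2)$'' and ``$t(p_{2n-2})\leq s(q^{2n-1})$'' produced by these two applications in opposite directions squeezes the two positions together and yields $(\ast)$.

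The main obstacle is the last step: the supports $w^{\flat}$ and $u^{\flat}$ are not literally $\pi[|\tau|,|w|]$ and $\pi[|a|,|\pi|-|\nu|]$ but are determined by the positions of the penultimate op-relations $q^{2n-2}$ and $\tilde q^{2}$, so translating the conclusions of Lemmas \ref{A} and \ref{A2} into length information about $|w|-|\tau|$ and $|u|-|\nu|$ demands careful bookkeeping of the op-concatenation indices. A sanity check in the base case $n=1$ is reassuring: then $w,u\in\R$ coincide with $\tau,\nu$, and $(\ast)$ reads $0=0$ trivially.
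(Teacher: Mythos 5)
Your reduction collapses exactly at the step you defer: the numerical identity $(\ast)$ is not a matter of ``careful bookkeeping'' --- it is false. The two equivalences $a\in\P\iff|a|<|\tau|$ and $b\in\P\iff|b|<|\nu|$ are correct once $\tau$ is taken to be the unique relation of $\R$ that is a prefix of $\pi$, i.e.\ $p_1$ in the presentation $w=w(p_1,\dots,p_{2n-1})$ (your $q^{2n-1}$ is the relation ending at $t(w)$, not the one starting at $s(w)$ --- a minor slip). But the biconditional between these two threshold conditions, evaluated at the single value of $|a|$ the data provides, does not force the thresholds to coincide, so it is not equivalent to $(\ast)$; and $(\ast)$ itself can fail. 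Concretely, take the linear quiver $0\to 1\to\dots\to 28$ and let $\R$ consist of the paths $[0,10]$, $[5,15]$, $[7,18]$, $[10,22]$, $[16,28]$, where $[x,y]$ denotes the path from vertex $x$ to vertex $y$; none divides another, and the ideal is admissible. Then $w=w\bigl([0,10],[5,15],[10,22]\bigr)=[0,22]$ and $u=u\bigl([5,15],[7,18],[16,28]\bigr)=[5,28]$ both lie in $AP_4$, and $wb=au$ with $a=[0,5]$, $b=[22,28]$, both in $\P$ (so Lemma \ref{sh} holds); yet $|w|-|\tau|=22-10=12$ while $|u|-|\nu|=23-12=11$. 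Hence no translation of Lemmas \ref{A} and \ref{A2} into length identities can deliver $(\ast)$, and beyond the trivial case $n=1$ your proposal proves nothing.

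What is true, and what the paper's proof establishes, is a one-sided estimate rather than an equality of offsets: writing $w=w(p_1,\dots,p_{2n-1})$ and $u=u^{op}(q^1,\dots,q^{2n-1})$, if $a\in\P$ then $s(q^{2n-1})<t(p_{2n-1})=s(b)$, so $b$ is a proper suffix of the minimal relation $q^{2n-1}$ and therefore lies in $\P$; the converse direction is symmetric. (In the example above this inequality reads $16<22$.) The paper gets it from the interleaving inequalities $s(q^{2j-1})<t(p_{2j-1})$ and $t(p_{2j})\le t(q^{2j-1})$, established inside the proof of Lemma \ref{A2}(i) for precisely this configuration, combined with the greedy construction of $w$ and the maximality of $t(q^{2n-3})$ in the op-construction of $u$. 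So your instinct to lean on Lemmas \ref{A2} and \ref{A} is sound, but they must be aimed at this inequality comparing the extremal relations of $w$ and $u$, not at the exact identity $(\ast)$, which the combinatorics simply does not satisfy.
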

\begin{proof}
Let  $w=w(p_1, \dots, p_{2n-1})$, $u=u^{op}(q^1, \dots, q^{2n-1})$
and suppose that  $a \in \P$. To show that  $b \in \P$ it is enough to verify that $s(q^{2n-1}) < t(p_{2n-1})=s(b)$. From the proof of   Lemma \ref{A2}(i) we have that $$s(q^{2n-3}) <
t(p_{2n-3}) \quad \mbox{ and } \quad t(p_{2n-2}) \leq t(q^{2n-3}),$$
and the construction of $w$ imply that $t(p_{2n-3}) \leq s(p_{2n-1}) <
t(p_{2n-2})$.  These inequalities imply that  
$$s(q^{2n-3}) < s(p_{2n-1}) \leq t(q^{2n-3})$$ and therefore,
$t(q^{2n-3}) < t(p_{2n-1})$. Now  the maximality of  $t(q^{2n-3})$
implies that  $s(q^{2n-1}) < t(p_{2n-1})$. 
Similarly one can show
that if $b \in \P$ then $a \in \P$.
\end{proof}

 Thus, item  (ii) of  Lemmas  \ref{A2} and  \ref{A} can
 be written as follows:

\begin{lemma} \label{A3}
Let  $w, u  \in AP_{2n}$,  such that $w  b = a u $ with  $a$ or  $ b$
in $ \mathcal{P}$.Then:
\begin{itemize}
\item [(i)]   There exists $z \in AP_{2n+1}$ such that $z$ divides the path $au$ and $t(z)=t(u)$.
\item[(ii)]   There exists $z \in AP_{2n+1}$ such that $z$ divides the path $wb$ and $s(z)=s(w)$.
\end{itemize}

\end{lemma}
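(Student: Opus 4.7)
The plan is to observe that Lemma \ref{A3} is simply a repackaging of Lemma \ref{A2}(ii) and Lemma \ref{A}(ii) under a weaker hypothesis, so the entire proof should reduce to invoking \ref{sh} to restore the stronger hypothesis that both $a$ and $b$ lie in $\mathcal{P}$, and then quoting the appropriate earlier lemma.

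First I would apply Lemma \ref{sh}: given $w,u\in AP_{2n}$ with $wb=au$ and at least one of $a,b$ in $\mathcal{P}$, that lemma forces the other to lie in $\mathcal{P}$ as well. From this point on, we work under the assumption $a,b\in\mathcal{P}$, which is exactly the hypothesis used in Lemma \ref{A2}(ii) and Lemma \ref{A}(ii).

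For part (i), I would view $w$ through its concatenation presentation $w=w(p_1,\dots,p_{2n-1})$ and $u$ through its op-concatenation presentation $u=u^{op}(q^1,\dots,q^{2n-1})$; this is legitimate since $AP_{2n}=AP_{2n}^{op}$ by \cite{B}*{Lemma 3.1}. The relation $wb=au$ with $a,b\in\mathcal{P}$ is then exactly the hypothesis of Lemma \ref{A}(ii), whose conclusion gives the desired $z\in AP_{2n+1}$ dividing $au$ with $t(z)=t(u)$.

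For part (ii), I would swap the roles: now view $u$ as an op-concatenation $u=u^{op}(\tilde q^1,\dots,\tilde q^{2n-1})$ and $w$ as a concatenation $w=w(\tilde p_1,\dots,\tilde p_{2n-1})$. Reading the equation $wb=au$ as $au=wb$, that is $a\cdot u = w\cdot b$ with $u$ the op-concatenated element and $w$ the concatenated element, matches exactly the hypothesis of Lemma \ref{A2}(ii) (with its $w$, $u$ playing the roles of our $u$, $w$). The conclusion then yields $z\in AP_{2n+1}$ dividing $wb$ with $s(z)=s(w)$. The only point to double-check is that the bookkeeping of which element plays the role of the concatenation and which the op-concatenation is consistent; this is the sole place a slip could occur, but the symmetry $AP_{2n}=AP_{2n}^{op}$ makes it harmless. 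There is no genuine obstacle: all substantive combinatorics was carried out in Lemmas \ref{A2}, \ref{A} and \ref{sh}.
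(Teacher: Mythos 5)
Your proposal is correct and matches the paper exactly: the paper states Lemma \ref{A3} without a separate proof, introducing it precisely as a rewriting of Lemmas \ref{A2}(ii) and \ref{A}(ii) once Lemma \ref{sh} upgrades the hypothesis ``$a$ or $b$ in $\mathcal{P}$'' to ``$a$ and $b$ in $\mathcal{P}$''. Your bookkeeping of which element is read as a concatenation and which as an op-concatenation (using $AP_{2n}=AP_{2n}^{op}$) is also the intended one.
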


\begin{lemma}  \label{cero en i} Let  $w \in AP_n$ and $\Sub(w) = \{
\zeta_1,\dots ,\zeta_m\}$ with  $w = L_m \zeta_m$. If
$\psi \in \Sub(\zeta_m)$ is such that   $\zeta_m = L(\psi) \psi$,
then $L_mL(\psi)  \in I$.\end{lemma}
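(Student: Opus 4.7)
My plan is to exhibit, in each case, an explicit relation from $\R$ contained in the prefix $L_m L(\psi)$ of $w$ (which, by construction, ends at $s(\psi)$); since $I$ is generated by $\R$, this will give $L_m L(\psi) \in I$. The small case $n = 2$ is immediate: $w \in \R$ is itself a relation, $\zeta_m$ is its last arrow, and $\psi \in \Sub(\zeta_m) \subseteq Q_0$ must equal $e_{t(w)}$ in order for $\zeta_m = L(\psi)\psi$ to hold, so $L_m L(\psi) = w \in \R$.

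For $n$ odd with $n \geq 3$: by \cite{B}*{Lemma 3.3} one has $\Sub(w) = \{\psi_1, \psi_2\}$, so $\zeta_m = \psi_2$ and $\psi \in \Sub(\psi_2)$ with $R(\psi) = e$, giving $t(\psi) = t(\zeta_m) = t(w)$. Since $R(\psi_1) \neq e$ forces $t(\psi_1) < t(w)$, we have $t(\psi_1) < t(\psi)$, so Proposition \ref{cero 2}(i) applied to $w$ with $\gamma := \psi$ yields $t(p_1) \leq s(\psi)$; hence $p_1 \subseteq L_m L(\psi)$.

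For $n$ even with $n \geq 4$: here $\zeta_m \in AP_{n-1}$ sits in an odd-index $AP$, so \cite{B}*{Lemma 3.3} applied to $\zeta_m$ gives $\Sub(\zeta_m) = \{\psi_1', \psi_2'\}$ with $\psi = \psi_2'$. I apply Proposition \ref{cero 2}(i) to $\zeta_m$, choosing $\gamma$ to be the rightmost sub-divisor of $\psi$, so that $t(\gamma) = t(\psi) = t(\zeta_m) > t(\psi_1')$; this delivers $t(r_1) \leq s(\gamma)$, where $r_1$ is the first relation of $\zeta_m$'s direct concatenation. A combinatorial step then uses the maximality of $\zeta_m$ in $\Sub(w)$ and of $\psi = \psi_2'$ in $\Sub(\zeta_m)$, together with the minimality of $\R$ (no relation is a proper sub-path of another), to produce a relation lying inside the shorter prefix $L_m L(\psi)$.

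The main obstacle is indeed the even case: Proposition \ref{cero 2}(i) only bounds $t(r_1)$ by $s(\gamma)$, but $s(\gamma) > s(\psi)$ in general, so one must close that gap. I expect to rule out the obstructing configuration $s(\psi) < t(p_1)$ by a short case analysis on whether $\zeta_m$'s first relation $r_1$ coincides with $p_2$ or not: in either situation, the minimality of $\R$ together with the rightmost nature of $\zeta_m$ in $\Sub(w)$ should force $s(\psi) \geq t(p_1)$, so that $p_1 \subseteq L_m L(\psi)$ and the conclusion follows.
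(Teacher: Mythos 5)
Your $n=2$ case and your odd case do work: for $n$ odd one has $\Sub(w)=\{\psi_1,\psi_2\}$, $\zeta_m=\psi_2$, $t(\psi)=t(w)>t(\psi_1)$, and Proposition \ref{cero 2}(i) gives $t(p_1)\leq s(\psi)$, so $p_1$ divides $L_mL(\psi)$. The genuine gap is the even case with $n\geq 4$ --- which is precisely the case the lemma is invoked for in Section \ref{morfismoF}, where $\Sub(w)$ has $m>2$ elements. There you only obtain, from Proposition \ref{cero 2}(i) applied to $\zeta_m$, the bound $t(r_1)\leq s(\gamma)$ with $\gamma$ the rightmost divisor of $\psi$, and as you yourself note $s(\gamma)$ lies in general strictly to the right of $s(\psi)$. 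Your plan to close this gap by a case analysis on whether $r_1$ coincides with $p_2$, using the minimality of $\R$ and the fact that $\zeta_m$ is rightmost in $\Sub(w)$, is stated only as an expectation (``I expect'', ``should force'') and is not carried out; moreover nothing in your argument gives any control of $s(\psi)$ from the left, because the direct concatenation of $\zeta_m$ (a rightmost element of $\Sub(w)$) is not simply related to the direct concatenation $(p_1,\dots,p_{n-1})$ of $w$.

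The missing ingredient is exactly the tool the paper uses, and it makes the proof uniform in $n$: by \cite{B}*{Lemma 3.1} one has $AP_n=AP_n^{op}$, so $w=w^{op}(q^1,\dots,q^{n-1})$; then the rightmost element of $\Sub(w)$ is $\zeta_m=\zeta_m^{op}(q^2,\dots,q^{n-1})$ and, in turn, $\psi=\psi^{op}(q^3,\dots,q^{n-1})$, whence $s(\psi)=s(q^3)\geq t(q^1)$ by the very construction of the op-concatenation; since $q^1$ starts at $s(w)$, it divides $L_mL(\psi)$ and the lemma follows for every $n$ in one step. In particular the inequality $t(p_1)\leq s(\psi)$ you are aiming at is true (indeed $p_1$ and $q^1$ both start at $s(w)$ along the same path, so the minimality of $\R$ forces $p_1=q^1$), but the only access to the position of $s(\psi)$ is through this op-description of the rightmost elements of $\Sub(w)$ and $\Sub(\zeta_m)$, which your sketch never establishes; without it the even case remains unproved.
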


\begin{proof} From \cite{B}*{Lemma 3.1} we know that  $AP_{n} = AP^{op}_{n}$, thus
 $$w = w(p_1, \dots ,p_{n-1}) =
w^{op}(q^1,\dots ,q^{n-1})$$ and then $ \zeta_m  =   \zeta_m^{op}(q^2, \dots ,q^{n-1})$ and  $ \psi= \psi^{op}(q^3, \dots ,q^{n-1}).$ So $
t(q_1) \leq s(q^3) = s(\psi)$ and therefore  $q^1$ divides 
$L_mL(\psi)$.
  \end{proof}

\begin{lemma} \label{iguales} Let $\{ \zeta_1, \dots ,\zeta_m \}$ be the ordered set
of all  the  concatenations in  $AP_{2n-1}$ contained in a path
$T$ and satisfying   $T = a_i\zeta_ib_i$, with  $a_i, b_i \in
\mathcal{P}$, $s(\zeta_i) < s(\zeta_{i+1})$. For each $i$, let
  $\Sub(\zeta_i) = \{\psi_1^i,\psi_2^i\} \subseteq AP_{2n-2}$, $\zeta_i =
\psi_1^iR(\psi_1^i) = L(\psi_2^i)\psi_2^i$. Then
  $\psi_2^i = \psi_1^{i+1}$ for all $i = 1, \dots,  m-1$.
\end{lemma}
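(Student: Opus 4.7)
My plan is to show $\psi_2^i = \psi_1^{i+1}$ by verifying that, as subpaths of the directed path $T$, they share both source and target. I would first write $\zeta_i = w(p_1^i, \dots, p_{2n-2}^i) = w^{op}(q^{1,i}, \dots, q^{2n-2,i})$, using Bardzell's Lemma 3.1, and analogously for $\zeta_{i+1}$. Since $\zeta_i$ has odd index, Bardzell's Lemma 3.3 and its op-dual identify the two elements of $\Sub(\zeta_i)$ as the $p$-prefix $\psi_1^i = w(p_1^i, \dots, p_{2n-3}^i)$ (with $s(\psi_1^i) = s(\zeta_i)$) and the op-prefix $\psi_2^i = w^{op}(q^{1,i}, \dots, q^{2n-3,i})$ (with $t(\psi_2^i) = t(\zeta_i)$). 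The lemma thus reduces to the two endpoint equalities $s(\psi_2^i) = s(\zeta_{i+1})$ and $t(\psi_1^{i+1}) = t(\zeta_i)$.

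\textbf{Source equality.} Via the natural correspondence between the $p$- and op-descriptions of $\zeta_i$, one has $s(\psi_2^i) = s(p_2^i)$, so the source equality reduces to the relation equality $p_1^{i+1} = p_2^i$. I would prove this by contradiction. If $s(p_1^{i+1}) < s(p_2^i)$, then, combining the inequality $s(p_1^i) < s(p_1^{i+1})$ (since $\zeta_{i+1}$ comes after $\zeta_i$) with $s(p_1^{i+1}) < s(p_2^i) < t(p_1^i)$ (using $s(p_2^i) < t(p_1^i)$ from the construction of $\zeta_i$), the relation $p_1^{i+1}$ would lie in the set $L_1^{(i)} = \{p \in \R(T) : s(p_1^i) < s(p) < t(p_1^i)\}$, contradicting the minimal choice of $s(p_2^i)$ in $L_1^{(i)}$. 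If instead $s(p_1^{i+1}) > s(p_2^i)$, I would produce a $(2n-1)$-concatenation in $T$ whose source is $s(p_2^i)$ by applying Lemma \ref{A3}(ii) to the pair $\psi_2^i, \psi_1^{i+1}$, both $(2n-2)$-concatenations inside $T$ that, by the endpoint data and the hypotheses $a_i, b_i, a_{i+1}, b_{i+1} \in \P$, admit a decomposition $\psi_2^i \cdot b = a \cdot \psi_1^{i+1}$ with $a$ or $b$ in $\P$; the resulting $z \in AP_{2n-1}$ satisfies $s(z) = s(\psi_2^i) = s(p_2^i) < s(p_1^{i+1}) = s(\zeta_{i+1})$, contradicting the minimality of $s(\zeta_{i+1})$. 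Since $\R$ is a minimal set of generators, at most one relation of $\R(T)$ starts at any vertex, so $s(p_1^{i+1}) = s(p_2^i)$ forces $p_1^{i+1} = p_2^i$.

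\textbf{Target equality.} By the symmetric dual argument on the op-descriptions, with maximality replacing minimality and the hypothesis $b_i \in \P$ playing the role of $a_{i+1} \in \P$, one obtains $q^{2, i+1} = q^{1, i}$. The natural correspondence between $p$- and op-descriptions then yields $t(\psi_1^{i+1}) = t(q^{2, i+1}) = t(q^{1, i}) = t(\zeta_i) = t(\psi_2^i)$.

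\textbf{Main obstacle and conclusion.} With matching sources and matching targets inside $T$, I would conclude that $\psi_2^i$ and $\psi_1^{i+1}$ coincide as subpaths of $T$ and hence as elements of $AP_{2n-2}$. The main technical obstacle is the second case of the source step: producing the decomposition $\psi_2^i \cdot b = a \cdot \psi_1^{i+1}$ with $a$ or $b$ in $\P$ so as to invoke Lemma \ref{A3}(ii) requires a careful bookkeeping of endpoints that combines the setup data $t(\psi_2^i) = t(\zeta_i)$ and $s(\psi_1^{i+1}) = s(\zeta_{i+1})$ with the four overlap constraints $a_i, b_i, a_{i+1}, b_{i+1} \in \P$. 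Everything else is a direct consequence of the minimality and maximality properties underpinning the greedy constructions of $\zeta_i$ and $\zeta_{i+1}$, in the spirit of Lemmas \ref{A2} and \ref{A}.
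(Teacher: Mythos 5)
There is a genuine gap, and it sits at the very first step of your reduction: the claim that ``via the natural correspondence between the $p$- and op-descriptions of $\zeta_i$, one has $s(\psi_2^i)=s(p_2^i)$''. The two elements of $\Sub(\zeta_i)$ are the left-greedy prefix $\psi_1^i=w(p_1^i,\dots,p_{2n-3}^i)$ and the \emph{op}-suffix $\psi_2^i=w^{op}(q^{2},\dots,q^{2n-2})$, and the op-relations $q^j$ of $\zeta_i$ need not agree with the $p_j^i$; in particular $s(\psi_2^i)=s(q^{2})$, which can differ from $s(p_2^i)$. Concretely, take a linear quiver with vertices $x_0,\dots,x_{12}$ and let $\R$ consist of the paths $[0,3],[1,4],[2,6],[3,9],[8,12]$ (a path denoted by its endpoints). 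Then $w=[0,12]\in AP_5$ has left-greedy description $([0,3],[1,4],[3,9],[8,12])$, so $s(p_2)=x_1$, while its op-description is $([0,3],[2,6],[3,9],[8,12])$ and $\Sub(w)=\{[0,9],[2,12]\}$ (note $[1,12]\notin AP_4$, since no relation starts in $[x_4,x_6)$), so $s(\psi_2)=x_2\neq s(p_2)$. Hence reducing the lemma to the relation equality $p_1^{i+1}=p_2^i$ is unsound: your case analysis (minimality of $s(p_2^i)$ in $L_1^{(i)}$, etc.) proves statements about $p_2^i$, not about $\psi_2^i$, and when $q^{2}\neq p_2^i$ the conclusion $\psi_2^i=\psi_1^{i+1}$ would actually force $p_1^{i+1}=q^{2}\neq p_2^i$, so the intermediate claim you aim at is not even the right one. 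The dual ``target equality'' step has the same defect.

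A secondary point: in your second case you invoke ``the minimality of $s(\zeta_{i+1})$'', but $\zeta_{i+1}$ is not defined by any minimality; the correct contradiction is that the element $z\in AP_{2n-1}$ produced by Lemma \ref{A3} would have both complements in $T$ dividing $a_{i+1}$ and $b_i$ (hence in $\P$) and would lie strictly between $\zeta_i$ and $\zeta_{i+1}$, contradicting that $\{\zeta_1,\dots,\zeta_m\}$ lists \emph{all} such concatenations consecutively. This is essentially how the paper argues, but working directly with the pair $\psi_2^i,\psi_1^{i+1}$ rather than with the ambient greedy relations: if $s(\psi_1^{i+1})<s(\psi_2^i)$, then $\psi_2^i$ divides $\zeta_{i+1}$ and would be a third element of $\Sub(\zeta_{i+1})$, contradicting $\Sub(\zeta_{i+1})=\{\psi_1^{i+1},\psi_2^{i+1}\}$ from \cite{B}*{Lemma 3.3}; if $s(\psi_2^i)<s(\psi_1^{i+1})$, one writes $\psi_2^i b=a\psi_1^{i+1}$ with $a,b\in\P$ and applies Lemma \ref{A3}(i) to get the forbidden intermediate $z$. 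If you recast your argument at the level of $\psi_2^i$ and $\psi_1^{i+1}$ (dropping the false identification with $p_2^i$), your second case becomes the paper's, but the first case needs the $\Sub$-cardinality argument rather than the minimality of $s(p_2^i)$.
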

  \begin{proof} We consider  $\zeta_i , \mbox{ } \zeta_{i+1}$.
The situation can be pictured as follows:
 \[  \xymatrix{
\ar@{|-|}[rrrrrrrrrr]^{T} & & & & & & & & & & \\
\ar@{..}[r]^{a_i} &
\ar@{|-|}[rrrrrr]^{\zeta_i}|(.2){|}_(.1){L(\psi_2^i)}_(.6){\psi_2^i} & & & & & & \ar@{..}[rrr]^{b_i}  & & & \\
\ar@{..}[rr]^{a_{i+1}} & &
\ar@{|-|}[rrrrrr]^{\zeta_{i+1}}|(.8){|}_(.3){\psi_1^{i+1}}_(.9){R({\psi_1^{i+1}})}
& & & & & & \ar@{..}[rr]^{b_{i+1}}  & &
 } \]
If  $s(\psi_1^{i+1}) < s(\psi_2^i)$ we should have
\begin{equation*} s(\zeta_{i+1}) =
s(\psi_1^{i+1}) < s(\psi_2^i) < t(\psi_2^i)  = t(\zeta_i) <
t(\zeta_{i+1}). \end{equation*} Then
  $\psi_2^i \in \Sub(\zeta_{i+1})$ with
  $\psi_2^i \neq \psi_1^{i+1}$, $\psi_2^i \neq
  \psi_2^{i+1}$. This is a contradiction since  $  \Sub(\zeta_{i+1}) =  \{\psi_1^{i+1},\psi_2^{i+1}\}$, see \cite{B}*{Lemma 3.3}.

If $s(\psi_2^i) < s(\psi_1^{i+1})$, let $ \delta =
  \psi_2^ib = a\psi_1^{i+1}$. Then  $a $ and $ b$ belong to $ \mathcal{P}$ since  $a$ divides   $a_{i+1}$ and  $b$ divides 
$b_i$. From  Lemma \ref{A3}(i) we deduce that there exists $z \in AP_{2n-1}$ such that
$z$ divides   $\delta$ and  satisfies
$$s(\zeta_i) < s(\psi_2^i) \leq s(z) < t(z) =
t(\psi_1^{i+1}) < t(\zeta_{i+1})$$ Then  $z \not \in
\{ \zeta_1, \dots ,\zeta_m \}$,  a  contradiction.
\end{proof}

Now we will show that  $ {\mathbf F} : \Ap A   \to \Bar A$ is a comparison
morphism. 
It is clear that $ \varepsilon \circ F_0 =  \mu \circ \id_A$.  For $n \geq 1$ we will show that $F_{n-1} \circ d_n = b_n \circ F_n$ 
 inductively. 
If $n = 1$, 
\begin{align*} b_1\circ F_1(1 \otimes \alpha \otimes 1) & =   \alpha \otimes 1 - 1 \otimes \alpha  \\ & =   F_0(\alpha \otimes
e_{t(\alpha)} \otimes 1  - 1\otimes e_{s(\alpha)}\otimes \alpha )
 \\ &  =   F_0\circ d_1 (1 \otimes \alpha\otimes 1). \end{align*}
If  $n = 2$ and   $w =
\alpha_1\dots \alpha_s \in \R$, then 
\begin{align*}  
b_2\circ F_2(1 \otimes w\otimes 1)  =  &  
b_2(\sum_{i= 1}^{s-1} 1\otimes \alpha_1 \dots \alpha_i\otimes \alpha_{i+1}\otimes
\alpha_{i+2} \dots \alpha_s) \\
 = & \sum_{i= 1}^{s-1}
\alpha_1 \dots \alpha_i\otimes \alpha_{i+1}\otimes
\alpha_{i+2} \dots \alpha_s \\
& -   \sum_{i= 1}^{s-1} 1\otimes \alpha_1 \dots \alpha_{i+1}\otimes
\alpha_{i+2} \dots \alpha_s \\
& +  \sum_{i= 1}^{s-1} 1\otimes \alpha_1 \dots \alpha_i\otimes \alpha_{i+1}
\alpha_{i+2} \dots \alpha_s \\  = &
 \sum_{i= 1}^s \alpha_1 \dots \alpha_{i-1}\otimes \alpha_i\otimes
\alpha_{i+1} \dots \alpha_s 
\\ = & F_1(\sum_{i=1}^s
\alpha_1 \dots \alpha_{i-1}\otimes \alpha_i \otimes
\alpha_{i+1} \dots \alpha_s)  \\  =  & F_1\circ d_2(1\otimes w \otimes
1).  \end{align*}
If  $n\geq 3$, we will first show that 
$b_n\circ F_n = F_{n-1}\circ d_n$ 
 for  $n$ odd.  Let $\Sub(w)= \{ \psi_1, \psi_2\}$, $s(\psi_1) < s(\psi_2)$.  Then  
\begin{align*} b_n\circ F_n(1\otimes w\otimes 1)  & =   b_n(1\otimes
L(\psi_2)F_{n-1}(1\otimes \psi_2\otimes 1) ) \\ & = 
L(\psi_2)F_{n-1}(1\otimes \psi_2 \otimes 1) -   1\otimes
L(\psi_2)b_{n-1}F_{n-1}(1\otimes \psi_2\otimes 1)
\\ & =  L(\psi_2)F_{n-1}(1\otimes \psi_2\otimes 1) - 1\otimes
L(\psi_2)F_{n-2}d_{n-1}(1\otimes \psi_2\otimes 1),
\end{align*}
where the first and the second equalities  follow from Remark 
\ref{obs1}(2-3) and  the last equality follows by the induction hypothesis.

On the other hand, 
\begin{align*} F_{n-1}d_n(1\otimes w\otimes 1) & =   F_{n-1}(L(\psi_2)\otimes
\psi_2\otimes 1) - F_{n-1}(1 \otimes \psi_1\otimes R(\psi_1))  \\ &
=  L(\psi_2)F_{n-1}(1\otimes \psi_2\otimes 1) - F_{n-1}(1 \otimes
\psi_1\otimes 1)R(\psi_1). \end{align*}
Then we only have to prove that 
\begin{align*} F_{n-1}(1 \otimes \psi_1\otimes 1)R(\psi_1) = 1\otimes
L(\psi_2)F_{n-2}d_{n-1}(1\otimes \psi_2\otimes 1).
\end{align*}
In fact 
\[F_{n-1}(1 \otimes \psi_1\otimes 1)R(\psi_1) =
\sum_{\substack{\gamma \in \Sub(\psi_1) \\  \vert L_{\gamma } \vert  > 0}}   1\otimes L_{\gamma}
F_{n-2}(1\otimes \gamma \otimes 1) R_{\gamma}R(\psi_1)\] and 
\begin{align*} 1\otimes L(\psi_2)F_{n-2}d_{n-1}(1\otimes
\psi_2\otimes 1) &  =   1 \otimes L(\psi_2)F_{n-2} \left(
\sum_{\gamma \in \Sub(\psi_2)} L(\gamma)\otimes \gamma \otimes
R(\gamma) \right) \\ & =    \sum_{\gamma \in \Sub(\psi_2)}
1\otimes L(\psi_2)L(\gamma)F_{n-2}(1\otimes \gamma\otimes
1)R(\gamma).\end{align*}
If  $\gamma \in \Sub(\psi_1)\cap \Sub(\psi_2)$ the equality of the corresponding summands is clear.
If $\gamma\in \Sub(\psi_1)$ and  $\gamma \not \in \Sub(\psi_2)$,  
then  $s(\gamma) < s(\psi_2)$ and, from Proposition \ref{cero
2}(ii), we have that $R_{\gamma}R(\psi_1) \in I$.
If  $\gamma \in \Sub(\psi_2)$ and  $\gamma \not \in \Sub(\psi_1)$ then  $t(\psi_1) < t(\gamma)$, and from Proposition \ref{cero
2}(i) we have that $L(\psi_2)L(\gamma) \in I $. This finishes the proof for $n$ odd.

If  $n$ is even 
\begin{align*} b_n\circ & F_n(1\otimes w\otimes 1)  =   b_n( \sum _{i= 1}^{m-1}
1\otimes L_{i+1}F_{n-1}(1\otimes \zeta_{i+1}\otimes
1)R_{i+1})  \\ & =  \sum _{i= 1}^{m-1}
 L_{i+1}F_{n-1}(1\otimes
\zeta_{i+1}\otimes 1)R_{i+1}    -   \sum _{i=
1}^{m-1}
 1\otimes L_{i+1}b_{n-1}F_{n-1}(1\otimes
\zeta_{i+1}\otimes 1)R_{i+1}  \\
& = 
  \sum _{i= 1}^{m-1}
 L_{i+1}F_{n-1}(1\otimes
\zeta_{i+1}\otimes 1)R_{i+1}  -  \sum _{i= 1}^{m-1}
 1\otimes L_{i+1}F_{n-2}d_{n-1}(1\otimes
\zeta_{i+1}\otimes 1)R_{i+1}
 \end{align*}
where the second equality  follows by Remark 
\ref{obs1}(3) and  the third  equality follows by the induction hypothesis.
Since   $\zeta_{i+1} \in AP_{n-1}$ and  $n-1$ is odd, $\Sub(\zeta_{i+1})
= \{\psi_1^{i+1}, \psi_2^{i+1} \}$. Then, the above sum is equal to 
\begin{align*}   \sum _{i= 1}^{m-1} 
 L_{i+1}F_{n-1}(1\otimes
\zeta_{i+1}\otimes 1)R_{i+1}  - \sum _{i= 1}^{m-1}
 1\otimes L_{i+1}L(\psi_2^{i+1})F_{n-2}(1\otimes
\psi_2^{i+1}\otimes 1)R_{i+1} \\
+  \sum _{i= 1}^{m-1}
 1\otimes L_{i+1}F_{n-2}(1\otimes \psi_1^{i+1}\otimes 1)
 R(\psi_1^{i+1})R_{i+1}.\end{align*}
From Lemma \ref{iguales} we have that $\psi_2^i = \psi_1^{i+1}$,
for $i = 1, \dots, m-1$. After cancelling the corresponding terms in the above sum, we obtain that it is equal to
\begin{align*}    \sum _{i= 1}^{m-1}
 L_{i+1}F_{n-1}(1\otimes
\zeta_{i+1}\otimes 1)R_{i+1}   -  \ & 1\otimes L_{m}L(\psi_2^m)F_{n-2}(1\otimes
\psi_2^m\otimes 1) \\
+  &  1 \otimes
L_2F_{n-2}(1\otimes \psi_1^2 \otimes 1) R(\psi_1^2)R_{2}.\end{align*}
From Lemma \ref{cero en i}  we get that $
L_{m}L(\psi_2^m) \in I$, and from 
Lemma \ref{iguales}  we have that $\psi_1^2 = \psi_2^1$. This implies that 
 $R(\psi_1^2)R_2 = R_{1}$. Since   $n-1$ is 
odd and   $\Sub(\zeta_1) = \{\psi_1^1, \psi_2^1\}$, 
Remark  \ref{obs1}(2) implies that 
$$F_{n-1}(1\otimes\zeta_1\otimes1) R_1 =  1\otimes L_2F_{n-2}(1\otimes \psi_1^2 \otimes
  1)R(\psi_1^2)R_2.$$
Finally, the  {sum} we are interested in is equal to 
\begin{equation*}     \sum _{i= 1}^{m-1}
L_{i+1}F_{n-1}(1\otimes
\zeta_{i+1}\otimes 1)R_{i+1} +
F_{n-1}(1\otimes\zeta_1\otimes1) R_1  =  F_{n-1}\circ d_n(1\otimes
w\otimes 1)\end{equation*}
and in this way we have completed the proof for  $b_n\circ F_n =
F_{n-1}\circ d_n$.

\section{The  map $\mathbf G$ is a comparison morphism}\label{morfismoG}

First we will prove some preliminary results that will make the computation of the map $\mathbf G$ easier. 
Let $(v_1, \dots, v_n)$  be a well-concatenated $n$-sequence.  If $v_j v_{j+1} \in I$, there exists $\gamma \in \R$ such that $\gamma$ divides the path 
 $v_j v_{j+1}$ and it satisfies \[ s(v_{j}) \leq
s(\gamma) < t(v_{j}) \quad \mbox{and} \quad  s(v_{j+1}) < t(\gamma) \leq
t(v_{j+1}).\]
From now on, we will call  $\gamma_{j}$ one of these relations (we choose one). Thus for each well-concatenated $n$-sequence such that  $\mathcal{M}_{even} (v_1, \dots, v_n)= \emptyset$ we associate a sequence  $(\gamma_2, \gamma_4, \dots)$:

\[ \xymatrix{ \dots \ar[rr]_{v_{2i-2}} & \ar@/^{5mm}/[rr]^{\gamma_{2i-2}} &   \ar[rr]_{v_{2i-1}} & &
\ar[rr]_{v_{2i}} & \ar@/^{5mm}/[rr]^{\gamma_{2i}} &
\ar[rr]_{v_{2i+1}} & & \ar[rr]_{v_{2i+2}} &
\ar@/^{5mm}/[rr]^{\gamma_{2i+2}} & \ar[rr]_{v_{2i+3}} & & \dots
 } \]
 and if  $\mathcal{M}_{odd}  (v_1, \dots, v_n) = \emptyset$,  we associate a sequence $(\gamma_1, \gamma_3, \dots)$ constructed in the same way.

Suppose that there exists $w \in AP_{n-1}$ such that 
 $v_1 \dots v_n = awb  $ with  $a, b$
paths in $\P$. We have the following question:
\begin{center} Does there exist
$z \in AP_n$ such that $w \in \Sub(z)$ and $z \in \chi(v_1, \dots, v_n) $
? 
\end{center}
In the following lemmas we get conditions that ensure a positive answer to the previous question.
\begin{lemma} \label{previo} Let  $(v_1,\dots,v_n) \in  \P^n$ be a  well-concatenated $n$-sequence  and  $w \in AP_{n-1}$   such that 
$v_1\dots v_n = awb$.
\begin{itemize} 
\item[(i)] If  $w = w(p_1, \dots, p_{n-2} )$, $ s(w) < t(v_1)$,  $\mathcal{M}_{even} = \emptyset$  and $(\gamma_2, \gamma_4, \dots )$ is a sequence associated to
$(v_1,\dots ,v_n)$, then $t(p_{n-2}) \leq t(\gamma_{n-2})$ if $n$ is even and $t(p_{n-3}) \leq t(\gamma_{n-3})$ if $n$ is odd.
 \item[(ii)] If $n$ is odd, $w = w^{op}(q^1, \dots, q^{n-2}) $, $ s(v_n) < t(w) $, $\mathcal{M}_{odd} = \emptyset $ and $(\gamma_1, \gamma_3, \dots )$ is a sequence associated to $(v_1,\dots,v_n)$, then $s(\gamma_3) \leq s(q^2)$.    
\end{itemize}
\end{lemma}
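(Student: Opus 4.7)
The plan is to prove both parts inductively, reducing the main work at each step to establishing a single inequality on starting positions, which then propagates to the desired inequality on endpoints through the minimality of $\R$.

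For part (i), I argue by induction on $k$ the inequality $t(p_{2k}) \le t(\gamma_{2k})$ for $k = 1, \ldots, k_{\max}$, with $k_{\max} = (n-2)/2$ if $n$ is even and $k_{\max} = (n-3)/2$ if $n$ is odd; the conclusion of the lemma corresponds to $k = k_{\max}$. The core observation is that once one has $s(p_{2k}) \le s(\gamma_{2k})$, the inequality $t(p_{2k}) \le t(\gamma_{2k})$ is automatic: both $p_{2k}$ and $\gamma_{2k}$ lie in $\R(T)$ along the directed path $T$, so if $s(p_{2k}) = s(\gamma_{2k})$ they coincide, and if $s(p_{2k}) < s(\gamma_{2k})$ the inequality $t(p_{2k}) > t(\gamma_{2k})$ would make $\gamma_{2k}$ a proper subpath of $p_{2k}$, contradicting the minimality of $\R$ as a generating set of $I$. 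So at each step the task reduces to proving $s(p_{2k}) \le s(\gamma_{2k})$.

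For the base case $k = 1$, the hypothesis $s(p_1) = s(w) < t(v_1)$ together with $v_1 \in \P$ (which forces $p_1 \in \R$ to extend past $t(v_1)$) gives $s(p_1) < t(v_1) = s(v_2) \le s(\gamma_2)$. The relation $p_2$ is chosen with $s(p_2)$ minimal in the set $L_1 = \{p \in \R(T) : s(p_1) < s(p) < t(p_1)\}$. If $\gamma_2 \in L_1$, minimality of $s(p_2)$ directly gives $s(p_2) \le s(\gamma_2)$. If $\gamma_2 \notin L_1$, the lower bound $s(p_1) < s(\gamma_2)$ already proven forces $s(\gamma_2) \ge t(p_1)$, and then $s(p_2) < t(p_1) \le s(\gamma_2)$ since $p_2 \in L_1$. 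Either way $s(p_2) \le s(\gamma_2)$. The inductive step is identical in structure: the induction hypothesis together with $t(\gamma_{2k}) \le t(v_{2k+1}) = s(v_{2k+2}) \le s(\gamma_{2k+2})$ provides the lower-bound analog, and the same dichotomy on $\gamma_{2k+2} \in L_{2k+1}$ yields $s(p_{2k+2}) \le s(\gamma_{2k+2})$.

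Part (ii) is proved by a dual induction from the right end of $w$, using the op-concatenation $w = w^{op}(q^1, \ldots, q^{n-2})$, the hypothesis $s(v_n) < t(w) = t(q_1)$ combined with $v_n \in \P$ (which forces $s(q_1) < t(v_{n-1})$, dually to the base case above), and the maximality of $t(q_{j+1})$ in $L_j^{op}$ that defines the op-concatenation. One proves inductively an inequality $s(q_{2j}) \ge s(\gamma_{n-2j})$ (the op-analog of $t(p_{2k}) \le t(\gamma_{2k})$), terminating in $s(q^2) = s(q_{n-3}) \ge s(\gamma_3)$.

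The main obstacle is identifying the right inductive invariant, so that both alternatives in the dichotomy (either $\gamma_{2k+2} \in L_{2k+1}$, or $\gamma_{2k+2}$ sits strictly to the right of $p_{2k+1}$) feed uniformly into $s(p_{2k+2}) \le s(\gamma_{2k+2})$. Once this invariant is set up, the minimality of $\R$ does the rest, and one avoids a messy case analysis on whether $p_{2k+1}$ extends past $v_{2k+2}$ or lies entirely within $v_{2k+1} v_{2k+2}$.
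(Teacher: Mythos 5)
Your proof is correct and follows essentially the same route as the paper: the same induction on the even-indexed relations, with the start-point inequality obtained from the minimality in the choice of $p_{2k}$ (via the dichotomy on whether $\gamma_{2k}$ lies in the defining set) and upgraded to the end-point inequality by minimality of the relations in $\R$. Your dual induction for (ii), proving $s(q_{2j})\geq s(\gamma_{n-2j})$ using the maximality in the op-construction, is exactly the ``analogous'' argument the paper leaves implicit.
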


\begin{proof}  
(i)  For  $i \geq 1$, we will prove  the  inequalities  $t(p_{2i}) \leq t(\gamma_{2i})$.  
Since   $ s(w) = s(p_1) < t(v_1) = s(v_2) \leq s(\gamma_2)$ then  $s(p_1)
< s(\gamma_2), $ and therefore $t(p_1) < t(\gamma_2).$ By construction of $p_2$ we have that 
$p_2 = \gamma_2 \mbox{ or  } s(p_2) < s(\gamma_2)$, so
$t(p_2) \leq t(\gamma_2).$

Suppose we have already proved that $t(p_{2i-2})  \leq   t(\gamma_{2i-2})$. Since 
$t(\gamma_{2i-2}) < s(\gamma_{2i}) $ we have that   $t(p_{2i-2}) \leq
t(\gamma_{2i-2}) < s(\gamma_{2i})$. By construction of $p_{2i}$ we deduce that  $p_{2i} = \gamma_{2i}$ or $s(p_{2i}) <
s(\gamma_{2i})$, and therefore, $t(p_{2i}) \leq t(\gamma_{2i}). $

(ii) The proof is analogous to $(i)$.
 \end{proof}

From now on let $T$ be the path $v_1 \dots v_n$. 
\begin{lemma}\label{impar} Let  $ n$ odd,  $(v_1,\dots ,v_n) \in  \P^n$ a well-concatenated $n$-sequence, and  $w\in
AP_{n-1}$ such that $v_1\dots v_n = c wb$. Then
\begin{itemize}
\item[(i)] If   $b  \in \P$, $ s(w) < t(v_1)$  and  $\mathcal{M}_{even} = \emptyset$, then there exists $z \in AP_n$ with $z = wR(w)$ such that $R(w)$ divides the path $b$.
\item[(ii)] If $c \in \P$, $s(v_n) < t(w)$ and $\mathcal{M}_{odd} =\emptyset$,  then there exists $z \in AP_n$,  with $z = L(w)w$ such that $L(w)$
 divides the path $c$.
\end{itemize}
\end{lemma}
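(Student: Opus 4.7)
The plan is to prove (i) by extending the $(n-1)$-concatenation underlying $w$ by one further relation on the right, and to obtain (ii) by the symmetric argument on the op-presentation. Since $n-1$ is even I write $w=w(p_1,\dots,p_{n-2})$ and set $T=v_1\cdots v_n$. To produce $z\in AP_n$ with $z=wR(w)$ it suffices to find $p_{n-1}$ in $L_{n-1}=\{q\in\R(T):t(p_{n-3})\leq s(q)<t(p_{n-2})\}$ (reading $L_1=\{q:s(p_1)<s(q)<t(p_1)\}$ in the edge case $n=3$), since then $z=z(p_1,\dots,p_{n-1})$ has $s(z)=s(w)$ and, by Bardzell's \cite{B}*{Lemma 3.3}, $w$ is automatically the left sub-element $\psi_1$ of $z$, so $z=wR(w)$. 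My candidate is $p_{n-1}=\gamma_{n-1}$, the last term of the sequence associated to $(v_1,\dots,v_n)$ by the hypothesis $\mathcal{M}_{even}=\emptyset$.

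The lower bound $t(p_{n-3})\leq s(\gamma_{n-1})$ follows by combining Lemma~\ref{previo}(i) (which gives $t(p_{n-3})\leq t(\gamma_{n-3})$ for $n$ odd) with the elementary chain $t(\gamma_{n-3})\leq t(v_{n-2})=s(v_{n-1})\leq s(\gamma_{n-1})$, valid because $\gamma_{n-3}$ divides $v_{n-3}v_{n-2}$ and $\gamma_{n-1}$ divides $v_{n-1}v_n$; for $n=3$ the analogous bound is immediate from $s(p_1)=s(w)<t(v_1)\leq s(\gamma_2)$. The main obstacle, and the only place where the hypothesis $b\in\P$ is essential, is the strict upper bound $s(\gamma_{n-1})<t(w)$: if $s(\gamma_{n-1})\geq t(w)$, then together with $t(\gamma_{n-1})\leq t(v_n)=t(b)$ the relation $\gamma_{n-1}$ would lie entirely inside $b$, forcing $b\in I$ and contradicting $b\in\P$.

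Once $\gamma_{n-1}\in L_{n-1}$ is established, the minimal-source element $p_{n-1}$ can be chosen, producing $z\in AP_n$ as above; since $t(z)=t(p_{n-1})\leq t(v_n)$, the path $R(w)$ from $t(w)$ to $t(z)$ is a prefix of $b$, so $R(w)$ divides $b$. Part (ii) is proved by the exact dual: using the op-presentation $w=w^{op}(q^1,\dots,q^{n-2})$ and the associated sequence $(\gamma_1,\gamma_3,\dots,\gamma_{n-2})$ from $\mathcal{M}_{odd}=\emptyset$, I prepend $\gamma_1$ as the new leftmost op-relation; the upper bound $t(\gamma_1)\leq s(q^2)$ follows from $t(\gamma_1)\leq s(v_3)\leq s(\gamma_3)$ combined with Lemma~\ref{previo}(ii) (and directly from $s(v_3)<t(w)$ when $n=3$), while the lower bound $s(w)<t(\gamma_1)$ comes from the dual contradiction using $c\in\P$, producing $z\in AP_n$ with $z=L(w)w$ and $L(w)$ dividing $c$.
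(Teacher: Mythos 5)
Your argument is correct and is essentially the paper's own proof: you show the relevant window set is nonempty because $\gamma_{n-1}$ (resp.\ $\gamma_1$) lies in it --- using Lemma~\ref{previo} for one bound and the $b \in \P$ (resp.\ $c \in \P$) contradiction for the other --- and then adjoin the extremal element of that set to the concatenation defining $w$, invoking \cite{B}*{Lemma 3.3} to get $z=wR(w)$ (resp.\ $z=L(w)w$). The only slip is one of wording in (ii): the relation actually adjoined must be the maximal-target element of the op-window, with $\gamma_1$ serving only as a witness of nonemptiness, exactly parallel to your choice of the minimal-source element in (i).
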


\begin{proof}
(i) If  $w = w(p_1, \dots ,p_{n-2})$, we have to show that there exists $\delta$ such that 
 $z= z(p_1,\dots ,p_{n-2}, \delta) \in AP_n$.
From Lemma \ref{previo}(i) we know that $t(p_{n-3}) \leq
t(\gamma_{n-3})$ and since
 $b \in \P$, we have that $s(\gamma_{n-1}) <
t(w) = t(p_{n-2})$.

\[ \dots \xymatrix{ \ar[rr]_{v_{n-3}} & \ar@/^{7mm}/[rrr]^{\gamma_{n-3}} &
\ar[rrr]_{v_{n-2}}|(.3){\bullet}^(.3){t(p_{n-3})} & &  &
\ar[rrr]_{v_{n-1}} |(.7){\bullet}^(.7){t(p_{n-2})}&
\ar@/^{7mm}/[rrr]^{\gamma_{n-1}} & & \ar[rr]_{v_{n}} & &   }
\]
Thus the set  $L = \{\gamma \in \R(T): t(p_{n-3})\leq s(\gamma)
< t(p_{n-2})\}$ is not empty, since  $\gamma_{n-1} \in L$. If 
$\delta \in L$ is such that  $s(\delta)$ is minimal with respect to all $\gamma \in L$, we have that  $z =
z(p_1,\dots,p_{n-2}, \delta) \in AP_n$ satisfies the desired conditions. 

\medskip

(ii) The proof is analogous to the previous one; in this case we use  Lemma \ref{previo}(ii). 
 \end{proof}

\begin{lemma}\label{par1} Let  $ n$ even and   let $(v_1,\dots,v_n) \in  \P^n$ be a well-concatenated $n$-sequence. If 
$\mathcal{M}_{even}= \emptyset$  and there exist  $w \in
AP_{n-1}$ such that  $ s(v_1) \leq s(w) < t(v_1)$, then  $t(w) \leq
t(\gamma_{n-2}) \leq t(v_{n-1})$.
\end{lemma}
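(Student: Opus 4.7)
The plan is to reduce the statement to a direct application of Lemma~\ref{previo}(i) together with the defining inequalities of $\gamma_{n-2}$. First I would observe that since $n$ is even, the integer $n-1$ is odd, so any $w \in AP_{n-1}$ is the support of an $(n-1)$-concatenation involving $n-2$ relations; hence we may write $w = w(p_1,\dots,p_{n-2})$. The hypothesis $s(v_1) \leq s(w) < t(v_1)$ places $w$ as a subpath of $T = v_1 \cdots v_n$, so there exist paths $a,b$ with $v_1 \cdots v_n = a\, w\, b$ and in particular $s(w) < t(v_1)$, which is precisely the hypothesis required to feed into Lemma~\ref{previo}(i).

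Next I would check that the associated sequence $(\gamma_2, \gamma_4, \dots, \gamma_{n-2})$ is well defined under our hypotheses. Because $\mathcal{M}_{even} = \emptyset$ and $n$ is even, the largest admissible index is $j = n/2 - 1$, corresponding to the pair $(v_{n-2}, v_{n-1})$; thus $v_{n-2}v_{n-1} \in I$, and the chosen relation $\gamma_{n-2} \in \R$ divides this path and by the very definition of the $\gamma_j$ satisfies $s(v_{n-1}) < t(\gamma_{n-2}) \leq t(v_{n-1})$. This already supplies the second inequality $t(\gamma_{n-2}) \leq t(v_{n-1})$ of the conclusion, with no further work.

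For the first inequality I would invoke Lemma~\ref{previo}(i) with the data above: the three hypotheses $s(w) < t(v_1)$, $\mathcal{M}_{even} = \emptyset$, and the parity ``$n$ even'' are all in place, so the lemma yields $t(p_{n-2}) \leq t(\gamma_{n-2})$. Since $t(w) = t(p_{n-2})$ by definition of the support of the concatenation, we obtain $t(w) \leq t(\gamma_{n-2})$, and combined with the previous step this produces the required chain $t(w) \leq t(\gamma_{n-2}) \leq t(v_{n-1})$.

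I do not expect any deep mathematical obstacle; the whole task is careful bookkeeping. The only points one must watch are that the parities match so that Lemma~\ref{previo}(i) triggers the ``$n$ even'' branch yielding $t(p_{n-2}) \leq t(\gamma_{n-2})$ rather than the ``$n$ odd'' branch, and that the localisation of $w$ as a subpath of $T$, implicit in the hypothesis $s(v_1) \leq s(w) < t(v_1)$, is used to rewrite $v_1 \cdots v_n = awb$ so that the previous lemma literally applies.
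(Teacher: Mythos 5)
Your proof is correct and follows essentially the same route as the paper: write $w = w(p_1,\dots,p_{n-2})$, apply Lemma~\ref{previo}(i) in the even case to get $t(w)=t(p_{n-2}) \leq t(\gamma_{n-2})$, and use the defining property of $\gamma_{n-2}$ (it divides $v_{n-2}v_{n-1}$ with $t(\gamma_{n-2}) \leq t(v_{n-1})$) for the second inequality. Your extra remarks on parities and on locating $w$ inside $T$ so that $v_1\cdots v_n = awb$ simply make explicit what the paper leaves implicit.
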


\begin{proof} If    $w = w(p_1,\dots,p_{n-2})$, from Lemma \ref{previo}(i)
we have that  $t(p_{n-2}) \leq t(\gamma_{n-2})$. Hence  $t(w) =
t(p_{n-2}) \leq t(\gamma_{n-2}) \leq t(v_{n-1}).$
\end{proof}

\begin{prop}\label{par}  Let $n$ even and  let  $(v_1,\dots,v_n) \in  \P^n$ be a well-concatenated $n$-sequence. If
$\mathcal{M}_{odd}= \emptyset $ and there exists  $w =
w(p_1,\dots,p_{n-2}) \in AP_{n-1}$ with  $ v_1\dots v_n = awb $, $a, b \in \P$, then there exists $z \in AP_n$ such that  $z$
divides the path $v_1\dots v_n$ and  $w \in \Sub(z)$.
\end{prop}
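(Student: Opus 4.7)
The plan is to construct $z \in AP_n$ explicitly, using the associated sequence $(\gamma_1, \gamma_3, \dots, \gamma_{n-1})$ supplied by $\mathcal{M}_{odd} = \emptyset$, where each $\gamma_{2j-1}$ divides $v_{2j-1}v_{2j}$ and straddles their boundary. Setting $T = v_1 \dots v_n$, from $a, b \in \P$ I first record the two boundary inequalities $s(w) < t(\gamma_1)$ and $s(\gamma_{n-1}) < t(w)$: otherwise $a$ would contain $\gamma_1$, or $b$ would contain $\gamma_{n-1}$, contradicting $a, b \in \P$.

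My first attempt is to extend the concatenation of $w$ on the right. I check whether the set $L_{n-1} = \{r \in \R(T) : t(p_{n-3}) \leq s(r) < t(p_{n-2})\}$ is nonempty; if so, choosing $r$ with minimal source gives $z = z(p_1, \dots, p_{n-2}, r) \in AP_n$. The minimality of relations in $\R$ (which makes the order of sources in $\R(T)$ agree with the order of targets) combined with $s(r) \geq t(p_{n-3}) > s(p_{n-2})$ forces $t(r) > t(p_{n-2})$, so $w$ sits as a prefix of the support of $z$, and hence $w \in \Sub(z)$. A natural witness for $L_{n-1} \neq \emptyset$ is $\gamma_{n-1}$: its upper bound $s(\gamma_{n-1}) < t(p_{n-2})$ is the second boundary inequality above, while the lower bound $t(p_{n-3}) \leq s(\gamma_{n-1})$ would be obtained by an inductive comparison of the $p_j$'s with the $\gamma_{2j-1}$'s, in the spirit of Lemma \ref{previo} but adapted to the present $\mathcal{M}_{odd} = \emptyset$ hypothesis.

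When this direct extension fails, the construction must be subtler: $z$ need not contain $(p_1, \dots, p_{n-2})$ as a sub-concatenation, only the path $w$ as a subpath. In this case I would build $z$ by choosing as its first relation the $r_1 \in \R(T)$ with $s(r_1)$ maximal subject to $s(r_1) < s(p_1)$ (such an $r_1$ is supplied by $\gamma_1$ in the subcase $s(w) < t(v_1)$, since $s(\gamma_1) < t(v_1)$); by maximality no other relation of $\R(T)$ has source in $(s(r_1), s(p_1))$, so the greedy process forces $r_2 = p_1$ and one continues from there. The dual subcase $s(w) \geq t(v_1)$ is handled symmetrically via the $w^{op}$ presentation, prepending a suitable relation on the right. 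The main obstacle is verifying, in full generality, that this process terminates in exactly $n-1$ relations producing a valid $n$-concatenation whose support still contains $w$ as a subpath; this requires a delicate sub-case analysis, invoking bounds analogous to those of Lemmas \ref{previo} and \ref{par1} together with the minimality of $\R$ to control the positions of the $p_j$'s, the $r_j$'s, and the $\gamma_{2j-1}$'s.
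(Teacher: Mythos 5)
Your easy case is sound and coincides with the paper's first case: when the window $\{r\in\R(T): t(p_{n-3})\leq s(r)<t(p_{n-2})\}$ is nonempty, appending the relation of minimal source does give $z\in AP_n$ with $w\in\Sub(z)$ (and your minimality argument for $t(r)>t(p_{n-2})$ is correct). The gap is the complementary case, which is the actual content of the proposition. First, the inequality $t(p_{n-3})\leq s(\gamma_{n-1})$ that you hope to extract ``in the spirit of Lemma \ref{previo}'' is not available: Lemma \ref{previo}(i) requires $\mathcal{M}_{even}=\emptyset$ and $s(w)<t(v_1)$, neither of which is assumed here, and the paper's proof splits into two cases precisely because $s(\gamma_{n-1})<t(p_{n-3})$ can occur. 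Second, your fallback is only a plan, and the parts you do state are shaky: the subcase in which $\gamma_1$ is guaranteed to start strictly to the left of $s(w)$ is $t(v_1)\leq s(w)$ (since $s(\gamma_1)<t(v_1)$), not $s(w)<t(v_1)$ as you write; and even granting $r_2=p_1$, the later greedy windows are $[t(r_{i-2}),t(r_{i-1}))$, which do not coincide with the windows used to build $w$, so there is no reason the process ``continues from there'': the windows can fail to contain $p_{i}$, can a priori be empty, and nothing in your argument controls how far the new concatenation reaches, i.e.\ that after exactly $n-1$ relations its support has passed $t(w)$.

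This missing control is exactly what the paper supplies in its second case: it does not take the relation of maximal source left of $s(p_1)$, but defines auxiliary relations $p'_{2i-1}$ by a backward recursion anchored at $p'_{n-1}=\gamma_{n-1}$ (maximal targets) and $p''_{2i-1}$ (minimal sources), proves they exist and are correctly located (Lemmas \ref{const1} and \ref{const2}, inequalities \eqref{eI}--\eqref{eIII}), and then builds $z=z(z_1,\dots,z_{n-1})$ with $z_1=p'_1$, proving by induction the sandwich bounds \eqref{d11}--\eqref{d12}; these bounds are what show each window $L_i$ is nonempty (with $p_j$ or $p'_{j+1}$ as witness) and, at the end, that $s(z)<s(w)$ and $t(w)\leq t(z)$, i.e.\ $w\in\Sub(z)$. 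Your proposal acknowledges this verification as ``the main obstacle'' and defers it; since no substitute for the $p'$, $p''$ bookkeeping (or any other mechanism guaranteeing termination in $n-1$ steps with coverage of $w$) is given, the proof is incomplete at its central point.
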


We need  some preliminary lemmas to prove this last result.
First, we define a set of relations  $\{
p'_{2i-1}, p''_{2i-1}\}_{1 \leq 2i-1 \leq n-3}$ satisfying:
\begin{itemize}
\item [1)]  $p'_{n-1} = \gamma_{n-1}$ and $p_{2i-1}'$ is an element in the set
$ T_{2i-1} = \{\gamma \in \R(T): t(\gamma) \leq
s(p'_{2i+1})\}$ such that    $t(p_{2i-1}')$ is maximal with respect to all
$\gamma \in T_{2i-1}$;
\item [2)]
$ p_{2i-1}''$ is an element in the set
$ S_{2i-1} = \{\gamma\in
\R(T): s(p_{2i-1}') < s(\gamma)\}$ such that  $s(p''_{2i-1})$
 is minimal with respect to all $\gamma \in S_{2i-1}.$
\end{itemize}
In addition to the assumptions in Proposition \ref{par}, we assume that  $s(\gamma_{n-1}) <
t(p_{n-3})$. Under these conditions we can prove the following  two lemmas. 

\begin{lemma} \label{const1}The sets $T_{2i-1}$ are not empty, more precisely, $\gamma_{2i-1} \in T_{2i-1}$, and the relations $p'_{2i-1}$ satisfy
\begin{equation}
s(\gamma_{2i-1}) \leq s(p'_{2i-1}) < t(p_{2i-3}) \label{eI}
\end{equation} for  $2i-1=1,  \dots, n-3$, where $t(p_{-1}):= s(p_1)$.
\end{lemma}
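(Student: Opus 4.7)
The plan is to prove the statement by descending induction on the odd index $2i-1$, starting from $n-3$ and going down to $1$. The base case is $2i-1 = n-3$, where by convention $s(p'_{n-1}) = s(\gamma_{n-1})$ plays the role of the ``seed'', and the auxiliary assumption $s(\gamma_{n-1}) < t(p_{n-3})$ kick-starts the argument. Throughout, the two key structural ingredients are (a) the relations in $\R$ are minimal, so none of them is a proper subpath of another, and (b) from the definition of the concatenation $w(p_1,\dots,p_{n-2})$, namely $t(p_{k-1})\le s(p_{k+1}) < t(p_k)$, one deduces $t(p_k) > t(p_{k-1})$ for all admissible $k$.

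First I would handle the nonemptiness and the lower bound. Because $\mathcal{M}_{odd}=\emptyset$, each $\gamma_{2j-1}$ divides $v_{2j-1}v_{2j}$, so $t(\gamma_{2j-1})\le t(v_{2j}) = s(v_{2j+1})\le s(\gamma_{2j+1})$. In particular, chaining with the inductive hypothesis $s(\gamma_{2i+1})\le s(p'_{2i+1})$ (which is vacuous at the base case), I get $t(\gamma_{2i-1})\le s(p'_{2i+1})$, hence $\gamma_{2i-1}\in T_{2i-1}$. Then, by the maximality of $t(p'_{2i-1})$ among elements of $T_{2i-1}$, one has $t(\gamma_{2i-1})\le t(p'_{2i-1})$. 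If additionally $s(p'_{2i-1}) < s(\gamma_{2i-1})$ held, then $\gamma_{2i-1}$ would be a proper subpath of $p'_{2i-1}$, contradicting minimality of $\R$; therefore $s(\gamma_{2i-1})\le s(p'_{2i-1})$.

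The strict upper bound $s(p'_{2i-1}) < t(p_{2i-3})$ is the main obstacle, and I would argue by contradiction. Suppose $s(p'_{2i-1})\ge t(p_{2i-3})$. Combined with $t(p'_{2i-1})\le s(p'_{2i+1}) < t(p_{2i-1})$ (the last inequality is either the given assumption at the base, or the inductive hypothesis), I split into two sub-cases. If $s(p'_{2i-1}) < t(p_{2i-2})$, then $p'_{2i-1}\in L_{2i-1}$, and by minimality of $s(p_{2i-1})$ in $L_{2i-1}$ we get $s(p_{2i-1})\le s(p'_{2i-1})$, hence $p'_{2i-1}$ is a proper subpath of $p_{2i-1}$ (proper because $t(p'_{2i-1}) < t(p_{2i-1})$). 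If instead $s(p'_{2i-1})\ge t(p_{2i-2})$, then using $s(p_{2i-1}) < t(p_{2i-2})$ from the concatenation, one obtains $s(p_{2i-1}) < s(p'_{2i-1}) < t(p'_{2i-1}) < t(p_{2i-1})$, again making $p'_{2i-1}$ a proper subpath of $p_{2i-1}$. Either way, minimality of $\R$ is violated.

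The main technical obstacle is the split upper-bound argument, where one must verify that $t(p_{2i-1})>t(p_{2i-2})$ holds (so that the ``interior'' sub-case really gives containment inside $p_{2i-1}$). This follows cleanly from the defining condition $t(p_{2i-2})\le s(p_{2i}) < t(p_{2i-1})$ of the concatenation, and is the only piece of the combinatorics of $AP_n$ beyond minimality that the argument requires.
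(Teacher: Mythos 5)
Your proof is correct and follows essentially the same route as the paper: a descending induction in which nonemptiness of $T_{2i-1}$ comes from $\gamma_{2i-1}$, the lower bound from the maximality of $t(p'_{2i-1})$ together with incomparability of relations in $\R$, and the upper bound from the minimality of $s(p_{2i-1})$ in its defining set (the paper phrases your contradiction step directly, by first deducing $s(p'_{2i-1})<s(p_{2i-1})$ and then excluding $p'_{2i-1}$ from that set). The only cosmetic caveat is the edge index $2i-1=1$, where your case split refers to the nonexistent $p_{0}$; there the contradiction hypothesis $s(p'_1)\ge s(p_1)$ together with $t(p'_1)<t(p_1)$ already gives the proper-subpath contradiction, consistent with the convention $t(p_{-1})=s(p_1)$.
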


\begin{proof}
By construction  $t(\gamma_{n-3})
< s(\gamma_{n-1})$ and, since  $p'_{n-1}=\gamma_{n-1}$  we have that  if  $2i-1=n-3$ then
$\gamma_{n-3} \in T_{n-3}$  and hence  $p'_{n-3}$ exists. So, the  maximality of  $t(p'_{n-3})$,  the fact that
$\gamma_{n-3} \in T_{n-3}$  and the assumption $s(\gamma_{n-1}) =
s(p_{n-1}' ) < t(p_{n-3})$, imply that
\begin{equation*} t(\gamma_{n-3}) \leq t(p_{n-3}') \leq s(p_{n-1}') < t(p_{n-3})
\end{equation*}
and  hence  \begin{equation}s(\gamma_{n-3}) \leq s(p_{n-3}') < s(p_{n-3}). \label{contrad} \end{equation}
From the construction of $w$  we know that  $p_{n-3}$ is such that $s(p_{n-3})$ is minimal with respect to all the relations in the set  $
\{ \gamma  \in \R(T):  t(p_{n-5}) \leq s(\gamma) < t(p_{n-4}) \} $.
So, from \eqref{contrad} we get that $p'_{n-3}$ does not belong to the previous set and hence $s(p'_{n-3}) < t(p_{n-5})$. Then $s(\gamma_{n-3}) \leq s(p_{n-3}')  <
t(p_{n-5})$. 

By induction hypothesis, suppose there exists
$p'_{n-2i-1}$ verifying  \eqref{eI}. Now we shall find  $p_{n-2i-3}'$.
From  \eqref{eI},   $s(\gamma_{n-2i -1})   \leq s(p_{n-2i-1}')$ and  since
$t(\gamma_{n-2i-3}) < s(\gamma_{n-2i-1}) $,  $\gamma_{n-2i-3} \in
T_{n-2i-3}$ and this shows the existence of  $p_{n-2i-3}'$. From the  the maximality of $t(p_{n-2i-3}')$ in 
$T_{n-2i-3}$ we have that  $t(\gamma_{n-2i-3}) \leq t(p_{n-2i-3}')$ and 
with  \eqref{eI} we get that 
\begin{equation*}t(\gamma_{n-2i-3}) \leq t(p_{n-2i-3}') \leq
s(p'_{n-2i-1}) < t(p_{n-2i-3}) \end{equation*} therefore
\begin{equation}s(\gamma_{n-2i-3}) \leq s(p_{n-2i-3}')  <
s(p_{n-2i-3})\label{d8}.\end{equation} From the construction of $w$  we know that $p_{n-2i-3}$ is such that 
$s(p_{n-2i-3})$  is minimal with respect to all the relations in $\{\gamma \in \R(T)
: t(p_{n-2i-5}) \leq s(\gamma) < t(p_{n-2i-4}) \}$, where $t(p_{n-2i-5})$ should be replaced by $s(p_1)$ when $n-2i-4=1$.  Then from \eqref{d8} we get that $s(p_{n-2i-3}') < t(p_{n-2i-5})$ and hence 
$s(\gamma_{n-2i-3}) \leq s(p_{n-2i-3}') < t(p_{n-2i-5}).$
\end{proof}

\begin{lemma} \label{const2}The sets $S_{2i-1}$  are not empty, more precisely, $p_{2i-1} \in S_{2i-1}$,  and the relations $p''_{2i-1}$ satisfy
\begin{align}
 s(p_{2i+1}')   & <   t(p_{2i-1}'')   \leq   t(p_{2i-1}), \label{eII} \\
 t(p_{2i-1}') & \leq s(p_{2i+1}')   <
t(p_{ 2i-1}''),  \label{eIII}
\end{align}
for $2i-1= 1 , \dots, n-3$.
\end{lemma}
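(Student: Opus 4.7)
The plan is to treat the three assertions in order, modeled closely on the proof of Lemma \ref{const1}.

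First I would check that $p_{2i-1}\in S_{2i-1}$, which simultaneously shows non-emptiness and that $p''_{2i-1}$ is well-defined. By \eqref{eI} we have $s(p'_{2i-1})<t(p_{2i-3})$ (using the convention $t(p_{-1}):=s(p_1)$ to handle $2i-1=1$), while the construction of $w=w(p_1,\dots,p_{n-2})$ forces $s(p_{2i-1})\geq t(p_{2i-3})$. Hence $s(p'_{2i-1})<s(p_{2i-1})$, so $p_{2i-1}\in S_{2i-1}$.

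Next I would establish the upper bound $t(p''_{2i-1})\leq t(p_{2i-1})$ in \eqref{eII}. By the minimality of $s(p''_{2i-1})$ in $S_{2i-1}$, we have $s(p''_{2i-1})\leq s(p_{2i-1})$. If the sources coincide, then $p''_{2i-1}$ and $p_{2i-1}$ are two elements of $\R$ with the same source on $T$, so they are equal (two minimal relations starting at the same point of $T$ cannot differ). Otherwise $s(p''_{2i-1})<s(p_{2i-1})$, and if $t(p''_{2i-1})>t(p_{2i-1})$ held, then $p_{2i-1}$ would be a proper subpath of $p''_{2i-1}\in\R$, contradicting the minimal length property of elements of $\R$.

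The central step is the strict inequality $s(p'_{2i+1})<t(p''_{2i-1})$, which completes \eqref{eII} and supplies the right half of \eqref{eIII}. I would argue by contradiction: assume $t(p''_{2i-1})\leq s(p'_{2i+1})$. Then $p''_{2i-1}\in T_{2i-1}$, and the maximality of $t(p'_{2i-1})$ yields $t(p''_{2i-1})\leq t(p'_{2i-1})$. Combined with $s(p'_{2i-1})<s(p''_{2i-1})$, which holds by the very definition of $S_{2i-1}$, this would make $p''_{2i-1}$ a proper subpath of $p'_{2i-1}$, again contradicting the fact that both lie in the minimal generating set $\R$. The remaining inequality $t(p'_{2i-1})\leq s(p'_{2i+1})$ in \eqref{eIII} is immediate from the defining condition of $T_{2i-1}$.

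The main obstacle throughout is really the same observation, used twice: two distinct minimal relations on $T$ cannot have one properly containing the other. Once this fact is invoked in the second and third steps, all three inequalities drop out from the explicit maximality/minimality conditions defining $p'_{2i-1}$ and $p''_{2i-1}$, together with the bound $s(p'_{2i-1})<t(p_{2i-3})$ already supplied by Lemma \ref{const1}.
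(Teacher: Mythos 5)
Your proof is correct and follows essentially the same route as the paper: you show $p_{2i-1}\in S_{2i-1}$ via \eqref{eI} and the construction of $w$, deduce $t(p''_{2i-1})\leq t(p_{2i-1})$ from the minimality of $s(p''_{2i-1})$, obtain $s(p'_{2i+1})<t(p''_{2i-1})$ by the same contradiction with the maximality of $t(p'_{2i-1})$ in $T_{2i-1}$, and read off $t(p'_{2i-1})\leq s(p'_{2i+1})$ from the definition of $T_{2i-1}$. The only difference is that you spell out explicitly the ``no relation in $\R$ properly contains another'' fact that the paper leaves implicit in its ``therefore'' steps.
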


\begin{proof}
 It is clear that $S_{2i-1} \neq
\emptyset$ because from   \eqref{eI}, $s(p_{2i-1}') < t(p_{2i-3})$ and by
construction of  $w$, $t(p_{2i-3}) \leq s(p_{2i-1})$, so
  $p_{2i-1} \in S_{2i-1}$. From the minimality of $s(p_{2i-1}'')$ in $S_{2i-1}$ we have that 
$s(p_{2i-1}'') \leq s(p_{2i-1})$  and therefore  $t(p_{2i-1}'')
\leq t(p_{2i-1}).$ Also $s(p'_{2i+1}) < t(p''_{2i-1})$,
because if  $t(p''_{2i-1}) \leq s(p'_{2i+1})$ then the maximality of $t(p'_{2i-1})$ in  $T_{2i-1}$ says  that $t(p''_{2i-1})
\leq t(p'_{2i-1})$   and therefore $s(p''_{2i-1}) \leq s(p'_{2i-1})$, and
this contradicts  the definition of  $ p''_{2i-1}$. Then we have that 
$ s(p_{2i+1}') < t(p_{2i-1}'') \leq t(p_{2i-1}). $ Finally, from the definition of  $p_{2i-1}'$ and   the previous inequality 
 we get that 
$ t(p_{2i-1}')\leq s(p'_{2i+1}) < t(p_{2i-1}'').$
\end{proof}

\begin{remark}\label{z2} The relations $p_1', p_1''$ satisfy
$w(p_1',p_1'') \in AP_3$.
\end{remark}

\begin{proof}[Proof  of Proposition \ref{par}]
We assume first that
$t(p_{n-3}) \leq   s(\gamma_{n-1})$. By hypothesis  $v_1 \dots v_n = awb  $ with  $b \in \P$
then  $s(\gamma_{n-1})< t(p_{n-2})$
\[ \xymatrix{  \ar@<-3ex>[rrr]^{p_{n-3}} &   & & &  \ar@<-1ex>[rrr]^{\gamma_{n-1}} &  & & &  \\
& \ar@{-|}[rrrrrrr]^(0.3){p_{n-2}}|(0.6){>}^(0.8)b  & & & & & & & &
. }
\] So, the set $L =       \{\gamma \in \R(T): t(p_{n-3})\leq s(\gamma) <
 t(p_{n-2})\}$ is not empty because  $\gamma_{n-1} \in L$. If $\delta$ is such that $s(\delta)$ is minimal with respect to all the  relations in the set $L$,
we have  that $w = w(p_1,\dots , p_{n-2}, \delta)  \in AP_n$
satisfies the desired  condition.

Assume now that $s(\gamma_{n-1}) < t(p_{n-3}). $ We will construct $z=z(z_1, \dots, z_{n-1}) \in AP_n$ with  $z_1=p'_1$.  From Remark \ref{z2} we have that  $z_2 =
p_1''$ and,  by  (\ref{eII}),  $z_2$ satisfies the inequality
$t(p''_{1})= t(z_{2}) \leq t(p_{1}).$ 

We have to see that the set
$L_i =  \{\gamma \in \R(T) : t(z_{i-2})\leq
s(\gamma) <  t(z_{i-1})\}$  is not empty for $i= 3, \dots, n-1$ and then we will choose $z_i \in L_i$ such that $s(z_i)$ is minimal with respect to all $\gamma \in L_i$. From 
\eqref{eIII} we have that  $p_3' \in L_3$, then $L_3 \neq \emptyset$
and the minimality of  $s(z_3)$ says that  $s(z_3) \leq s(p_3')$, 
and therefore $ t(z_3) \leq t(p_3').$ So we have that 
\begin{equation}t(p_2) \leq t(z_3) \leq t(p_3')  \end{equation}
where the first inequality follows  from the fact that $a \in \P$,  so
$s(p_1) < t(\gamma_1)$, and by  \eqref{eI}, $s(\gamma_1) \leq s(p_1')$
so $t(\gamma_1) \leq t(p_1')$. Then $s(p_1) < t(p_1')
= t(z_1) \leq s(z_3) $ and finally the construction of $p_2$  says that  $
s(p_2) \leq s(z_3)$,  so   $ t(p_2) \leq  t(z_3)$.

By inductive hypothesis we assume that there exist  $z_j$ such that 
\begin{align}   
t(p_{j-1}) & \leq   t(z_{j})   \leq   t(p'_{{j}}) \quad &\mbox{if  $j$ is odd,} \label{d11}\\
t(p''_{j-1})  & \leq  t(z_{j})   \leq   t(p_{j-1})  &\mbox{if  $j$ is even}
\label{d12}
\end{align}
and now we shall find $z_{j+1}$ for  $j+1=2i$ and for  $j+1=2i+1$.  

If  $j+1=2i$, 
the  construction of $w$ says that   $t(p_{j-2})  \leq s(p_{j}) <
t(p_{j-1})$, and by  \eqref{d11} and  \eqref{d12} we have that 
$t(z_{j-1})\leq s(p_{j}) < t(z_{j})$. So $p_{j} \in
L_{j+1}$ and this shows the existence of $z_{j+1}$. The minimality of  $s(z_{j+1})$ says that  $s(z_{j+1}) \leq
s(p_{j})$ and  therefore 
\begin{equation} t(z_{j+1})\leq t(p_{j}).\label{d14}\end{equation}
By   \eqref{eII} and   \eqref{d12} we have that $s(p_{j}')< t(z_{j-1}) \leq
s(z_{j+1})$, then the minimality of  $s(p_{j}'')$ in 
$S_{j}$ says that 
$s(p_{j}'') \leq s(z_{j+1})$ and so  $ t(p_{j}'') \leq t(z_{j+1}).$
 This last inequality with \eqref{d14} says that 
$t(p_{j}'') \leq t(z_{j+1})\leq t(p_{j}) $. 

If $j+1=2i+1$, by   \eqref{eIII},  \eqref{d11} and  \eqref{d12} 
we have that  $t(z_{j-1}) \leq t(p_{j-1}') \leq s(p_{j+1}') <
t(p_{j-1}'') \leq t(z_{j}),$
then   $p_{j+1}' \in L_{j+1} $  and this shows the existence of $z_{j+1}$. By  the minimality of  $s(z_{j+1})$ we have that   $s(z_{j+1}) \leq s(p_{j+1}')$, and  therefore 
$ t(p_{j}) \leq t(z_{j+1}) \leq t(p_{j+1}') $, 
where the first inequality follows  from  the fact that  $s(p_{j})$  is minimal with respect to all relations in
$\{\gamma \in \R(T):  t(p_{j-2}) \leq s(\gamma) <
t(p_{j-1}) \}$  and $z_{j+1}$ belongs to this set.

In this way we have constructed an element
$ z = z(z_1, z_2, \dots, z_{n-1}) \in AP_n$ with $z_{j}$ satisfying \eqref{d11} and  \eqref{d12} respectively.

To finish the proof we must verify that  $w \in \Sub(z)$.
For this, it suffices to show that $s(z) \leq s(w) \mbox{ and  } t(w)
\leq t(z).$ By  \eqref{eI}, $s(z) = s(z_1) = s(p_1') < t(p_{-1}) =
s(p_1) = s(w)$, so we get the first inequality. By 
\eqref{d11}, $t(p_{n-2}) \leq t(z_{n-1}) $ and therefore    $t(w)
\leq t(z).$
\end{proof}
\bigskip

Now, we give some remarks that will make the computation of the map $\mathbf G$ easier to approach.
For any  well-concatenated  $n$-sequence $(v_1, \dots ,v_{n}) \in \P^n$ we define,  if they exist,  \[j_0 = \Min(\mathcal{M}_{odd}), \ j_1
= \Max(\mathcal{M}_{odd})\]  \[i_0 =  \Min(\mathcal{M}_{even}) ,\  i_1 =
\Max(\mathcal{M}_{even}).
\]

\begin{remark}\label{conjuntos} Using the sets $ \mathcal{M}_{odd}, \mathcal{M}_{even}$ we can cancel terms in the sum
\begin{align*} G_{n-1}\circ b_n (1 & \otimes v_1  \otimes \dots \otimes v_n\otimes 1)  
=   G_{n-1}(v_1\otimes v_2\otimes \dots \otimes v_n\otimes 1) \\
& +  \sum_{ \{ j \ : \  2 \ \leq \  2j \  <  \ n \}
}G_{n-1}(1\otimes
v_1\otimes \dots \otimes v_{2j}v_{2j+1}\otimes \dots \otimes v_n\otimes 1) \\
 & -  \sum_{ \{ j \ : \ 1 \ \leq  \ 2j-1 \ <  \ n \}
}G_{n-1}(1\otimes
v_1\otimes \dots \otimes v_{2j-1}v_{2j}\otimes \dots \otimes v_n\otimes 1) \\
 & +  (-1)^nG_{n-1}(1\otimes v_1\otimes \dots \otimes v_n).
\end{align*} 
\begin{itemize}
\item[1)]  If   $ \mathcal{M}_{even} = \emptyset $ and  $\mathcal{M}_{odd} = \emptyset
$ then  
\begin{align*} G_{n-1}\circ b_n (1 \otimes v_1
\otimes \dots \otimes v_n\otimes 1)   =  & G_{n-1}(v_1\otimes
v_2\otimes \dots \otimes v_n\otimes 1) \\ & +    (-1)^nG_{n-1}(1\otimes
v_1\otimes  \dots \otimes v_n),
\end{align*} because   $v_{j}v_{j+1} \in I$ for any  $j$.
\item[2)] If  $\mathcal{M}_{odd} = \emptyset$ and $\mathcal{M}_{even} \neq
\emptyset$ then  \begin{align*}G_{n-1}\circ b_n (1 \otimes & v_1
\otimes \dots \otimes v_n\otimes 1)  =  G_{n-1}(v_1\otimes
v_2\otimes \dots \otimes v_n\otimes 1) \\  & +    \sum_{ \{ j: \
2i_0 \leq 2j \leq 2i_1 \} }G_{n-1}(1\otimes
v_1\otimes \dots \otimes v_{2j}v_{2j+1}\otimes \dots \otimes v_n\otimes 1) \\
&   +  (-1)^nG_{n-1}(1\otimes v_1\otimes \dots \otimes v_n),
\end{align*} because   $v_{2j-1}v_{2j} \in I$ for any  $j$ and  $v_{2j}v_{2j+1} \in
I$ if   $j< i_0$ or  $j> i_1$.
\item[3)] If  $\mathcal{M}_{even} =
\emptyset$ and  $\mathcal{M}_{odd} \neq  \emptyset$ then 
\begin{align*}G_{n-1}\circ b_n (1 \otimes & v_1 \otimes \dots \otimes
v_n\otimes 1) =  G_{n-1}(v_1\otimes v_2\otimes \dots \otimes
v_n\otimes 1)  \\& -  \sum_{ \{ j: \ 2j_0 \leq 2j \leq 2j_1
\} }G_{n-1}(1\otimes
v_1\otimes \dots \otimes v_{2j-1}v_{2j}\otimes \dots \otimes v_n\otimes 1) \\
&  +   (-1)^nG_{n-1}(1\otimes v_1\otimes \dots \otimes v_n),
\end{align*} because   $v_{2j}v_{2j+1} \in I$ for any  $j$ and  $v_{2j-1}v_{2j} \in
I$ if  $j< j_0$ or  $j> j_1$.
\item[4)] If  $\mathcal{M}_{even} \neq
\emptyset$ and  $\mathcal{M}_{odd} \neq  \emptyset$ then 
\begin{align*} G_{n-1}\circ b_n (1 \otimes & v_1 \otimes \dots \otimes
v_n\otimes 1)   =   G_{n-1}(v_1\otimes v_2\otimes \dots \otimes
v_n\otimes 1)  \\ & +  \sum_{ \{ j: \  2i_0 \leq 2j \leq 2i_1
\} }G_{n-1}(1\otimes
v_1\otimes \dots \otimes v_{2j}v_{2j+1}\otimes \dots \otimes v_n\otimes 1) \\
 & -  \sum_{ \{ j: \ 2j_0 \leq 2j \leq 2j_1  \}
}G_{n-1}(1\otimes
v_1\otimes \dots \otimes v_{2j-1}v_{2j}\otimes \dots \otimes v_n\otimes 1)
\\  & +  (-1)^nG_{n-1}(1\otimes v_1\otimes \dots \otimes v_n),
\end{align*} because   $v_{2j}v_{2j+1} \in
I$ if   $j< i_0$ or  $j> i_1$ and  $v_{2j-1}v_{2j} \in I$
if  $j< j_0$ or  $j> j_1$.
\end{itemize}
\end{remark}

The following remark allow us to characterize the kernel of   $G_n$.
\begin{remark}\label{condicioncero} Let  $(v_1, \dots, v_n) \in \P^n$ be a  well-concatenated $n$-sequence.  Then  $$G_n(1 \otimes v_1
\otimes \dots \otimes v_n \otimes 1) = 0$$ if and only if one and
only one of the following conditions is satisfied:
\begin{itemize}
  \item[1)] $(v_1, \dots, v_n)$ is a  bad $n$-sequence,
  \item[2)] $(v_1, \dots, v_n)$ is a  good $n$-sequence  and   $\chi(v_1, \dots, v_n) = \emptyset$,
  \item[3)] $(v_1, \dots, v_n)$ is a  good $n$-sequence, $\chi(v_1, \dots, v_n) \neq
\emptyset$ and 
\begin{itemize}
 \item [i)] if  $n$ is even, $L(w_1) \mbox{ or } R(w_1) \in I$ for  $w_1 \in AP_n$ such that $s(w_1) = \Min\{s(w): w \in \chi(v_1,
\dots, v_n) \}$;
\item [ii)] if  $n$ is odd,  $L(w) \mbox{ or } R(w) \in I$  for any  $w \in \chi(v_1,
\dots, v_n) $ with  $s(v_1)\leq s(w) < t(v_1)$.
\end{itemize}
\end{itemize}
\end{remark}

\begin{remark}\label{conjuntos2}
\begin{itemize}
\item [ ]
  \item[1)]  If  $\mathcal{M}_{odd} \not = \emptyset$ then 
    $\sum_{ \{ j: \ 2j_0 \leq 2j \leq 2j_1 \} }G_{n-1}(1\otimes
v_1\otimes \dots \otimes v_{2j-1}v_{2j}\otimes \dots \otimes v_n\otimes 1)$
is equal to
\begin{align*}
G_{n-1}(1\otimes
v_1\otimes \dots \otimes v_{2j_0-1}v_{2j_0}\otimes \dots \otimes v_n\otimes 1) & \quad \mbox{if $n$ is odd},\\
G_{n-1}(1\otimes v_1\otimes \dots \otimes
v_{2j_1-1}v_{2j_1}\otimes \dots \otimes v_n\otimes 1) &  \quad \mbox{if $n$ is
even}
\end{align*}
because  $(v_1, \dots, v_{2j-1}v_{2j}, \dots, v_n)$ is a bad 
$(n-1)$-sequence if $j>j_0$ when  $n$ is odd since 
$v_{2j_0-1}v_{2j_0} \not \in I$, and if  $j<j_1$ when  $n$ is even 
since  $v_{2j_1-1}v_{2j_1} \not \in I$.  The same argument can be used to affirm that \begin{align*} G_{n-1}(1\otimes
v_1\otimes \dots \otimes v_n) = 0 & \quad  \mbox{if $n$ is odd, and }\\
G_{n-1}( v_1\otimes \dots \otimes v_n\otimes 1) = 0  & \quad \mbox{if $n$ is 
even and  $j_1 \neq 1$. }
\end{align*}
\item[2)]  Analogously if  $\mathcal{M}_{even} \not = \emptyset$ then 
    $\sum_{ \{ j: \ 2i_0 \leq 2j \leq 2i_1 \} }G_{n-1}(1\otimes
v_1\otimes \dots \otimes v_{2j}v_{2j+1}\otimes \dots \otimes v_n)$ is equal to 
\begin{align*}
G_{n-1}(1\otimes
v_1\otimes \dots \otimes v_{2i_0}v_{2i_0+1}\otimes \dots \otimes v_n\otimes 1) & \quad \mbox{if $n$ is even},\\
G_{n-1}(1\otimes
v_1\otimes \dots \otimes v_{2i_1}v_{2i_1+1}\otimes \dots \otimes v_n\otimes 1) & \quad \mbox{if  $n$ is odd}, \end{align*} and 
\begin{align*}G_{n-1}(1\otimes
v_1\otimes \dots \otimes v_n) = 0 & \quad  \mbox{if $n$ is even, }\\
G_{n-1}( v_1\otimes \dots \otimes v_n\otimes 1) = 0 & \quad  \mbox{if $n$ is
odd. }
\end{align*}
    \end{itemize}
\end{remark}

Now we will show that  $ {\mathbf G} : \Bar A   \to \Ap A$ is a comparison
morphism. 
It is clear that $ \mu \circ G_0   =
\varepsilon \circ \id_A$ and  $G_0 \circ b_1 = d_1 \circ G_1$ since 
\begin{equation*}
   d_1 \circ G_1(1\otimes v \otimes 1) = 0 =  G_0 \circ b_1(1\otimes
v \otimes 1) 
\end{equation*}
if $\vert v \vert =0$ and, if $ v = \alpha_1 \dots \alpha_s$,
\begin{align*}
 d_1 \circ G_1(1\otimes & \alpha_1 \dots \alpha_s \otimes 1) =
d_1(\sum_{i=1}^s\alpha_1 \dots \alpha_{i-1}\otimes \alpha_i\otimes
\alpha_{i+1} \dots \alpha_s) \\ 
   = & \sum_{i=1}^s
\alpha_1 \dots \alpha_{i-1} \alpha_i\otimes e_{t(\alpha_i)}\otimes
\alpha_{i+1} \dots \alpha_s \\
& - \sum_{i=1}^s\alpha_1 \dots \alpha_{i-1}\otimes
e_{s(\alpha_i)} \otimes \alpha_i \alpha_{i+1} \dots \alpha_s \\   = &
\alpha_1 \dots \alpha_s\otimes e_{t(\alpha_s)}\otimes 1  -  1 \otimes
e_{s(\alpha_1)}\otimes \alpha_1 \dots \alpha_s  = G_0 \circ b_1(1\otimes
v \otimes 1).
\end{align*}
For  $n > 1$,  the  proof of $d_n\circ G_n = G_{n-1}\circ b_n$
will be done in four   steps  taking into account Remarks \ref{conjuntos} and  \ref{conjuntos2}.

\subsubsection*{\underline {Case 1}:  $ \mathcal{M}_{even} (v_1, \dots, v_n)= \emptyset \mbox{  and } \mathcal{M}_{odd} (v_1, \dots, v_n) = \emptyset $.} In this case
\begin{align*} G_{n-1}\circ b_n (1 \otimes v_1
\otimes \dots \otimes v_n\otimes 1)   = &  G_{n-1}(v_1\otimes
v_2\otimes \dots \otimes v_n\otimes 1) \\ & +    (-1)^nG_{n-1}(1\otimes
v_1\otimes \dots \otimes v_n).
\end{align*}
(i) If $G_n (1 \otimes v_1 \otimes \dots \otimes v_n\otimes 1) =0$  we must show that  \begin{equation}
G_{n-1}(v_1
 \otimes v_2\otimes \dots \otimes v_n \otimes 1) =  -(-1)^n G_{n-1}( 1 \otimes v_1 \otimes  \dots \otimes
v_n). \tag{$\ast$}\end{equation}
Assume  $n$ is odd. First we observe that if  $G_{n-1}( 1 \otimes
v_1 \otimes \dots \otimes v_n) \neq 0$ then
\[  G_{n-1}( 1 \otimes v_1 \otimes \dots \otimes
v_n)= L(w)\otimes w\otimes L(w) ,\] with  $L(w) , R(w) \in \P$ and  $w
\in AP_{n-1}$.  If $ s(v_1) \leq s(w) < t(v_1)$, by  Lemma \ref{impar}(i) there exists $z \in AP_n$ with  $s(z) = s(w)$ and 
therefore  $ s(v_1) \leq s(z) < t(v_1)$ contradicting that
$G_n(1\otimes v_1\otimes \dots \otimes
   v_n\otimes 1) = 0$.
Then $t(v_1)= s(v_2) \leq s(w)$. So, in this case,  equality $(\ast)$ holds   since  $w \in \chi  (v_2, \dots, v_n)$.

Assume now that 
$$G_{n-1}( 1 \otimes v_1 \otimes
\dots \otimes v_n)  = 0   \mbox{ \   and  \ }    G_{n-1}(v_1
 \otimes v_2\otimes \dots \otimes v_n \otimes 1)   = L(w) \otimes w\otimes R(w)\neq 0 $$
with  $w \in \chi(v_2, \dots, v_{n})$ and  $s(w)$ minimal. Since $(v_1, \dots, v_{n-1}) $ is a good  $(n-1)$-sequence, the  first  equality says that we are in case  (2) or (3.i) of 
Remark \ref{condicioncero}.

In the first case, $\chi(v_1, \dots, v_{n-1}) = \emptyset$ and therefore   $ w \in
AP_{n-1}$ satisfies  $s(v_n) < t(w) \leq t(v_n)$. Then 
by Lemma \ref{impar}(ii) there exists $z \in AP_n$, with  $t(z) =
t(w)$ and  $w \in \Sub(z)$. If  $ t(v_1)\leq s(z)$, we have that there exists $w' \in \Sub(z)$ with 
$s(w') = s(z)$, and  this contradicts the minimality of $s(w)$.
If  $s(z) < t( v_1)$ we get a contradiction to  $G_n(1\otimes
v_1\otimes \dots \otimes
   v_n\otimes 1) =  0$.

In the second case  $\chi(v_1, \dots, v_{n-1}) \neq \emptyset$, and if 
$u \in AP_{n-1}$ is such that  $s(u)$  is minimal with respect to the elements in $\chi(v_1, \dots, v_{n-1})$ then $L(u)$ or  $R(u) \in I$. Observe that 
 $u$  and $w$ verify  that  $aw = ub$, with 
$a \in \P$ because  $a$ divides   $L(w)$:
\[ \xymatrix{  \ar@{|-|}[rrrr]^(.4){u}|(.8){|}^(.9){b}   \ar@<-3ex>@{|-|}[rrrr]^(.1){a}|(.2){|}^(.6){w} & & & & }  \]
Then,  by Lemma \ref{A3}(i), there exists  $z \in AP_n$ such that 
$t(z) = t(w)$: If $s(v_1) \leq s(z) < t(v_1) $, then  $G_n(1\otimes
v_1\otimes \dots \otimes v_n\otimes 1) \neq 0$, a contradiction; if  $s( v_2) \leq s(z)$ we have that there exists $w' \in \Sub(z) $ with  $s(w') =
s(z)$, and this contradicts the minimality of $s(w)$.\\

Now assume that $n$ is even. Since $(v_1, \dots, v_n)$ is a good   $n$-sequence,
we are  in case (2) or  (3.i) of Remark \ref{condicioncero}.

If $\chi(v_1, \dots ,v_n) =
\emptyset$
then $G_{n-1}(v_1 \otimes v_2 \otimes \dots
\otimes v_n \otimes 1) = 0$ and  $G_{n-1}(1 \otimes v_1 \otimes \dots
\otimes v_n) = 0$ because otherwise using Proposition \ref{par} we obtain a contradiction.

If  $ \chi(v_1, \dots
,v_n) \neq \emptyset$,  we know that  $L(w)  \in I$ or  $R(w)\in I$, where $w \in AP_n$ is such that  $s(w)$
 is minimal with respect to $\chi (v_1, \dots, v_n)$.
Suppose that    
\[G_{n-1}(v_1 \otimes v_2 \otimes \dots
\otimes v_n \otimes 1) \neq 0\quad \mbox{ or } \quad G_{n-1}(1 \otimes v_1
\otimes \dots \otimes v_n) \neq 0; \] then there must exists $u \in AP_{n-1}$
with  $v_1\dots v_n = L(u)uR(u)$, and  $L(u),R(u) \in \P$ and again 
using Proposition \ref{par}  we construct $z \in AP_n$ with  $u \in
\Sub(z)$. Observe that $w \neq z$ because $L(w)  \in I$ or  $R(w)\in
I$.  Since $s(w)$  is minimal,  then  $s(w) < s(z)$ and since $ L(w)$ divides  $ L(u)$, we have that $L(w) \in \P$. Then $R(w)
\in I$ and so  $t(w) < t(u)$
\[ \xymatrix{\ar@{-}[rrrrrrrrr]^{u}^(.2){L(u) \in \P}^(.9){R(u) \in
\P}|(.3){|}|(.8){|}
  \ar@{..}[dd] &  \ar@{..}[dd] & & & & &
& & & \ar@{..} [dd] \\  & & \ar@{|-|}[rrrrrr]^{z} & & & & & & &  \\
\ar@{-}[rrrrrrrrr]^{w}^(.06){L(w)}^(.8){R(w)}|(.11){|}|(.7){|} & & &
& & & & & & }
\]
Now we compare $w$ with $z$ and $w$ with $u$: 
since  $az = wb$  with  $a \in  \P$, because $a$ divides $L(u)
$, by  Lemma \ref{sh}  we have  that   $b \in \P$, \[ \xymatrix{ \ar@{|-|}[rrrr]^(.1){a}|(.2){|}^(.6){z} \ar@<-3ex>@{|-|}[rrrr]^(.4){w}|(.8){|}^(.9){b} &
& & &    }  \]
so, since $w b'= cu$ we have that   $b'
\in \P$ because   $b'$ divides $ b$.  Finally,  applying  Lemma \ref{A}(i) 
we get that 
$c \in I$,  a contradiction because   $c$ divides  $L(u)$ and $L(u)
\in \P$. \\
(ii)  Assume that  $G_n (1 \otimes v_1 \otimes \dots \otimes v_n\otimes 1) \not = 0$. 
If  $n$ is odd
$$G_n(1\otimes v_1\otimes \dots \otimes
   v_n\otimes 1) = \sum_{i=1}^m  a_i\otimes \zeta_i \otimes b_i \ , \mbox{  }  s(v_1) \leq s(\zeta_i) < t(v_1),$$
where  $\{\zeta_1, \dots ,\zeta_m\} \subseteq
  AP_n$ is an ordered  set with  $s(\zeta_i)<s(\zeta_j)$ if  $i<j$.
If  $\Sub(\zeta_i) = \{
  \psi_1^i,\psi_2^i\}$ by  Lemma \ref{iguales} we have that  $\psi_1^{i+1} = \psi_2^{i}$ for  $i= 1, \dots,  m-1$. Then 
 we can cancel terms in  $d_n\circ G_n$:
\begin{align*} d_n\circ G_n (1\otimes v_1\otimes \dots \otimes
   v_n\otimes 1) &  =   \sum_{i=1}^m a_iL(\psi_2^i)\otimes \psi_2^i \otimes b_i - \sum_{i=1}^m a_i\otimes \psi_1^i \otimes R(\psi_1^i)b_i \\
   & =   a_mL(\psi_2^m) \otimes
   \psi_2^m\otimes  b_m - a_1\otimes \psi_1^1\otimes
  R(\psi_1^1) b_1 .  \end{align*}
We shall prove  that 
\begin{equation}G_{n-1}( v_1 \otimes v_2
 \otimes \dots \otimes v_n \otimes 1)  =    a_mL(\psi_2^m) \otimes
   \psi_2^m\otimes  b_m , \quad \mbox{and}  \label{I} \end{equation}
\begin{equation} G_{n-1}(1 \otimes v_1\otimes \dots \otimes v_n)   =   a_1\otimes
\psi_1^1\otimes
  R(\psi_1^1) b_1 \label{II} . \end{equation}
Suppose first  that $a_mL(\psi_2^m)
\otimes
   \psi_2^m\otimes  b_m \neq 0,$ that is, $a_mL(\psi_2^m), b_m \in \P$. We will see that  $\psi_2^m$  is such that 
    $$s(\psi_2^m) = \Min\{s(w): w \in \chi(v_2, \dots, v_n)\}$$
and this proves equality \eqref{I}. If  $s(v_1) \leq s(\psi_2^m) < t(v_1)$ by  Lemma
\ref{impar}(i) there exists  $z \in AP_n$ with  $s(z) = s(\psi_2^m)$, and this contradicts the maximality of $s(\zeta_m)$, because  $s(\zeta_m) < s(\psi_2^m) = s(z)$. Then 
$t(v_1) \leq s(\psi_2^m)$ and  $\psi_2^m \in \chi(v_2, \dots,
v_n).$ It only remains to see that  $s(\psi_2^m)$ is minimal. If  $\delta \in
\chi(v_2, \dots, v_n) $ satisfies  $s(\delta)< s(\psi_2^m)$
\[ \xymatrix{
\ar@{|-|}[rrrr]^(.4){\delta}|(.8){|}^(.9){b} \ar@<-3ex>@{|-|}[rrrr]^(.1){{a \in \P}}|(.2){|}^(.6){\psi_2^m} & & & & }
\]
 by Lemma \ref{A3}(i), there exists   $z \in AP_n$ with  $t(z) = t(\psi^m_2)$. Hence 
 $z = \zeta_m$ and since $ t(v_1)\leq s(\delta) \leq  s(z)
= s(\zeta_m)$ we obtain a contradiction with $s(\zeta_m) <t(v_1)$. 

On the other hand, suppose that $a_mL(\psi_2^m)
\otimes
   \psi_2^m\otimes  b_m   =   0$ and $G_{n-1}( v_1 \otimes v_2
 \otimes \dots \otimes v_n \otimes 1) = a\otimes\delta \otimes b   \neq
 0$. Then we have two posibilities, that is, 
$s(\psi_2^m) < s(\delta)$ or  $s(\delta) < s(\psi_2^m)$:
\[ \xymatrix{  \ar@{|-|}[rrrr]^{\psi_2^m}
& \ar@<-3ex>@{|-|}[rrrr]^{\delta}  & & &  & \quad \mbox{ or } \quad
  \ar@{|-|}[rrrr]^{\delta} & \ar@<-3ex>@{|-|}[rrrr]^{\psi_2^m}  & & &
&  } \] and again,  by Lemma \ref{A3}(i) there exists 
$z \in AP_{n} $.  In the first case, $z$ satisfies  $t(z) = t(
\delta)$ and this contradicts the maximality of  $s(\zeta_m)$, because 
$s(\zeta_m) < s(\psi_2^m) \leq s(z)< t(v_1)$ (observe that 
if  $t(v_1)\leq s(z)$, taking  $w_1 \in \Sub(z)$ with $z = w_1R(w_1)$ should imply that
$s(\delta)$ is not minimum).  In the second case, $t(z) = t(
\psi_2^m)$ and this contradicts that $s(\zeta_m) < t(v_1)$, because it should be
$\zeta_m = z$ and  $t(v_1) \leq s(\delta) \leq s(z)$.

The proof for equality \eqref{II} is similar to that for \eqref{I}: first assume  that  $a_1\otimes
\psi_1^1\otimes
   R(\psi_1^1)b_1 \neq 0$, then  we use Lemmas  \ref{impar}(ii) and  \ref{A3}(ii)  to see that $s(\psi_1^1) = \Min\{s(w): w \in \chi(v_1, \dots, v_{n-1})\}.$ On the other hand,  if  $a_1\otimes \psi_1^1 \otimes
   R(\psi_1^1)b_1 =  0$   and  $G_{n-1}(1\otimes v_1 \otimes v_2
 \otimes \dots \otimes v_n ) = a \otimes \delta \otimes b  \neq   0$
we have that  $s(\delta) < s(\psi_1^1)$, and  by  Lemma
\ref{A3}(ii)  we obtain a contradiction with the minimality of 
$s(\zeta_1)$.   \\

If  $n$ is even we have that   $G_n(1 \otimes v_1 \otimes \dots
\otimes v_n \otimes 1) = a\otimes w \otimes b$
with  $w \in AP_n$, $s(w)$ minimal and  $a, b \in \P$. Then 
\begin{align*} d_n\circ G_n(1\otimes v_1\otimes \dots \otimes v_n
\otimes 1) & =   d_n(a\otimes w \otimes b)  =  \sum_{w_i \in
\Sub(w) } aL(w_i) \otimes w_i\otimes R(w_i)b.
\end{align*}
Since   $v_1v_2 \in I$, because   $\mathcal{M}_{odd} = \emptyset$,
we only have to consider
$w_i \in \Sub(w)$ with   $s(v_1) \leq s(w_i) < t(v_2) $.
Hence the above sum can be written as   \begin{align*}
\mathcal{J} =
\sum_{\substack{ w_i \in \Sub(w) \\ s(v_1) \leq s(w_i) < t(v_1) }}
aL(w_i) \otimes w_i\otimes R(w_i)b &  +   \sum_{\substack{ w_i \in
\Sub(w) \\ s(v_2) \leq s(w_i) < t(v_2)  }} aL(w_i) \otimes w_i\otimes
R(w_i)b
\end{align*}
and we must show that $\mathcal{J} =  G_{n-1}(
v_1\otimes v_2\otimes \dots \otimes v_n \otimes 1) + G_{n-1}(1 \otimes
v_1\otimes \dots \otimes v_n).$  If  $w_i \in \Sub(w)$ and $ s(v_1) \leq s(w_i) < t(v_1) $ from  Lemma \ref{par1} we deduce that $t(w_i) \leq s(v_n) = t(v_{n-1})$, then the desired equality will follow by the equalities 
\begin{align*}
\sum_{\substack{ w_i \in \Sub(w) \\ s(v_1) \leq s(w_i) < t(v_1) }}
aL(w_i) \otimes w_i\otimes R(w_i)b & =   G_{n-1}(1 \otimes
v_1\otimes \dots \otimes v_n), \quad \mbox{and}\\
  \sum_{\substack{ w_i \in
\Sub(w) \\ s(v_2) \leq s(w_i) < t(v_2)  }} aL(w_i) \otimes w_i\otimes
R(w_i)b &= G_{n-1}(
v_1\otimes v_2\otimes \dots \otimes v_n \otimes 1).
\end{align*}
It is clear that any term in the sums on the left hand corresponds to a
term in the sums on the right hand. Reciprocally, let $\delta \in AP_{n-1}$ with 
 $c \otimes \delta \otimes d$ be a term in the sums on the right hand,   and assume that
 $\delta \not \in \Sub(w)$, then   $s(w)
< s(\delta)$  or $s(\delta) < s(w)$:
\[ \xymatrix{  \ar@{-}[rrrr]^{w}|(.2){|}^(.1){a}|(.65){|}^(.75){b} \ar@<-3ex>@{-}[rrrr]^{\delta} |(.3){|}|(.75){|}  & &  &   & \quad \mbox{ or } \quad
  \ar@{-}[rrrr]^{w}|(.36){|}^(.25){a}|(.8){|}^(.9){b} \ar@<-3ex>@{-}[rrrr]^{\delta} |(.3){|}|(.75){|}  & &  &   & } \]  
In the first case we have that $ w b'=  c'\delta $, with  $b'$ a divisor of $
b$ and therefore $b' \in \P$. By Lemma  \ref{A}(i) we have that 
 $c' \in I $ and then   $c \in I $.  So $c \otimes \delta
\otimes d = 0$.
Similarly in the second case one sees that $d \in I$ using  Lemma
\ref{A2}(i).

\subsubsection*{\underline{Case 2}: $ \mathcal{M}_{even} (v_1, \dots, v_n)  \neq \emptyset $ and $\mathcal{M}_{odd} (v_1, \dots, v_n) = \emptyset $.}

(i) Assume that $G_n (1 \otimes v_1 \otimes \dots \otimes v_n\otimes 1) =0$.
If  $n$ is odd then 
\begin{align*} G_{n-1}\circ b_n(1\otimes v_1 \otimes \dots \otimes v_n \otimes 1)   =  & G_{n-1}(1 \otimes v_1 \otimes \dots \otimes v_{2i_1}v_{2i_1+1}
\otimes \dots \otimes v_n \otimes 1) \\ & -   G_{n-1}(1 \otimes
v_1\otimes \dots \otimes v_n)\end{align*} and hence we must show that 
\begin{equation} G_{n-1}(1 \otimes v_1 \otimes \dots \otimes
v_{2i_1}v_{2i_1+1} \otimes \dots \otimes v_n \otimes 1) = G_{n-1}(1
\otimes v_1\otimes \dots \otimes v_n).\tag{$\ast$}\end{equation} 
Since $\chi(v_1, \dots ,v_{n-1})\subseteq \chi(v_1, \dots ,v_{2i_1}.v_{2i_1+1}, \dots , v_n)$, 
clearly if $\chi(v_1, \dots ,v_{n-1})  \not = \emptyset$
we obtain the equality $(\ast)$.

If $\chi(v_1, \dots ,v_{n-1})   = \emptyset$   and $$G_{n-1}(1 \otimes
v_1 \otimes \dots \otimes v_{2i_1}v_{2i_1+1} \otimes \dots \otimes v_n
\otimes 1) =  L(w)\otimes w \otimes R(w) \neq 0,$$ with   $w \in
AP_{n-1}$ and   $s(w)$ minimal, we should have that   $s(v_n)
< t(w) \leq t(v_n)$. Then by  Lemma \ref{impar}(ii) there would exist  $z \in AP_n$ with  $s(z) < s(w)$. In this case we can take $w' \in \Sub(z)$
with $s(w')= s(z)$ and  this contradicts the minimality of $s(w)$.

If $n$ is even the proof follows exactly as in  the case $\mathcal{M}_{odd} = \mathcal{M}_{even} =
\emptyset$. \\
 (ii) If  $G_n (1 \otimes v_1 \otimes \dots \otimes v_n\otimes 1) \not = 0$, $n$ must be even because  $(v_1, \dots , v_n)$ is a good  $n$-sequence and the proof follows  as in  the case $\mathcal{M}_{odd} = \mathcal{M}_{even} =
\emptyset$ (note that in this case it is not necessary to use Lemma  \ref{par1}).

\subsubsection*{\underline{Case 3}: $ \mathcal{M}_{even} (v_1, \dots, v_n) = \emptyset $ and  $\mathcal{M}_{odd} (v_1, \dots, v_n)  \neq \emptyset
$.}

(i) Assume $G_n (1 \otimes v_1 \otimes \dots \otimes v_n\otimes 1)
=0$.
 If $n$ is odd then 
\begin{align*}G_{n-1}\circ b_n(1\otimes v_1 \otimes \dots \otimes v_n
\otimes 1)  = &  G_{n-1}(v_1
 \otimes v_2\otimes \dots \otimes v_n \otimes 1)\\ &  -  G_{n-1}( 1 \otimes v_1 \otimes  \dots \otimes
v_{2j_0-1}v_{2j_0} \otimes \dots \otimes v_n \otimes 1). \end{align*}
and the proof follows  as in  the case $\mathcal{M}_{odd} = \mathcal{M}_{even}
= \emptyset$.

If $n$ is even then 
\begin{align*} 
G_{n-1}b_n (1 \otimes & v_1 \otimes \dots \otimes v_n\otimes 1) 
=   G_{n-1}( v_1 \otimes v_2 \dots \otimes v_n\otimes 1) \\ 
&  -  G_{n-1} (1 \otimes v_1v_2 \otimes \dots v_n\otimes 1)   + 
G_{n-1} (1\otimes v_1\otimes \dots \otimes v_n) 
\end{align*} 
 if  $j_1 =1$,  and
  \begin{align*}
 G_{n-1}b_n (1 \otimes v_1 \otimes \dots \otimes v_n\otimes 1) &
=  G_{n-1} (1\otimes v_1\otimes \dots \otimes v_n)\\ &  - 
G_{n-1}(1\otimes v_1\otimes \dots \otimes
v_{2j_1-1}v_{2j_1}\otimes \dots \otimes v_n\otimes 1) \end{align*} 
if $j_1 \neq 1 $.  By  Lemma
\ref{par1}, if $w \in AP_{n-1}$ and  $s(w)< t(v_1)$ then  $t(w) \leq
s(v_n) = t(v_{n-1})$. Then using the definition of
$G_{n-1}$ it is clear that  $$G_{n-1} (1 \otimes v_1v_2 \otimes \dots v_n\otimes 1) =   G_{n-1}( v_1 \otimes v_2 \dots \otimes v_n\otimes
1) + G_{n-1} (1\otimes v_1\otimes \dots \otimes v_n)$$ in the first case, and  $$ G_{n-1} (1\otimes v_1\otimes \dots \otimes v_n) =
G_{n-1}(1\otimes v_1\otimes \dots \otimes
v_{2j_1-1}v_{2j_1}\otimes \dots \otimes v_n\otimes 1)$$ in the second  case. \\
(ii)  If $G_n (1 \otimes v_1 \otimes \dots \otimes v_n\otimes 1) \not
=0$, by definition $(v_1, \dots ,
v_n)$ is a good $n$-sequence and therefore   $n$ must be odd. In this case we have that 
\begin{align*} G_{n-1}b_n(1\otimes v_1 \otimes \dots \otimes v_n
\otimes 1)   = &  G_{n-1}( v_1 \otimes v_2
 \otimes \dots \otimes v_n \otimes 1)\\ &  -  G_{n-1}(1 \otimes
 v_1\otimes \dots \otimes v_{2j_0-1}v_{2j_0}\otimes \dots
\otimes v_n \otimes 1).\end{align*}
The proof  follows as in the case 
$\mathcal{M}_{odd} = \mathcal{M}_{even} = \emptyset$.

\subsubsection*{\underline{Case 4}: $ \mathcal{M}_{even} (v_1, \dots, v_n) \neq \emptyset $ and  $\mathcal{M}_{odd} (v_1, \dots, v_n)  \neq \emptyset $.}

In this case the $n$-sequence is bad and so $G_n (1 \otimes v_1 \otimes \dots \otimes v_n\otimes
1) =0$.
If  $n$ is odd then 
\begin{align*} G_{n-1}\circ b_n(1\otimes v_1 \otimes \dots \otimes
v_n \otimes 1)   = & G_{n-1}(1 \otimes v_1 \otimes \dots \otimes
v_{2i_1}v_{2i_1+1} \otimes \dots \otimes v_n \otimes 1) \\ &  - 
G_{n-1}(1 \otimes v_1\otimes \dots \otimes
v_{2j_0-1}v_{2j_0}\otimes \dots \otimes v_n \otimes 1).
\end{align*}
Observe that the $(n-1)$-sequence  $( v_1, \dots , v_{2i_1}v_{2i_1+1},\dots , v_n)$ is 
good  if and only if  the  $(n-1)$-sequence $( v_1,\dots , v_{2j_0-1}v_{2j_0},\dots ,v_n)$ is good, thus
\[\chi(v_1,\dots , v_{2i_1}v_{2i_1+1},\dots , v_n) = \chi(v_1,\dots , v_{2j_0-1}v_{2j_0},\dots  ,v_n)\]
and, by definition,  we get that
\begin{equation*} G_{n-1}(1 \otimes v_1 \otimes \dots \otimes
v_{2i_0}v_{2i_0+1} \otimes \dots \otimes v_n \otimes 1) = G_{n-1}(1
\otimes v_1\otimes \dots \otimes v_{2j_0-1}v_{2j_0}\otimes \dots \otimes v_n
\otimes 1).\end{equation*}

If  $n$ is even we have that 
\begin{align*} G_{n-1}\circ b_n (1 \otimes & v_1 \otimes \dots \otimes v_n\otimes 1) 
=  G_{n-1}( v_1 \otimes v_2 \dots \otimes v_n\otimes 1)\\ &  - 
 G_{n-1} (1 \otimes v_1v_2\otimes \dots  \otimes v_n\otimes 1)  \\
 & + G_{n-1}(1\otimes v_1\otimes \dots \otimes
v_{2i_0}v_{2i_0+1}\otimes \dots \otimes v_n\otimes 1) 
\end{align*}
if $j_1 = 1$ and
\begin{align*}
G_{n-1}\circ b_n (1 \otimes & v_1
\otimes \dots \otimes v_n\otimes 1)  =   G_{n-1}(1\otimes
v_1\otimes \dots \otimes v_{2i_0}v_{2i_0+1}\otimes \dots \otimes v_n\otimes
1) \\ & -  G_{n-1} (1 \otimes v_1\otimes \dots  \otimes
v_{2j_1-1}v_{2j_1}\otimes \dots  v_n\otimes 1),\end{align*}
if $j_1\neq 1$, and again by definition we get that
\begin{align*} G_{n-1} (1 \otimes v_1v_2\otimes \dots \otimes v_n
\otimes 1) = &  G_{n-1}( v_1 \otimes v_2 \dots \otimes v_n\otimes
1)\\ & +  G_{n-1}(1\otimes v_1\otimes \dots \otimes
v_{2i_0}v_{2i_0+1}\otimes \dots \otimes v_n\otimes 1)
\end{align*} in the first case and 
  \begin{equation*} G_{n-1}(1\otimes v_1\otimes \dots \otimes
v_{2i_0}v_{2i_0+1}\otimes \dots \otimes v_n\otimes 1) =  G_{n-1} (1
\otimes v_1\otimes \dots \otimes v_{2j_1-1}v_{2j_1}\otimes \dots
v_n\otimes 1)\end{equation*} in the second case.

\section{The Gerstenhaber algebra $\HH^{\ast}(A)$ for a monomial algebra $A$}

We use the comparison morphisms to obtain formulas that allow us to calculate  the ring and the Lie algebra structure of the Hochschild cohomology of monomial algebras. 

The technique consists in transporting the two structures defined on the complex $\Hom_{A^e}({\Bar A}, A)$ to structures defined on the complex  $\Hom_{A^e}({\Ap A}, A)$, using the quasi-isomorphisms 
$${\mathbf F}^{\bullet}  = \Hom_{A^e}( {\mathbf F}, A)  =   \left (
F^{n} : \Hom_{A^e}(  A^{\otimes^{n+2}}  , A) \longrightarrow \Hom_{A^e}(
A \otimes \k AP_n \otimes A , A)\right )_{n\geq 0}$$
and  
$$ {\mathbf G}^{\bullet} = \Hom_{A^e}( {\mathbf G}, A)
 =  \left ( G^{n} : \Hom_{A^e}(A \otimes  \k AP_n \otimes A , A)
 \longrightarrow \Hom_{A^e}( A^{\otimes^{n+2}} , A)\right )_{n\geq 0}$$
 induced  by the morphisms ${\mathbf F}$ and ${\mathbf G}$ respectively. This is done as follows (we still denote $\cup$ and $[ \ , \ ]$ the products defined using Bardzell's complex):
given $f \in \Hom_{A^e}(A \otimes \k AP_n \otimes A , A)$ and $g \in \Hom_{A^e}(A\otimes \k AP_m \otimes A, A)$,
$$f \cup g \in  \Hom_{A^e}(A \otimes \k AP_{n+ m} \otimes A, A) \quad \mbox{and}$$
$$ f \circ_i g \in \Hom_{A^e}(A\otimes \k AP_{n+ m-1 } \otimes A , A), \quad  1 \leq i \leq n $$
are defined by 
$$f\cup g  =  F^{n+m}(  G^n(f) \cup  G^m(g)) \qquad \mbox{and} \qquad  f\circ_i g    =    F^{m+n-1}(  G^n(f) \circ_i G^m(g)).$$

The same idea has been used in \cite{RR}*{Section 4} in order to describe the Gerstenhaber structure of the Hochschild cohomology of a quadratic string algebra.

Now we present an example to show this technique and later we conclude with two applications.
\begin{example} Let $A = \k Q/I$  where     the quiver $Q$ is an oriented cycle of length $r$ $$\xymatrix{ \scriptscriptstyle2\ar@/^/[r]^{\alpha_2} & \scriptscriptstyle3 \ar@{.}@/^/[d] &  \\ \scriptscriptstyle1  \ar@/^/[u]^{\alpha_1}  & \scriptscriptstyle r \ar@/^/[l] ^{\alpha_r} 
   }$$ and  $I = <\alpha_{1}\alpha_{2}\dots\alpha_r\alpha_{1} >$. In this case,  the Hochschild complex obtained by applying the funtor $\Hom_{A^e}(-, A)$ to  Bardzell's resolution is the following   
\begin{multline*} 0 \to  \Hom_{A^e} (A\otimes \k Q_0 \otimes A , A)   \stackrel{d^{1}}{\to}   \Hom_{A^e} (A\otimes \k Q_1 \otimes A , A)  \\
  \stackrel{d^{2}}{\to}   \Hom_{A^e} (A\otimes \k I  \otimes A , A)   \to  \dots    
  \to  \Hom_{A^e} (A\otimes \k AP_{n-1}\otimes A , A)  \\  \stackrel{d^{n}}{\to}  \Hom_{A^e} (A\otimes \k AP_n \otimes A , A)   \to  \dots  \end{multline*}
  where $AP_n = \{  (\alpha_{1}\dots\alpha_r)^{n-1}\alpha_{1}  \}$ for $n \geq 2$. A direct calculation shows  that the maps $d^n = \Hom_{A^e}(d_n, A)$ are zero for $n \geq 2$. To describe the map $d^1$, observe that the $A^e$-linear maps $h_i, g_i : A\otimes \k Q_0 \otimes A \to A $, $1 \leq i \leq r$, defined by  
 \begin{align*}
 h_{i}( 1\otimes e_j \otimes 1) & =  \delta_{ij} \ e_i \\
 g_{i}(1\otimes e_j \otimes 1) &  =  \delta_{ij} \ \alpha_i \dots \alpha_r \alpha_1 \dots \alpha_{i-1},
 \end{align*}
 with $\delta_{ij}$ the Kronecker delta,  are generators of the vector space $\Hom_{A^e} (A\otimes \k Q_0 \otimes A , A) $ and $\Ker d^1= \langle \sum_{i=1}^rh_i, \  \sum_{i=1}^rg_i \rangle$.  So $\dim_\k \Im d^1 = 2r-2$. Finally    a straightforward computation shows that  $\dim_\k \HH^n (A)  =1$ for each $n \geq 1$, and  the class of the $A^e$-linear map $f_{n}: A \otimes \k AP_n \otimes A \to A$  defined  by 
 $$f_{n}( 1\otimes (\alpha_{1}\dots\alpha_r)^{n-1}\alpha_1 \otimes 1) =  \alpha_1 $$ is a generator of $ \HH^n(A)$.
 
 Now we will compute the Lie bracket and the cup product using the previous  formulae that arose from the comparison morphisms.
 For this,  we observe that,  for $n \geq 3$,   $\Sub( (\alpha_{1}\dots\alpha_r)^{n-1}\alpha_1 ) = \{ \psi_1, \psi_2\}$ with $\psi_1 = \psi_2 =  (\alpha_{1}\dots\alpha_r)^{n-2}\alpha_1$ because  $(\alpha_{1}\dots\alpha_r)^{n-1}\alpha_1 =  (\alpha_{1}\dots\alpha_r)\psi_2 = \psi_1(\alpha_{2}\dots\alpha_r\alpha_1) $.  By Remark \ref{obs1}(2)  we have that 
 $$F_n(1 \otimes (\alpha_{1}\dots\alpha_r)^{n-1}\alpha_1  \otimes 1) = 1 \otimes L  F_{n-1}(1 \otimes  (\alpha_{1}\dots\alpha_r)^{n-2}\alpha_1 \otimes 1)$$ where we denote   $L = \alpha_{1}\dots\alpha_r$.  Thus, if we continue applying this remark  we get that
   \begin{align*}  F_{n}(1 \otimes (& \alpha_{1}\dots\alpha_r)^{n-1}\alpha_1 \otimes 1)  =   1 \otimes L^{\otimes n-3} \otimes  LF_2(1 \otimes \alpha_1\dots \alpha_r\alpha_1 \otimes 1)  \\ & = 
 \sum_{j=1}^{r-1} 1 \otimes L^{\otimes n-2}  \otimes \alpha_1\dots \alpha_j \otimes \alpha_{j+1} \otimes \alpha_{j+2} \dots \alpha_r\alpha_1  
\  +  \  1 \otimes L^{\otimes n-1}  \otimes \alpha_{1} \otimes 1\end{align*}
and this formula holds for any $n \geq 1$. 
By definition of the morphism $G$, we have that
\begin{align*}  
& G_1 (1 \otimes \alpha \otimes 1 )   =   1 \otimes \alpha \otimes 1, \quad  \mbox{for any $\alpha \in Q$, and for $n \geq 2$,}\\
&  G_n(1 \otimes L^{\otimes n-2} \otimes \alpha_1\dots\alpha_j \otimes \alpha_{j+1}  \otimes  1)     =      0,   \mbox{ \quad if $1 \leq j \leq r-1$,} \\
& G_n(1 \otimes L^{\otimes n-1} \otimes \alpha_1\dots \alpha_j \otimes 1 )   =   1 \otimes  (\alpha_{1}\dots\alpha_r)^{n-1}\alpha_1  \otimes \alpha_2 \dots \alpha_j,  \mbox{ \quad if $1 \leq j \leq r$.} 
\end{align*}
So, taking into account the above considerations, we get that if $1 \leq j \leq r-1$, 
\begin{align*} 
G^{n}(f_{n}) \circ_{i} G^{m}(f_{m}) ( 1 \otimes L^{n+m-3} \otimes  \alpha_1\dots \alpha_j \otimes \alpha_{j+1} \otimes \alpha_{j+2} \dots \alpha_r\alpha_1 ) & =   0  \\
& \mbox{for $1 \leq i \leq n$, }  \\ 
 G^{n}(f_{n}) \cup G^{m}(f_{m}) ( 1 \otimes L^{n+m-2} \otimes  \alpha_1\dots \alpha_j \otimes \alpha_{j+1} \otimes \alpha_{j+2} \dots \alpha_r\alpha_1 )&  =   0 \end{align*}
 and \begin{align*} G^{n}( f_{n}) \circ_{i}G^{m}( f_{m})( 1 \otimes L^{\otimes n+m-2} \otimes   \alpha_{1} \otimes 1) & =   \alpha_1\\
 G^{n}( f_{n}) \cup G^{m}( f_{m})( 1 \otimes L^{\otimes n+m-1} \otimes   \alpha_{1} \otimes 1) & =    0. \end{align*}
So    
$$ f_{n} \cup f_{m}  =    F^{n+m}(  G^{n}(f_{n}) \cup  G^{m}(f_{m})) =  0, $$  
$$  f_{n} \circ_i f_{m}  =    F^{n+m-1}(  G^{n}(f_{n}) \circ_i G^{m}(f_{m})) = f_{n+m-1}, \mbox{ and} $$  $$ f_{n} \circ f_{m} = \sum_{i=1}^{n}(-1)^{(i-1)(m-1)}f_{n+m-1}. $$  Finally we get that,  for any $n,m \geq 1$, 
$$[ f_{n}, f_{m}] =
\begin{cases}
(n-m)f_{n+m-1}, & \hbox{if $n, m$ are odd;} \\
(n-1)f_{n+m-1}, & \hbox{if $n$ is even and $m$ is odd;} \\
0, & \hbox{if $n, m$ are even.} \\
\end{cases}$$
If we only consider odd degrees, that is,  $$ \HH^{odd}(A) = \bigoplus_{\substack{q \geq 0,} } \HH^{2q+ 1}(A),$$   then  $ \HH^{odd}(A)$ is isomorphic, as Lie algebras, to the infinite dimensional Witt algebra.
\end{example}

In order to get more general results concerning the cup product and the Lie bracket for the Hochschild cohomology groups of monomial algebras, we will start by studying more carefully the morphism ${\mathbf F}$.  This description will imply that  ${\mathbf G} \circ {\mathbf F }= \Id_{\Ap}$.

\bigskip

\begin{lemma}\label{sumaf} Let $w \in AP_n$, $n \geq 2$ and $\psi_i \in AP_{n-i}$ given by
\begin{align*} w & =  L(\psi_1)\psi_1 \\
\psi_i & = L(\psi_{i+1})\psi_{i+1}, \quad \mbox{ $i = 1, \dots,
n-2$,}
\end{align*}
that is, 
$w = L(\psi_1) L(\psi_2) \dots L(\psi_{n-1}) \psi_{n-1}$.  Then 
\begin{align*}F_n(1\otimes w\otimes 1)
 = & \sum_{(a_1, a_2, \dots, a_n , b) \in \mathcal{K}} 1 \otimes
a_1 \otimes a_2 \otimes \dots \otimes a_n \otimes b \\
  &  +     1 \otimes L(\psi_1) \otimes L(\psi_2)
\otimes \dots \otimes L(\psi_{n-1}) \otimes \psi_{n-1}
\otimes 1  \end{align*} where $\mathcal{K}$ is a subset of 
$\{  (a_1, a_2, \dots, a_n , b):   a_1,   \dots,  a_n,  b \in \P : \
\vert b
 \vert > 0,  w = a_1\dots a_nb \} .$
\end{lemma}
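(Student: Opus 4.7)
The plan is to proceed by induction on $n \geq 2$, using the recursive definition of $F_n$ together with the characterization of $\psi_1$ as the rightmost subpath in $\Sub(w)$.

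For the base case $n = 2$, write $w = \alpha_1 \dots \alpha_s \in \R$. By Remark \ref{obs1}(1),
\[ F_2(1\otimes w \otimes 1) = \sum_{i=1}^{s-1} 1 \otimes \alpha_1\dots\alpha_i \otimes \alpha_{i+1} \otimes \alpha_{i+2}\dots\alpha_s. \]
The condition $w = L(\psi_1)\psi_1$ with $\psi_1 \in AP_1 = Q_1$ forces $\psi_1 = \alpha_s$ and $L(\psi_1) = \alpha_1\dots\alpha_{s-1}$. Hence the summand $i = s-1$ yields exactly $1 \otimes L(\psi_1) \otimes \psi_1 \otimes 1$ (with $b=1$), while all the remaining summands ($i < s-1$) have last slot $\alpha_{i+2}\dots\alpha_s$ of positive length, so they fit into the set $\mathcal{K}$.

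For the inductive step, let $\Sub(w) = \{\zeta_1, \dots, \zeta_m\}$ with $s(\zeta_i) < s(\zeta_j)$ for $i<j$ and $w = L_j\zeta_j R_j$. The definition of $F_n$ gives
\[ F_n(1 \otimes w \otimes 1) = \sum_{i=1}^{m-1} 1 \otimes L_{i+1} F_{n-1}(1 \otimes \zeta_{i+1}\otimes 1) R_{i+1}. \]
The crucial observation is that the hypothesis $w = L(\psi_1)\psi_1$ forces $R(\psi_1) = e_{t(w)}$, so $\psi_1$ must be $\zeta_m$, with $L_m = L(\psi_1)$ and $R_m = 1$. Moreover, the tower of $\psi_i$'s associated to $w$ restricts to the tower of $\psi_j$'s ($j \geq 2$) associated to the subproblem $\psi_1 \in AP_{n-1}$. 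Applying the induction hypothesis to $F_{n-1}(1 \otimes \psi_1 \otimes 1)$ produces a distinguished term $1 \otimes L(\psi_2) \otimes \dots \otimes L(\psi_{n-1}) \otimes \psi_{n-1} \otimes 1$ plus a sum of tensors whose last factor has positive length. After multiplying by $L_m = L(\psi_1)$ on the left and $R_m = 1$ on the right, and prepending the outer $1 \otimes$, the distinguished term becomes precisely the claimed tensor $1 \otimes L(\psi_1) \otimes L(\psi_2) \otimes \dots \otimes L(\psi_{n-1}) \otimes \psi_{n-1} \otimes 1$.

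For the remaining summands with $i+1 < m$, we have $|R_{i+1}| > 0$, so every term $1 \otimes x_1 \otimes \dots \otimes x_{n-1} \otimes d$ coming from $F_{n-1}(1 \otimes \zeta_{i+1}\otimes 1)$ contributes, after right multiplication by $R_{i+1}$, a tensor whose last factor $d R_{i+1}$ has positive length; this factor is either in $\P$ (and the term belongs to $\mathcal{K}$) or in $I$ (and the term vanishes). The identity $w = a_1 \cdots a_n b$ for each surviving summand follows from the inductive identity $\zeta_{i+1} = x_1 \cdots x_{n-1} d$ together with $w = L_{i+1} \zeta_{i+1} R_{i+1}$. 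The main obstacle is purely notational: matching the recursion on $F$ with the recursion on the tower $\psi_1, \psi_2, \dots$, and verifying that no summand with $i+1 < m$ can produce a tensor with $b = 1$, which is immediate since $R_{i+1}$ itself has positive length.
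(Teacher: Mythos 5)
Your proof is correct and follows essentially the same route as the paper's: induction on $n$ with the base case $n=2$ read off from Remark \ref{obs1}(1), and in the inductive step identifying $\psi_1$ with the last element $\zeta_m$ of $\Sub(w)$, applying the induction hypothesis to $F_{n-1}(1\otimes\psi_1\otimes 1)$, and observing that every summand with $i+1<m$ acquires a last factor of positive length because $|R_{i+1}|>0$. You in fact spell out the bookkeeping (why $\psi_1=\zeta_m$ and why the remaining terms land in $\mathcal{K}$ or vanish) a bit more explicitly than the paper does.
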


\begin{proof} The result will be proved using induction on $n$.  If $n=2$ the result is clear from Remark \ref{obs1}(1). Assume $n>2$ and let 
$w \in AP_n$ with $\Sub(w) = \{\zeta_{1}, \dots,  \zeta_{m}  \} $, where $\zeta_{m} = \psi_1$.  Then 
\begin{align*}F_n(1\otimes w\otimes 1)   =  \sum_{i= 1}^{m-2} 1\otimes
 L_{i+1}F_{n-1}(1\otimes \zeta_{i+1} \otimes
1)R_{i+1} \ 
 +  \ 1 \otimes L(\psi_1)F_{n-1}(1 \otimes \psi_1 \otimes 1).\end{align*} 
By induction hypothesis we have that 
\begin{align*}F_{n-1}(1\otimes \psi_1 \otimes 1)
 = & \sum_{(a_2, \dots, a_n , b) \in \mathcal{K'}} 1 \otimes
a_2 \otimes a_3 \otimes \dots \otimes a_n \otimes b \\
  &  +     1 \otimes L(\psi_2) \otimes L(\psi_3)
\otimes \dots \otimes L(\psi_{n-1}) \otimes \psi_{n-1}
\otimes 1  \end{align*} where $\mathcal{K'}$ is a subset of 
$\{  (a_2, \dots, a_n , b):   a_2,   \dots,  a_n,  b \in \P : \
\vert b
 \vert > 0,  \psi_1 = a_2\dots a_nb \} $ and hence the result follows since $w  =  L(\psi_1)\psi_1$.
\end{proof}

\begin{prop} The comparison morphisms defined in Subsections \ref{F} and \ref{G} respectively,  satisfy the equality  ${\mathbf G} \circ {\mathbf F} = \Id_{\Ap A}$. \end{prop}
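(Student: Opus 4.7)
The plan is to verify $G_n \circ F_n (1 \otimes w \otimes 1) = 1 \otimes w \otimes 1$ for every $w \in AP_n$. I would dispatch the base cases $n=0,1$ by direct substitution in the explicit formulas and then focus on $n \geq 2$, where the key tool is Lemma~\ref{sumaf}. That lemma lets me write
\[
F_n(1 \otimes w \otimes 1) \;=\; T^{\ast} \;+\; \sum_{(a_1,\dots,a_n,b) \in \mathcal{K}} 1 \otimes a_1 \otimes \dots \otimes a_n \otimes b,
\]
where $T^{\ast} = 1 \otimes L(\psi_1) \otimes \dots \otimes L(\psi_{n-1}) \otimes \psi_{n-1} \otimes 1$ is the distinguished summand and every tuple in $\mathcal{K}$ has $|b| > 0$. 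The claim then splits into showing $G_n(T^{\ast}) = 1 \otimes w \otimes 1$ and $G_n$ annihilating every $\mathcal{K}$-term.

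For the distinguished term, I would iterate Lemma~\ref{cero en i} along the descending chain $w = L(\psi_1)\psi_1,\ \psi_1 = L(\psi_2)\psi_2,\ \dots$ to deduce $L(\psi_i) L(\psi_{i+1}) \in I$ for $i=1,\dots,n-2$, and observe that $L(\psi_{n-1})\psi_{n-1} = \psi_{n-2} \in AP_2 \subset I$. Thus every consecutive product in the well-concatenated $n$-sequence $(v_1,\dots,v_n) = (L(\psi_1),\dots,L(\psi_{n-1}),\psi_{n-1})$ lies in $I$, so $\mathcal{M}_{odd} = \mathcal{M}_{even} = \emptyset$ and the sequence is good. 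Since $w$ decomposes trivially as $1\cdot w\cdot 1$ we have $w \in \chi(v_1,\dots,v_n)$, and any $z \in \chi$ satisfies $s(z) \geq s(v_1) = s(w)$, so $w$ achieves the minimum. Evaluating $G_n$ on good sequences then returns $1 \otimes w \otimes 1$ in the even-$n$ case (with $w_1=w$); for odd $n$ one must additionally verify that the only $z \in \chi$ with $s(v_1) \leq s(z) < t(v_1)$ contributing non-trivially is $w$ itself, a check that rests on Lemmas~\ref{A3} and~\ref{iguales} to rule out concatenations starting inside the proper prefix $L(\psi_1)$.

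For the error terms, each $(a_1,\dots,a_n,b) \in \mathcal{K}$ produces a well-concatenated $n$-sequence $(a_1,\dots,a_n)$ whose concatenation is a proper prefix of $w$. I would show that one of the kernel conditions of Remark~\ref{condicioncero} holds: either some $a_i a_{i+1} \notin I$ (bad sequence), or $\chi(a_1,\dots,a_n) = \emptyset$, or the minimal-start concatenation in $\chi$ has $L$ or $R$ in $I$. The route I expect to work is a sharpening of Lemma~\ref{sumaf}, carried out by induction on $n$, in which $\mathcal{K}$ inherits structural information from the recursive $F$-expansion of the non-extremal $\zeta_{i+1} \in \mathrm{Sub}(w)$; the combinatorial input of Lemmas~\ref{A2}, \ref{A}, \ref{A3} and~\ref{cero 2} then forces one of the kernel alternatives for each tuple, in the same spirit as the case analysis already developed in Section~\ref{morfismoG}.

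The main obstacle is precisely this last step: the distinguished-term contribution is transparent once Lemma~\ref{cero en i} is iterated, but uniformly verifying that every $\mathcal{K}$-term is killed by $G_n$ demands a careful parity-separated bookkeeping. I would expect the cleanest packaging to be a strengthened form of Lemma~\ref{sumaf} that explicitly records, for each error tuple, which of the three kernel alternatives of Remark~\ref{condicioncero} it satisfies, after which the conclusion $\mathbf{G} \circ \mathbf{F} = \Id_{\Ap A}$ drops out immediately.
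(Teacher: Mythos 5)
Your setup---the base cases, the decomposition of $F_n(1\otimes w\otimes 1)$ via Lemma \ref{sumaf} into the distinguished term plus the $\mathcal{K}$-terms, and the verification that $(L(\psi_1),\dots,L(\psi_{n-1}),\psi_{n-1})$ is a good $n$-sequence (your iteration of Lemma \ref{cero en i} is a legitimate substitute for the paper's direct use of the op-presentation $w=w^{op}(q^1,\dots,q^{n-1})$)---matches the intended argument. But the step you yourself flag as ``the main obstacle'', namely that $G_n$ annihilates every $\mathcal{K}$-term, is exactly where the proposal stops being a proof: you outline a strengthened form of Lemma \ref{sumaf} with parity-separated bookkeeping and an appeal to the kernel alternatives of Remark \ref{condicioncero}, but none of this is carried out, so the equality ${\mathbf G}\circ{\mathbf F}=\Id_{\Ap A}$ is not established by what you wrote.

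Moreover, the missing step has a one-line proof that makes all of that machinery unnecessary, and it is the one the paper uses: for $(a_1,\dots,a_n,b)\in\mathcal{K}$ one has $w=a_1\cdots a_n b$ with $\vert b\vert>0$, so any $z\in\chi(a_1,\dots,a_n)$ would be an element of $AP_n$ dividing $a_1\cdots a_n$, hence a proper divisor of $w\in AP_n$, which is impossible (two distinct elements of $AP_n$ cannot divide one another). Therefore $\chi(a_1,\dots,a_n)=\emptyset$ and, directly from the definition of $G_n$, every $\mathcal{K}$-term is sent to zero---no good/bad distinction and no case analysis from Remark \ref{condicioncero} are needed once $\chi$ is empty. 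The same observation closes the gap you left in the odd-$n$ part of the distinguished term: any $z\in\chi(L(\psi_1),\dots,L(\psi_{n-1}),\psi_{n-1})$ divides $w$ and lies in $AP_n$, so $z=w$; hence $\chi=\{w\}$ and the defining sum for $G_n$ reduces to the single term $1\otimes w\otimes 1$ (note $s(w)=s(L(\psi_1))<t(L(\psi_1))$ since $\vert L(\psi_1)\vert>0$), with no need to invoke Lemmas \ref{A3} and \ref{iguales}.
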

\begin{proof}
A direct computation shows that
\begin{align*} G_0 \circ F_0(1 \otimes e \otimes 1)&  =  G_0(e
\otimes 1) = e \otimes 1 \otimes 1 = 1 \otimes e \otimes 1, \mbox{ \
and } \\ G_1 \circ F_1(1 \otimes \alpha \otimes 1)&  =   G_1 (1
\otimes \alpha \otimes 1) = 1 \otimes \alpha \otimes 1.
\end{align*}
If $w \in AP_n$ with $n \geq 2$, the previous lemma says that
\begin{align*} G_n \circ F_n(1 \otimes w \otimes 1)  = &
\sum_{(a_1, a_2, \dots, a_n , b) \in \mathcal{K}} G_n (1 \otimes a_1
\otimes a_2 \otimes \dots \otimes a_n \otimes b)   
\\ & +  G_n( 1
\otimes L(\psi_1) \otimes L(\psi_2) \otimes \dots \otimes
L(\psi_{n-1}) \otimes \psi_{n-1} \otimes 1 ). \end{align*}
If  $w = a_1\dots a_nb$ with $\vert b \vert > 0$, it is clear that
$\chi(a_1, \dots, a_n) = \emptyset$, since otherwise there would exist
$z \in AP_n$ a divisor of $a_1\dots a_n$ and hence $z$ and $w$ would belong to  $AP_n$  with $z$ a proper divisor of 
$w$, a contradiction. Thus
\begin{equation*} \sum_{(a_1, a_2, \dots, a_n , b) \in \mathcal{K}} G_n (1 \otimes a_1 \otimes a_2 \otimes \dots \otimes
a_n \otimes b)  = 0. \end{equation*} 
On the other hand, if $w =
w^{op}(q^1, \dots , q^{n-1})$ then $\psi_{i-1}   =  (q^{i}, \dots, q^{n-1})$ and $\psi_{i-1} = L(\psi_{i}) L(\psi_{i+1}) \psi_{i+1}$.  Hence  the  $n$-sequence $(L(\psi_1), \dots,
L(\psi_{n-1}), \psi_{n-1})$ is good because
$q^i$ divides $L(\psi_i)L(\psi_{i+1})$.  Thus 
\begin{equation*}  G_n( 1
\otimes L(\psi_1) \otimes L(\psi_2) \otimes \dots \otimes
L(\psi_{n-1}) \otimes \psi_{n-1} \otimes 1 ) = 1 \otimes w \otimes
1\end{equation*}
since
 $w = L(\psi_1) L(\psi_2)\dots
L(\psi_{n-1}) \psi_{n-1}.$ 
\end{proof}

\subsection{ The module structure of $\HH^*(A)$ over the Lie algebra $\HH^1(A)$}\label{modulo}

The first cohomology group $\HH^1(A)$ is a Lie algebra 
and the Lie bracket induces a module structure of $\HH^*(A)$ over the Lie algebra $\HH^1(A)$. 
For any $f \in \Hom_{A^e}(A \otimes \k AP_n \otimes A, A)$ and $\delta \in \Hom_{A^e}(A \otimes \k Q_1 \otimes A,A)$, 
 the description of $[\delta, f]$ involves the following computations             
$$ F^n
(G^n(f) \circ_i G^1(\delta)) \quad ( i =
1, \dots, n) \  \mbox{and}  \   F^n (G^1(\delta) \circ_1
G^n(f)). $$
For any $w \in
AP_n$, using Lemma \ref{sumaf} we get that
\begin{align*} G^n(f) \circ_i & G^1(\delta)(F_{n}( 1 \otimes w \otimes 1))
     \\   = & \sum_{(a_1, a_2,
\dots, a_n , b) \in \mathcal{K}}G^n(f)(1 \otimes a_1 \otimes \dots \otimes
G^1(\delta)(1 \otimes a_i \otimes 1 ) \otimes \dots \otimes a_{n} \otimes b)
\\  &  + \quad  G^n(f) (1 \otimes  L(\psi_1) \otimes \dots \otimes G^1(\delta)(1 \otimes L(\psi_i) \otimes 1) \otimes \dots
\otimes \psi_{n-1} \otimes 1). 
\end{align*}
and 
\begin{equation*}
\begin{split}
G^1(\delta) \circ
G^n(f)(F_n(1 \otimes w \otimes 1))  & =   
G^1(\delta)\left(1 \otimes f(G_n\circ F_n (1 \otimes w \otimes 1) ) \otimes 1 \right) \\
& =  G^1(\delta)\left(1 \otimes f(1 \otimes w \otimes 1)\otimes 1 \right)
 \end{split}
 \end{equation*}
where the second equality follows from the fact that  $G_n\circ F_n =
\Id_{AP_n}$.

In the particular case of a  
monomial algebra $A$ that satisfies the  property \linebreak
$\dim_\k e_i A
e_j = 1$ if there exists $\alpha: i \to j \in Q_1$, the group 
 $\HH^1(A)$ is generated by the set $(Q_1//Q_1)= \{ (\alpha, \alpha), \alpha \in Q_1\}$, see {\cite{CS}.}
The map $\delta_\alpha: A \otimes \k Q_1 \otimes A \to A$ defined by 
\[ \delta_\alpha ( 1 \otimes \beta \otimes 1) = 
\begin{cases}
\alpha, & \hbox{if $ \beta = \alpha$;} \\
0, & \hbox{otherwise}
\end{cases}\] 
represents the generator $(\alpha, \alpha)$ of $ \HH^1(A)$.  
If $v= \alpha_1\dots \alpha_s$ , $\alpha_i \in Q_1$ we denote by $C(\alpha, v) =
\vert \{ i : \alpha = \alpha_i, \ i = 1, \dots,  s \} \vert $, and we extend by linearity to a map  $C(\alpha, -) : A \to \k$.
             
By definition, $G^1(\delta_\alpha)(1 \otimes w \otimes 1)= C(\alpha, w)w$ and a direct computation shows that
$$ G^n(f) \circ_i
 G^1(\delta_\alpha)(F_{n} (1 \otimes  w \otimes  1))  =  C(\alpha,
L(\psi_i)) f(1 \otimes w \otimes 1 ).$$
Hence  $$F^n (G^n(f) \circ G^1(\delta_\alpha))(1 \otimes w \otimes 1) =
 C(\alpha, w) f(1 \otimes w \otimes 1)$$ and
 $$ G^1(\delta_\alpha) \circ  G^n(f) (F_{n} (1 \otimes  w \otimes  1)) =  C(\alpha,
f(1 \otimes w \otimes 1)) f(1 \otimes w \otimes 1), $$
so
$$[\delta_\alpha, f] (1 \otimes w \otimes 1 ) = \left(C(\alpha, w)  -  C(\alpha,
f(1 \otimes w \otimes 1)) \right) f(1 \otimes w \otimes 1).$$

 This result has also been proved in \cite{MSA}.

\subsection{The cup product in  $\oplus_{n \geq 0} \HH^{2n}(A)$}

Using our formulas we will show that the cup product restricted to even degrees of the Hochschild cohomology has a very simple description.

\begin{teo}
Let $f \in HH^{2n}(A)$, $g \in \HH^{2m}(A)$.  Then 
$$f \cup g (1 \otimes w \otimes 1) = f(1 \otimes w(p_1, \dots, p_{2n-1}) \otimes 1)a g(1 \otimes w^{op}(q^{2n+1}, \dots, q^{2n+2m-1})\otimes 1),$$
where $w = w(p_1, \dots, p_{2n-1}) a w^{op}(q^{2n+1}, \dots, q^{2n+2m-1})$.
\end{teo}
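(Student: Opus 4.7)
The plan is to apply the transport formula
\[
(f \cup g)(1 \otimes w \otimes 1) = \bigl(G^{2n}(f) \cup G^{2m}(g)\bigr)\bigl(F_{2n+2m}(1 \otimes w \otimes 1)\bigr),
\]
expand the argument on the right using Lemma \ref{sumaf}, and show that only the distinguished ``main term'' of that expansion contributes. Lemma \ref{sumaf} decomposes $F_{2n+2m}(1 \otimes w \otimes 1)$ as a sum of $\mathcal{K}$-terms $1 \otimes a_1 \otimes \cdots \otimes a_{2n+2m} \otimes b$ with $|b|>0$, plus the main term
\[
1 \otimes L(\psi_1) \otimes \cdots \otimes L(\psi_{2n+2m-1}) \otimes \psi_{2n+2m-1} \otimes 1,
\]
where $w = L(\psi_1) \psi_1$ and $\psi_i = L(\psi_{i+1}) \psi_{i+1}$. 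Since $f$ and $g$ live in even degree, $G_{2n}$ and $G_{2m}$ each return at most one summand $L(w') \otimes w' \otimes R(w')$, corresponding to the $w' \in \chi$ with minimum starting point, when $\chi$ is non-empty and the underlying sequence is good. Splitting each tensor at position $2n$ for the cup product therefore yields at most one product $L(w'_1)\, f(1 \otimes w'_1 \otimes 1)\, R(w'_1)\, L(w'_2)\, g(1 \otimes w'_2 \otimes 1)\, R(w'_2)$ per term of the expansion.

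Next I would evaluate the main term. Iterating $\psi_{i-1} = L(\psi_i)\psi_i$ identifies $\psi_i$ with $w^{op}(q^{i+1}, \dots, q^{2n+2m-1})$, so the first $2n$ factors $L(\psi_1), \dots, L(\psi_{2n})$ multiply to the prefix of $w$ from $s(w)=s(q^1)$ to $s(\psi_{2n}) = s(q^{2n+1})$, while the last $2m$ factors multiply to $\psi_{2n} = w^{op}(q^{2n+1}, \dots, q^{2n+2m-1}) \in AP_{2m}$. A short combinatorial check, adapting the proof of Lemma \ref{A2}(i), combines the $p$-construction inequality $t(p_{2n-1}) < t(p_{2n})$ with the chain $t(p_{2n}) \leq t(q^{2n-1}) \leq s(q^{2n+1})$ (the latter coming from the op-condition $t(q^{i-1}) \leq s(q^{i+1})$) to yield $t(p_{2n-1}) \leq s(q^{2n+1})$. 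Hence $w(p_1,\dots,p_{2n-1}) \in AP_{2n}$ divides the first-half prefix with trivial left factor and right factor equal to the middle piece $a$ from $t(p_{2n-1})$ to $s(q^{2n+1})$; being the divisor whose start coincides with $s(w)$, it is selected by $G_{2n}$. Similarly $w^{op}(q^{2n+1},\dots,q^{2n+2m-1})$ fills the whole second-half path, hence is the unique element of its $\chi$ and is selected by $G_{2m}$ with trivial left and right factors. Using $A^e$-linearity of $f$ and $g$, the main term contributes precisely
\[
f(1 \otimes w(p_1, \dots, p_{2n-1}) \otimes 1)\, a\, g(1 \otimes w^{op}(q^{2n+1}, \dots, q^{2n+2m-1}) \otimes 1),
\]
which is the right-hand side of the theorem.

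The principal obstacle is to verify that every $\mathcal{K}$-term contributes zero. For such a term $|b|>0$, so the second half $(a_{2n+1},\dots,a_{2n+2m})$ does not reach $t(w)$; the plan is to show that in every case at least one of the conditions of Remark \ref{condicioncero} fails on one of the two halves---either the first or second half is not a good sequence, or its $\chi$ is empty, or the selected minimum element $w'_i$ has $L(w'_i) \in I$ or $R(w'_i) \in I$. The combinatorial lemmas of Section \ref{morfismoG}, especially Lemmas \ref{A}, \ref{A2}, \ref{A3}, \ref{sh}, and \ref{iguales}, are the essential tools for this case analysis, which is where the main technical work of the argument lies.
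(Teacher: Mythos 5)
Your reduction to the transport formula, the expansion via Lemma \ref{sumaf}, and the evaluation of the main term are essentially the paper's argument: the paper likewise identifies $L(\psi_{2n+1})\cdots L(\psi_{2n+2m-1})\psi_{2n+2m-1}$ with $w^{op}(q^{2n+1},\dots,q^{2n+2m-1})\in AP_{2m}$, and uses $t(p_{2n-1})\leq t(q^{2n-1})< s(q^{2n+1})$ (quoted there from Bardzell's Lemma 3.1, rather than re-derived as you propose) to see that $w(p_1,\dots,p_{2n-1})$ lies in $\chi(L(\psi_1),\dots,L(\psi_{2n}))$ with minimal starting point $s(p_1)=s(L(\psi_1))$, so that $G_{2n}$ selects it with right factor $a$. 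Up to that point your proposal is sound.

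The genuine gap is the vanishing of the $\mathcal{K}$-terms, which you explicitly leave as a ``plan'' and correctly identify as the main technical work: a proof must actually carry it out, and the needed argument is not a per-half application of Remark \ref{condicioncero} but a joint contradiction using both halves simultaneously. In the paper one assumes both factors of a $\mathcal{K}$-term are nonzero; then there is $u\in AP_{2m}$ dividing $a_{2n+1}\cdots a_{2n+2m}$ with $t(u)\leq t(a_{2n+2m})<t(\psi_{2n+2m-1})$ (this is where $\vert b\vert>0$ enters), Lemma \ref{A2}(ii) applied to $u$ and $w^{op}(q^{2n+1},\dots,q^{2n+2m-1})$ produces $z\in AP_{2m+1}$ with $s(z)=s(u)$, and comparing $s(u)$ with $s(q^{2n})$ forces $s(u)\leq s(q^{2n})$, since otherwise $z$ would be a proper divisor of $w^{op}(q^{2n},\dots,q^{2n+2m-1})\in AP_{2m+1}$. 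Then the element $v\in AP_{2n}$ dividing $a_1\cdots a_{2n}$ (coming from nonvanishing of the first factor) satisfies $s(p_1)=s(a_1)\leq s(v)\leq t(v)\leq s(u)\leq s(q^{2n})<t(p_{2n-1})$, making $v$ a proper divisor of $w(p_1,\dots,p_{2n-1})\in AP_{2n}$, a contradiction. Note the interplay: the bound on $t(v)$ uses the $u$ from the \emph{second} half, so one cannot decide termwise which of the conditions of Remark \ref{condicioncero} fails for each half in isolation. Without this (or an equivalent) argument your proof is incomplete, since the cancellation of the $\mathcal{K}$-terms is precisely what reduces $f\cup g$ to the single product in the statement.
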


\begin{proof}
For any  $f \in \Hom_{A^e}(A \otimes \k AP_{2n} \otimes A, A)$ and $g \in \Hom_{A^e}(A \otimes \k AP_{2m} \otimes A, A)$
we have already seen that the cup product $f \cup g$ is given by
$$F^{2n+2m}(  G^{2n}(f) \cup  G^{2m}(g))$$ and for any  $w \in AP_{2n+2m}$  we can compute it using 
Lemma \ref{sumaf} as follows:
\begin{align*} 
F^{2n+2m}(&  G^{2n}(f) \cup   G^{2m}(g))(1 \otimes w \otimes 1) =  \quad  G^{2n}(f) \cup
G^{2m}(g)(F_{2n+2m}(1 \otimes w \otimes 1))\\  
 =    & \sum_{(a_1, a_2, \dots, a_{2n+2m}
, b) \in \mathcal{K}}G^{2n}(f)( 1 \otimes a_1 \otimes \dots \otimes
a_{2n} \otimes 1) \\
& \quad  \times G^{2m}(g)(1 \otimes a_{2n+1} \otimes \dots \otimes a_{2n+2m} \otimes b )  \\ & + 
\quad  G^{2n}(f)(1 \otimes L(\psi_1) \otimes \dots \otimes L(\psi_{2n})\otimes 1
) \\
& \quad  \times G^{2m}(g)(1 \otimes L(\psi_{2n+1}) \otimes \dots \otimes \psi_{2n+2m-1} \otimes 1).
\end{align*}
All the terms in the previous sum that contain the path $b$ vanish, since otherwise
$$G^{2m}(g)(1 \otimes a_{2n+1} \otimes \dots \otimes
a_{2n+2m} \otimes 1) \neq 0$$
and in this case there should exist  $u \in AP_{2m}$ a divisor of $a_{2n+1}
\dots a_{2n+2m}$. Since \linebreak $\vert b\vert > 0$,  we have that $t(u) \leq t(a_{2n+2m}) <
t(\psi_{2n+2m-1})$. Applying Lemma \ref{A2}(ii) to the concatenations $u$ and 
$$w^{op}(q^{2n+1}, \dots,
q^{2n+2m-1}) = L(\psi_{2n+1})  \dots L(\psi_{2n+2m-1}) \psi_{2n+2m-1}$$ in $AP_{2m}$  we know that there exists $z \in AP_{2m+1}$
with $s(z) = s(u)$.  We can picture this situation as follows: 
\[ \xymatrix{ & &  \ar@{|-|}[rrrrrrrr]^{w^{op}(q^{2n+1}, \dots, q^{2n+2m-1})} & & &  & & & & &   \\
 &   \ar@{|-|}[rrrrrrr]^{u} &  & & & & & & & &  \\
 &  \ar@{|-|}[rrrrrrrr]^{z} &  &  & & & & & &  &  .\\
   } \]   
Now we are going to compare $s(u)$ and $s(q^{2n})$. Assume $s(q^{2n}) < s(u)$; then \linebreak $z
\in AP_{2m+1}$ should be a proper divisor of $w^{op}(q^{2n}, \dots, q^{2n+2m-1})
\in AP_{2m+1}$, a contradiction. Then  $s(u) \leq
s(q^{2n})$, and since $ G^{2n}(f)(1 \otimes a_1 \otimes \dots \otimes a_{2n} \otimes 1) \neq 0$
there should exist $v \in AP_{2n}$ a divisor of $a_1 \dots  a_{2n}$  and hence 
$$s(a_1)\leq s(v) \quad  \mbox{ and } \quad  t(v) \leq t(a_{2n}) \leq s(u).$$ From \cite{B}*{Lemma 3.1}
we deduce that 
$$w = w^{op}(q^1, \dots, q^{2n+2m-1}) = w(p_1, \dots,
p_{2n+2m-1})$$ and $s(q^{2n}) < t(p_{2n-1})$, 
so
$$ s(p_1) = s(a_1) \leq s(v) \leq t(v) \leq s(u) \leq s(q^{2n}) < t(p_{2n-1}).$$ Hence  $v \in AP_{2n}$ is a proper divisor of $w(p_1, \dots, p_{2n-1}) \in AP_{2n}$, a contradiction.

Now we can conclude that 
 \begin{align*}
 F^{2n+2m} ( G^{2n}&(f) \cup G^{2m}(g))(1 \otimes w \otimes 1)  \\
  = & G^{2n}(f)(1 \otimes L(\psi_1) \otimes
\dots \otimes L(\psi_{2n})\otimes 1 ) \\
& \times G^{2m}(g)(1 \otimes L(\psi_{2n+1}) \otimes \dots \otimes
\psi_{2n+2m-1}) \otimes 1).
\end{align*}
Using that  $L(\psi_{2n+1})  \dots L(\psi_{2n+2m-1}) \psi_{2n+2m-1} =
w^{op}(q^{2n+1}, \dots, q^{2n+2m-1})\in AP_{2m}$ we have that 
$$G^{2m}(g)(1 \otimes L(\psi_{2n+1}) \otimes \dots \otimes
\psi_{2n+2m-1} \otimes 1) = g ( 1 \otimes w^{op}(q^{2n+1}, \dots, q^{2n+2m-1}) \otimes 1).$$  On the other hand, \cite{B}*{Lemma 3.1} says that
$t(p_{2n-1}) \leq t(q^{2n-1}) <s(q^{2n+1})$ and then $w(p_1, \dots, p_{2n-1}) \in \chi(L(\psi_1),\dots ,L(\psi_{2n}))$,
and since $s(p_1) = s(L(\psi_1))$,  its starting point must be minimal with respect to all the concatenations considered in the previous set. Hence
$$ G^{2n}(f)(1 \otimes L(\psi_1) \otimes
\dots \otimes L(\psi_{2n}) \otimes 1 )   = f(1 \otimes w(p_1, \dots, p_{2n-1}) \otimes a).$$ \end{proof}

\begin{bibdiv}
\begin{biblist}

\bib{ACT}{article}{
   author={Ames, Guillermo},
   author={Cagliero, Leandro},
   author={Tirao, Paulo},
   title={Comparison morphisms and the Hochschild cohomology ring of
   truncated quiver algebras},
   journal={J. Algebra},
   volume={322},
   date={2009},
   number={5},
   pages={1466--1497},
}

\bib{ASS}{book}{
   author={Assem, Ibrahim},
   author={Simson, Daniel},
   author={Skowro{\'n}ski, Andrzej},
   title={Elements of the representation theory of associative algebras.
   Vol. 1},
   series={London Mathematical Society Student Texts},
   volume={65},
   note={Techniques of representation theory},
   publisher={Cambridge University Press, Cambridge},
   date={2006},
   pages={x+458},
}

\bib{B}{article}{
   author={Bardzell, Michael J.},
   title={The alternating syzygy behavior of monomial algebras},
   journal={J. Algebra},
   volume={188},
   date={1997},
   number={1},
   pages={69--89},
}

\bib{BGSS}{article}{
   author={Buchweitz, Ragnar-Olaf},
   author={Green, Edward L.},
   author={Snashall, Nicole},
   author={Solberg, {\O}yvind},
   title={Multiplicative structures for Koszul algebras},
   journal={Q. J. Math.},
   volume={59},
   date={2008},
   number={4},
   pages={441--454},
 }

\bib{CE}{book}{
   author={Cartan, Henri},
   author={Eilenberg, Samuel},
   title={Homological algebra},
   series={Princeton Landmarks in Mathematics},
   note={With an appendix by David A. Buchsbaum;
   Reprint of the 1956 original},
   publisher={Princeton University Press, Princeton, NJ},
   date={1999},
   pages={xvi+390},
}

\bib{C3}{article}{
   author={Cibils, Claude},
   title={Hochschild cohomology algebra of radical square zero algebras},
   conference={
      title={Algebras and modules, II},
      address={Geiranger},
      date={1996},
   },
   book={
      series={CMS Conf. Proc.},
      volume={24},
      publisher={Amer. Math. Soc., Providence, RI},
   },
   date={1998},
   pages={93--101},}

\bib{G}{article}{
   author={Gerstenhaber, Murray},
   title={The cohomology structure of an associative ring},
   journal={Ann. of Math. (2)},
   volume={78},
   date={1963},
   pages={267--288},
}
		
\bib{GS}{article}{
   author={Gerstenhaber, Murray},
   author={Schack, Samuel D.},
   title={Algebraic cohomology and deformation theory},
   conference={
      title={Deformation theory of algebras and structures and applications},
      address={Il Ciocco},
      date={1986},
   },
   book={
      series={NATO Adv. Sci. Inst. Ser. C Math. Phys. Sci.},
      volume={247},
      publisher={Kluwer Acad. Publ., Dordrecht},
   },
   date={1988},
   pages={11--264},
}

\bib{Ho}{article}{
   author={Hochschild, G.},
   title={On the cohomology groups of an associative algebra},
   journal={Ann. of Math. (2)},
   volume={46},
   date={1945},
   pages={58--67},
}

\bib{NW}{article}{
   author={Negron, Cris},
   author={Witherspoon, Sarah},
   title={An alternate approach to the Lie bracket on Hochschild cohomology},
   journal={Homology Homotopy Appl.},
   volume={18},
   date={2016},
   number={1},
   pages={265--285},
}

%\bib{RR}{article}{
   % author={Redondo, Mar{\'{\i}}a Julia},
   % author={Rom{\'a}n, Lucrecia},
  %  title={Hochschild cohomology of triangular string algebras and its ring
 %  structure},
 %  journal={J. Pure Appl. Algebra},
 %  volume={218},
%   date={2014},
 %  number={5},
 %  pages={925--936}, }

\bib{RR}{article}{
 author={Redondo, Mar{\'{\i}}a Julia},
 author={Rom{\'a}n, Lucrecia},
  title={Gerstenhaber algebra structure on the Hochschild cohomology of quadratic string algebra},
  journal={Algebr Represent Theor},
  date={2017},
 doi={10.1007/s10468-017-9704-1} }

\bib{SS}{article}{
   author={Schwede, Stefan},
   title={An exact sequence interpretation of the Lie bracket in Hochschild
   cohomology},
   journal={J. Reine Angew. Math.},
   volume={498},
      date={1998},
   pages={153--172},
}

\bib{CS}{article}{
   author={Strametz, Claudia},
   title={The Lie algebra structure on the first Hochschild
cohomology group of a monomial algebra},
   journal={J.  Algebra Appl.},
   volume={5},
   date={2006},
   number={3},
   pages={245--270},
}

\bib{MSA}{article}{
   author={Su\'arez-\'Alvarez, Mariano},
   title={A little bit of extra functoriality for Ext and the computation of
   the Gerstenhaber bracket},
   journal={J. Pure Appl. Algebra},
   volume={221},
   date={2017},
   number={8},
   pages={1981--1998},
}

\end{biblist}
\end{bibdiv}

\end{document}